\numberwithin{equation}{section}
\newtheorem{theorem}{Theorem}[section]
\newtheorem{remark}[theorem]{Remark}
\newtheorem{corollary}[theorem]{Corollary}
\newtheorem{lemma}[theorem]{Lemma}
\newtheorem{proposition}[theorem]{Proposition}
\theoremstyle{nonumberplain}
\newtheorem{proof}{Proof}
\newcommand\mathmidscript[1]{\vcenter{\hbox{$\scriptstyle #1$}}}
\newcommand{\pvint}{\ensuremath{\;\mathmidscript{\raisebox{-0.9ex}{\tiny{p.\,v.\;}}}\!\!\!\!\!\!\!\!\int}}
\DeclareMathAlphabet{\mathpzc}{OT1}{pzc}{m}{it}
\newcommand*\Laplace{\mathop{}\!\mathbin\bigtriangleup}
\newcommand{\NORM}[1]{\left\lVert#1\right\rVert} 
\newcommand{\DEF}{\coloneqq}
\newcommand{\FED}{\eqqcolon}
\newcommand{\RR}{\mathbb{R}}
\newcommand{\CC}{\mathbb{C}}
\newcommand{\NN}{\mathbb{N}}
\newcommand{\Om}{\Omega}
\newcommand{\del}{\partial}
\newcommand{\OO}{\mathcal{O}}
\newcommand{\intd}{\mathrm{d}}
\newcommand{\TransT}{\mathrm{T}}
\def\moverlay{\mathpalette\mov@rlay}
\def\mov@rlay#1#2{\leavevmode\vtop{%
   \baselineskip\z@skip \lineskiplimit-\maxdimen
   \ialign{\hfil$\m@th#1##$\hfil\cr#2\crcr}}}
\newcommand{\charfusion}[3][\mathord]{
    #1{\ifx#1\mathop\vphantom{#2}\fi
        \mathpalette\mov@rlay{#2\cr#3}
      }
    \ifx#1\mathop\expandafter\displaylimits\fi}
\newcommand*\euler{\mathrm{e}}
\newcommand*\imagi{\mathrm{i}\,}
\title{A Steklov-spectral approach for solutions of  Dirichlet and Robin boundary value problems}
\date{}
\author{
Kthim Imeri \thanks{\footnotesize Department of Mathematics, Simon Fraser University, 8888 University Dr, Burnaby, BC V5A 1S6, Canada (kthim\_imeri@sfu.ca).} 
\and Nilima Nigam \thanks{\footnotesize Department of Mathematics, Simon Fraser University, 8888 University Dr, Burnaby, BC V5A 1S6, Canada (nigam@math.sfu.ca).}
}
\begin{document}
	\maketitle

\begin{abstract}
	In this paper we revisit an approach pioneered by Auchmuty \cite{AuchmutyFoundation} to approximate solutions of the Laplace- Robin boundary value problem. We demonstrate the efficacy of this approach on a large class of non-tensorial domains, in contrast with other spectral approaches for such problems.

	We establish a spectral approximation theorem showing an exponential fast numerical evaluation with regards to the number of Steklov eigenfunctions used, for smooth domains and smooth boundary data. A polynomial fast numerical evaluation is observed for either non-smooth domains or non-smooth boundary data. We additionally prove a new result on the regularity of the Steklov eigenfunctions, depending on the regularity of the domain boundary.
	
	We describe three numerical methods  to compute Steklov eigenfunctions.
	
\end{abstract}

\def\keywords2{\vspace{.5em}{\textbf{Mathematics Subject Classification
(MSC 2000).}~\,\relax}}
\def\endkeywords2{\par}
\keywords2{35R30, 35C20.}

\def\keywords{\vspace{.5em}{\textbf{ Keywords.}~\,\relax}}
\def\endkeywords{\par}
\keywords{Steklov eigenvalue problem, boundary integral operators, Dirichlet boundary conditions, Robin boundary conditions, numerical solution to the Laplace problem.}


\section{Introduction}\label{Ch:Introduction}
Spectral methods provide a fast and highly accurate approximation strategy for the solution of partial differential equations (PDE), and rely on the use of an approximation basis consisting of eigenfunctions of the PDE operator under consideration. These eigenfunctions are {\it typically} the Dirichlet or Neumann eigenfunctions of the PDE operator, and provide excellent approximations for homogenous boundary value problems. Non-homogenous Dirichlet and Neumann boundary value problems necessitate the use of additional techniques.

However, the use of spectral methods to solve boundary value problems for the Laplacian with {\it inhomogenous Robin data} has thus far been restricted to tensorial geometries, see e.g. \cite{wang}. Robin boundary value problems provide challenges for approximation via the 'standard' basis functions, since these are not eigenfunctions of the associated solution operator.

An ingenous idea was first proposed by \cite{AuchmutyFoundation}, who demonstrated the theoretical foundations for spectral approximation via {\it Steklov eigenfunctions} of elliptic operators. This idea was then explored in \cite{AuchmutyCho} to approximate the solutions of the Laplacian on a domain $\Om$ with Dirichlet, Neumann and Robin boundary conditions. The Steklov eigenfunctions are denoted in this work as $(s_i)_{i=1}^\infty$, which are harmonic functions, satisfying the Steklov boundary condition, that is
\begin{align*}
	\del_\nu s_i = \lambda s_i \quad\text{on }\del\Om\,,
\end{align*}
where $\lambda\geq 0$ and $\del_\nu$ is the normal derivative on the boundary. In \cite{AuchmutyFoundation} it is shown that there are countable many such functions to countable many $\lambda$, which satisfy the above boundary condition, and the pair $(s_i, \lambda_i)$ is called the $i$-th Steklov eigenfunction and Steklov eigenvalue.

The results in \cite{AuchmutyFoundation} moreover show that the eigenfunctions build a maximal orthonormal set in $L^2(\del\Om)$ for $\Om\subset\RR^n$, $n\geq 2$, and that we can build an isometric isomorphism between $H^{1/2}(\del\Om)$ and a subspace of harmonic function in $H^1(\Om)$. An exceptional behaviour is observed for the Steklov eigenvalue in regards to the regularity of the boundary. For any Lipschitz boundary the eigenvalues ascend linearly, then for smooth boundaries the eigenvalues ascend linearly in pairs with an exponential decrease in the distance within a pair, and for non-smooth boundaries this distance might not decrease exponentially any further. Through \cite{Agranovich2006} we have a general formula for the eigenvalues in any Lipschitz domain, and on that foundation the authors in \cite{CurviLinearDomainEVals} discovered a more accurate formula for curvilinear polygons. 
Similarly, in Proposition \ref{prop:s_i in H^p}  we will show that the regularity of Steklov eigenfunctions depend on the regularity of the boundary. Steklov eigenpairs are also useful in more general partial differential equations, the authors in \cite{AuchmutySchrodinger} considered Schroedinger-type operators and the authors in \cite{oursteklovpaper} consider the Helmholtz equation with mixed boundary boundary conditions. The survey \cite{girouard2017spectral} states more such applications as well as more results on the Steklov problem.

This paper is organized as follows. In Section \ref{sec:Prelim} we first define the Steklov problem in $\RR^2$ and show asymptotics for the eigenvalues, regarding the smoothness of the boundary, and the orthonormality of the eigenfunctions. Subsequently, we consider two higher order layer potentials, that is the single layer potential and Neumann-Poincaré operator, and their mapping properties in $H^p$. Need those for the numerics at the end and the next result, Proposition \ref{prop:s_i in H^p}, which shows higher regularity properties for $s_i$. Thereafter, we examine properties on the Dirichlet to Neumann operator, whose eigenvalues coincide with the Steklov eigenvalue. Then we estimate the norm of the Dirichlet to Neumann operator, in Lemma \ref{lemma:normDN<normg invf}, and with that prove that every smooth function on the boundary with inform $L^2$-bounded derivatives is a finite linear combination of Steklov eigenfunctions, in Proposition \ref{prop:uniformL2boundedFcts}. We conclude Section \ref{sec:Prelim} with the formulation of the series expansion for the solution to the Laplace problem in terms the Steklov eigenpairs, theoretical as well as the numerical approximation.

In Section \ref{sec:main} we show the main results, which concerns the asymptotic approximation of the series expansion in terms if the amount of eigenpairs used, as well as the asymptotics in terms of the numerical approximation of the Steklov eigenfunctions. For the Dirichlet boundary problem this is stated in Theorem \ref{thm:main}, for the Robin boundary problem, the asymptotics are given in Corollary \ref{coro:main}. Thereafter we proof the theorem and the corollary.

In Section \ref{sec:Numerics} we first consider three methods to numerically compute the eigenpairs. The first method is based on the results of \cite{ConfNumImpl}, which provides the eigenvalues through a conformal map. We further develop it to also obtain the eigenfunctions. The second method relies on the weak form of the Steklov problem and uses a particular orthonormal basis to obtain a generalized eigenvalue problem from which we obtain the eigenpair. The third method uses the collocation method elaborated in \cite[Chapter 13]{kress2013linear}. Method three is then used in the upcoming subsection, which is about numerical tests to validate Theorem \ref{thm:main} and Corollary \ref{coro:main}. We apply several different domains and several boundary data with various regularity and visually present the results obtained in MATLAB \cite{MATLAB}.

\section{Preliminaries}\label{sec:Prelim}

Let $\Omega\subset\RR^2$ be a simply connected, bounded and open domain with a Lipschitz boundary. More restrictive boundary conditions may be given later on. Then we define $u$ to be the solution to the Helmholtz equation, that is
\begin{align}\label{PDE:Helmholtz}
	\Laplace u(x) = 0 \quad \text{in } \Om\,, 
\end{align}
with the 3 separate boundary conditions
\begin{align}
	u(y)  &= g(y)  \,,\label{PDE:Helmholtz:Dir}\\
	\del_{\nu_y} u(y)  &= g (y)  \,,\label{PDE:Helmholtz:Neu}\\
	\del_{\nu_y} u(y)+b\, u(y)  &= g (y)  \,,\label{PDE:Helmholtz:Rob}
\end{align}
where $g\in L^2(\del \Om)$, $b>0$, and $\del_{\nu_y}$ denotes the outside normal derivative on $\del \Omega$ at $y$. The first condition is known as the Dirichlet condition, the second condition is known as the Neumann condition and third one is known as the Robin boundary condition.  

We define the Steklov eigenvalues $\lambda_0,\lambda_1,\ldots $ and respective Steklov eigenfunction $s_0, s_1,\ldots$ as the solution to 
\begin{align}\label{PDE:Steklov}
	\left\{ 
	\begin{aligned}
		 \Laplace  \, s_j(x) &= 0 \quad &&\text{in } \Om\,, \\
		 \del_{\nu_y} s_j(y) &= \lambda_j \, s_j(y) \quad &&\text{on } \del \Omega \,,
	\end{aligned}
	\right.
\end{align}
for $j\in\NN_0$. Equation (\ref{PDE:Steklov}) has a non-trivial solution $s_i\in H^1(\Om)$ to a corresponding eigenvalue $\lambda_i$. From \cite[Corollary 4.3]{Daners2009} we have that $s_i\in C^\infty(\Om)\cap C^0(\overline{\Om})$. From \cite[Section 6 and 7]{AuchmutyFoundation} we have that all $\lambda_i$ have finite multiplicity. We also know that $\lambda_0 = 0$ and that $s_0$ is constant. For smooth enough boundaries the authors in \cite{StekSpectrumAnalysisGirouard} show that
\begin{align}\label{equ:lambdaAsymp}
	\lambda_{2i} = \lambda_{2i-1}+\OO(i^{-\infty}) = \frac{2\pi}{|\del\Om|}\,i+\OO(i^{-\infty})\,,
\end{align}
for $i\geq 1$, where $\OO(i^{-\infty})$ decays faster than any power of $i^{-1}$ for $i\rightarrow \infty$, where $|\del\Om|$ denotes the length of the boundary. From \cite{Agranovich2006} we further know that for piecewise $C^1$ boundaries we have that
\begin{align}\label{equ:lambdaAsymp-C1}
	\lambda_{i} = \Big(\frac{\pi}{|\del\Om|}+ o(1) \Big)\,i\,.
\end{align}
For curvilinear domains the authors in \cite{CurviLinearDomainEVals} are able to construct values closer to the eigenvalues than what is given with Equation (\ref{equ:lambdaAsymp-C1}).

We define the inner-product $\langle \cdot\,,\cdot\rangle_\del$ through
\begin{align*}
	\langle v,w\rangle_\del \DEF \int_{\Om}\nabla v\cdot\nabla w\;+\,\int_{\del\Om} v\, w\,,
\end{align*}
for $v,w\in H^1(\Om)$, where we use the trace theorem. Then we have that the norm $\NORM{v}_\del\DEF \sqrt{\langle v,v\rangle_\del}$ is equivalent to the $H^1$ norm, see \cite[Corollary 6.2]{AuchmutyFoundation}. The authors showed this by stating that 
\begin{align*}
	 \int_{\Om}(\nabla v)^2\;+\,\int_{\del\Om} v^2 \geq \alpha_0 \int_\Om v^2\,, 
\end{align*}
for some $\alpha_0>0$, thus $||v_m||^2_{H^1(\Omega)}\leq \frac{1}{\alpha_0}||v||^2_\del+\int_\Om(\nabla v)^2\leq C_H ||v||^2_\del$ . They proved it using a proof by contradiction and applying a minimizing sequence $(v_m)_{m\in\NN}$, with $||v_m||_{L^2(\Omega)}=1$, which has a subsequence converging to a limit functions $\hat{v}$ with $||\hat{v}||_\del=0$. But if  $||\hat{v}||_\del=0$ then $\hat{v}$ is constant, because $\int_{\Om}(\nabla \hat{v})^2 = 0$ and $\int_{\del\Om}\hat{v}^2=0$, thus $||\hat{v}_m||_{L^2(\Omega)}=1$, which is a contradiction.

We then confine Steklov eigenfunctions to satisfy the normalisation
\begin{align*}
	\NORM{s_i}_\del =1\,,
\end{align*}
this determines Steklov eigenfunction to simple Steklov eigenvalues up to a sign. Using the Gram-Schmidt orthogonalisation with the inner-product $\langle \cdot\,,\cdot\rangle_\del$ on Steklov eigenfunctions to eigenvalues with non simple multiplicity, we can set all Steklov eigenfunctions to be orthonormal to each other within the inner-product $\langle \cdot\,,\cdot\rangle_\del$, because 
\begin{align*}
	\int_{\Om}\nabla s_i\cdot\nabla s_j 
		&= \lambda_j\int_{\del\Om} s_i\, s_j = \lambda_i\int_{\del\Om} s_i\, s_j = 0\,,\quad && \text{for } i\neq j\,,
\end{align*}
due to Green's identity. We also have that		
\begin{align*}
	\langle s_i\,, s_i\rangle_\del
		&=(1+\lambda_i)\NORM{s_i}_{L^2(\del\Om)}^2 =1\,,\quad && \text{for } i\in\NN_0\,,\\
	\int_{\Om}\nabla s_i\cdot\nabla s_i 
		&= \lambda_i\int_{\del\Om}(s_i)^2 = \frac{\lambda_i}{1+\lambda_i}\,,\quad && \text{for } i\in\NN_0\,, \\
	s_0
		&\equiv \frac{1}{\sqrt{|\del\Om|}}\,.&
\end{align*}


\subsection{Higher Order Layer Potentials}\label{sec:HighOrderLayer}
Given a Lipschitz boundary $\del\Om$, we define the single layer potential $\mathcal{S}: H^{-1/2}(\del\Om)\rightarrow H^{1/2}(\del\Om)$ and the Neumann-Poincaré operator $\mathcal{K}^\ast: H^{-1/2}(\del\Om)\rightarrow H^{-1/2}(\del\Om)$ through
\begin{align*}
	\mathcal{S}[\phi](\tau) &= \int_{-\pi}^\pi \frac{1}{2\pi} \log{|x(\tau)-x(t)|}\;\phi(t)\,|T(t)|\,\intd t\,,\\
	\mathcal{K}^\ast[\phi](\tau) &= \pvint_{-\pi}^\pi \frac{1}{2\pi} \frac{\nu_{x(\tau)}\cdot(x(\tau)-x(t))}{|x(\tau)-x(t)|^2}\phi(t)\,|T(t)|\,\intd t\,,
\end{align*}
where $x(t)\in\del\Om$ parametrizes the boundary of $\Om$ with $|T(t)|\DEF |\tfrac{\intd}{\intd t}{x}(t)|>0$, for all $t\in (-\pi,\pi]$, and where the p.v. denotes a principle value integral. We refer to \cite{VerchotaSingleLayer, WeiLiNPOp} for details. 

The authors in \cite[Theorem 1.1]{HighRegLayerPotMazya} showed that the operators $-\tfrac{1}{2}\mathrm{I}+\mathcal{K}^\ast: H^{l}(\del\Om)\rightarrow H^{l}(\del\Om)$ and  $\mathcal{S}: H^{l-1}(\del\Om)\rightarrow H^{l}(\del\Om)$ are continuous and invertible excluding an one dimensional subspace, provided $2(l-1)>n-1$, for $l>1$, where $n$ denotes the dimension of space, that is $\RR^n$, with $n\geq 3$, and provided $\del\Om\in H^l_{\text{loc}}$. 
For $\RR^2$, they state that: "The changes required in formulations, $n > 2$, are the same as in the logarithm potential theory for contours. Our proofs, given for $n > 2$, apply to the two dimensional case after minor changes." (\cite[pp. 100]{HighRegLayerPotMazya}). Here we use the logarithm potential theory described in \cite[Section 4.]{VerchotaSingleLayer}.
\cite[Theorem 1.1]{HighRegLayerPotMazya} also provides results on the the operators $\mathcal{S}$ and $\tfrac{1}{2}\mathrm{I}+\mathcal{K}^\ast$ for values $1<l\leq (n-1)/2+1$, but with an additional condition on the boundary. That condition is satisfied with the assumption, that $\del\Om\in H^{l+1/2}_{\text{loc}}$, for all $l > 1$. Together with the result in \cite[Theorem 4.11]{VerchotaSingleLayer}, we then have the following lemma.
\begin{lemma} \label{lemma:HighRegOps}
	Let $f_0$ be the unique function satisfying $(-\tfrac{1}{2}\mathrm{I}+\mathcal{K}^\ast)f_0 = 0$ and $\int_{\del\Om}f_0 = 1$. Let $\langle f_0\rangle$ be the space spanned by $f_0$ and let $\langle 1\rangle$ be the space spanned by constant functions. 
	
	Assume $\del\Om\in H^{p+1/2}_{\text{loc}}$, $p\in\NN$ with $p\geq 1$, then the following statements hold:
	\begin{itemize}
		\item[(1)] $\mathcal{S}$ maps $H^{p-1}(\del\Om)\ominus \langle f_0\rangle$ isomorphically onto $H^{p}(\del\Om)\ominus\langle 1\rangle$, and $\mathcal{S}[f_0]$ is a constant function.
		\item[(2)] $(-\tfrac{1}{2}\mathrm{I}+\mathcal{K}^\ast)$ is an isomorphism in $H^{p}(\del\Om)\ominus\langle 1\rangle$.
	\end{itemize}
\end{lemma}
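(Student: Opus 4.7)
The plan is to assemble the lemma as a direct consequence of the two references invoked immediately above its statement, namely \cite[Theorem 1.1]{HighRegLayerPotMazya} for the higher regularity range and \cite[Theorem 4.11]{VerchotaSingleLayer} for the base case. I would split the argument according to the value of $p$.

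For $p=1$, the hypothesis $\del\Om\in H^{3/2}_{\text{loc}}$ is stricter than Lipschitz, so Verchota's theorem applies verbatim and yields both (1) and (2) at the $L^2$/$H^{\pm 1/2}$ level. The unique density $f_0$ characterized by $(-\tfrac{1}{2}\mathrm{I}+\mathcal{K}^\ast)f_0=0$ and $\int_{\del\Om}f_0=1$ is precisely the logarithmic equilibrium density in $\RR^2$, and the identity $\mathcal{S}[f_0]\equiv\text{const.}$ is a standard equivalent characterization; this can also be re-derived by noting that $\mathcal{S}[f_0]$ is harmonic in $\Om$ with vanishing interior normal derivative (by the jump relations and $(-\tfrac{1}{2}\mathrm{I}+\mathcal{K}^\ast)f_0=0$), hence constant on $\Om$, and then extending continuously to $\del\Om$.

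For $p\geq 2$, Maz'ya--Shaposhnikova's Theorem 1.1 supplies continuity and Fredholm invertibility in $\RR^n$, $n\geq 3$, modulo the one-dimensional obstructions. The authors of the present paper have explicitly recorded the remark that the proof transfers to $n=2$ with the logarithmic bookkeeping from \cite[Section 4]{VerchotaSingleLayer}. Taking this as given, I would proceed in three steps: first, invoke the Maz'ya estimates to get bounded mappings $\mathcal{S}\colon H^{p-1}(\del\Om)\to H^{p}(\del\Om)$ and $-\tfrac{1}{2}\mathrm{I}+\mathcal{K}^\ast\colon H^{p}(\del\Om)\to H^{p}(\del\Om)$; second, transport the kernel/cokernel identifications $\langle f_0\rangle$ and $\langle 1\rangle$ from the Lipschitz (Fredholm) theory, noting that kernels of these compact-perturbation operators are stable under boundary smoothing; third, bootstrap regularity: if $\phi\in H^{-1/2}\ominus\langle f_0\rangle$ solves $\mathcal{S}\phi=g$ with $g\in H^{p}\ominus\langle 1\rangle$, combine the Lipschitz isomorphism (existence of $\phi$) with the Maz'ya mapping estimate (the smoothing property of $\mathcal{S}$ with a boundary of class $H^{p+1/2}_{\text{loc}}$) to conclude $\phi\in H^{p-1}\ominus\langle f_0\rangle$; an analogous bootstrap handles $-\tfrac{1}{2}\mathrm{I}+\mathcal{K}^\ast$. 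Injectivity and surjectivity on the quotient spaces then give the stated isomorphisms.

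The principal obstacle is the transfer of Maz'ya--Shaposhnikova's theorem from $n\geq 3$ to $n=2$, since in the planar case the logarithmic fundamental solution produces exactly the one-dimensional obstructions $\langle f_0\rangle$ and $\langle 1\rangle$ that appear in the quotients of the lemma. I would not rederive this adaptation but rely on the explicit assertion in \cite[pp.~100]{HighRegLayerPotMazya} combined with Verchota's planar logarithmic framework, as the present paper already does in the paragraph preceding the lemma statement.
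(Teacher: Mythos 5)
Your proposal is correct and follows essentially the same route as the paper, which itself gives no separate proof of this lemma but derives it exactly as you do: Verchota's planar logarithmic-potential results for the base case, Maz'ya--Shaposhnikova's Theorem 1.1 together with their remark on the two-dimensional adaptation for the higher-regularity range, and the identification of the one-dimensional obstructions $\langle f_0\rangle$ and $\langle 1\rangle$. Your added detail on the bootstrap and on $\mathcal{S}[f_0]$ being constant only makes explicit what the paper leaves implicit in the paragraph preceding the lemma.
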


\subsection{Higher Order Steklov Eigenfunctions}\label{sec:HOSE}

\begin{proposition}\label{prop:s_i in H^p}
	Assume $\del\Om\in H^{p+1/2}_{\text{loc}}$, $p\in\NN$ with $p\geq 2$, then 
	\begin{align*}
		s_i\in H^p(\del\Om)\,,\quad \forall \, i\in\NN\,.
	\end{align*}
	Furthermore, given that $s_i=\mathcal{S}[\phi_i]$, then $\phi_i\in H^p(\del\Om)$.
\end{proposition}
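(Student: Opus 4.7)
The plan is to represent each Steklov eigenfunction as a single layer potential with density $\phi_i$, to convert the Steklov boundary condition into a boundary integral equation involving $\mathcal{S}$ and $\mathcal{K}^\ast$, and then to bootstrap the regularity of $\phi_i$ via repeated application of Lemma \ref{lemma:HighRegOps}.

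For $i=0$ the statement is trivial since $s_0$ is constant; fix $i\geq 1$. The orthogonality identity at the end of the preceding subsection gives $\int_{\del\Om}s_i s_0\,\intd\sigma = 0$, so $s_i|_{\del\Om}$ lies in $H^{1/2}(\del\Om)\ominus\langle 1\rangle$. The classical Verchota isomorphism $\mathcal{S}:H^{-1/2}(\del\Om)\ominus\langle f_0\rangle\to H^{1/2}(\del\Om)\ominus\langle 1\rangle$ on Lipschitz boundaries furnishes a unique $\phi_i\in H^{-1/2}(\del\Om)\ominus\langle f_0\rangle$ with $\mathcal{S}[\phi_i]=s_i$ on $\del\Om$, and uniqueness of the interior Dirichlet problem extends this identity throughout $\overline{\Om}$. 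Combining the standard jump relation $\del_\nu^-\mathcal{S}[\phi_i]=(-\tfrac{1}{2}\rmI+\mathcal{K}^\ast)[\phi_i]$ with the Steklov condition $\del_\nu s_i=\lambda_i s_i$ yields the key integral equation
\begin{align*}
    \bigl(-\tfrac{1}{2}\rmI + \mathcal{K}^\ast\bigr)[\phi_i] \;=\; \lambda_i\, \mathcal{S}[\phi_i]\quad\text{on }\del\Om.
\end{align*}

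Next I bootstrap. I first bridge to $L^2$: since $\mathcal{S}[\phi_i]=s_i\in H^{1/2}\subset L^2$, the right-hand side above lies in $L^2$, and the $L^2$-Fredholm theory of Verchota for $(-\tfrac{1}{2}\rmI+\mathcal{K}^\ast)$ on Lipschitz boundaries (amply justified under $\del\Om\in H^{p+1/2}_{\mathrm{loc}}$ with $p\geq 2$) forces $\phi_i\in L^2(\del\Om)$. Thereafter I induct on $q=1,\ldots,p$: assuming $\phi_i\in H^{q-1}(\del\Om)$, Lemma \ref{lemma:HighRegOps}(1) lifts $\mathcal{S}[\phi_i]$ to $H^{q}(\del\Om)\ominus\langle 1\rangle$, so the integral equation places $(-\tfrac{1}{2}\rmI+\mathcal{K}^\ast)[\phi_i]$ in $H^{q}(\del\Om)\ominus\langle 1\rangle$, and Lemma \ref{lemma:HighRegOps}(2) inverts to yield $\phi_i\in H^{q}(\del\Om)$. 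Terminating at $q=p$ gives $\phi_i\in H^p(\del\Om)$; a final application of Lemma \ref{lemma:HighRegOps}(1) with parameter $p$ to $s_i=\mathcal{S}[\phi_i]$ (using $\phi_i\in H^{p-1}$) delivers $s_i\in H^p(\del\Om)$.

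The principal obstacle is the base step bridging from the natural Lipschitz-level regularity $\phi_i\in H^{-1/2}$ up to $\phi_i\in L^2$, since Lemma \ref{lemma:HighRegOps} as stated only covers $p\geq 1$; this must be handled separately via the $L^2$-Fredholm theory of Verchota. Once in $L^2$, the induction is mechanical, and each step $q$ requires $\del\Om\in H^{q+1/2}_{\mathrm{loc}}$, the final step $q=p$ being the binding constraint that exactly matches the hypothesis.
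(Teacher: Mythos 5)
Your proposal is correct and follows essentially the same route as the paper: represent $s_i=\mathcal{S}[\phi_i]$, use the jump relation to obtain $(-\tfrac{1}{2}\mathrm{I}+\mathcal{K}^\ast)[\phi_i]=\lambda_i\,\mathcal{S}[\phi_i]$, and bootstrap the regularity of $\phi_i$ with Lemma \ref{lemma:HighRegOps}. The only (harmless) variations are that the paper obtains $\phi_i\in L^2(\del\Om)$ directly from Verchota's $L^2$-invertibility of $\mathcal{S}$ rather than bridging from $H^{-1/2}$, and in the bootstrap it rewrites the equation as $-\tfrac{1}{2}\phi_i=\lambda_i\mathcal{S}[\phi_i]-\mathcal{K}^\ast[\phi_i]$ and uses the smoothness of the kernel of $\mathcal{K}^\ast$ instead of inverting $-\tfrac{1}{2}\mathrm{I}+\mathcal{K}^\ast$.
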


\begin{proof}
	From \cite[Theorem 4.1]{VerchotaSingleLayer} we can readily infer that there exists a $\phi_i\in L^2(\del\Om)$ such that $s_i=\mathcal{S}[\phi_i]$, since $\int_{\del\Om}s_i=0$, for all $i\geq 1$, for Lipschitz domains. Using the boundary condition for $s_i$ and limit results for the single layer potential, especially that $\lim_{x\rightarrow\del\Om} \del_\nu \mathcal{S}[\phi](x) = (-\tfrac{1}{2}\mathrm{I}+\mathcal{K}^\ast)[\phi](x)$, we obtain that
	\begin{align*}
		(-\tfrac{1}{2}\mathrm{I}+\mathcal{K}^\ast)[\phi_i] = \lambda_i\,\mathcal{S}[\phi_i]\,,
	\end{align*}
	that is $-\tfrac{1}{2}\phi_i = \lambda_i\,\mathcal{S}[\phi_i] - \mathcal{K}^\ast[\phi_i]$. We can infer that the right-hand side is in $H^1(\del\Om)$, because of Lemma \ref{lemma:HighRegOps} and because for $C^2$ domains the integration kernel of $\mathcal{K}^\ast$ is smooth, thus $\phi_i\in H^1(\del\Om)$. Reapplying this argument, we see that the regularity of $\phi_i$ is as high as Lemma \ref{lemma:HighRegOps} allows, and that is $\phi_i\in H^p(\del\Om)$ and thus $s_i\in H^p(\del\Om)$.
\end{proof}

\subsection{Dirichlet to Neumann operator}

Let $\Om$ be a Lipschitz domain, then we define the Dirichlet to Neumann map $\mathcal{DN}: H^{\nicefrac{1}{2}}(\del\Om)\rightarrow H^{-\nicefrac{1}{2}}(\del\Om)$ such that $\mathcal{DN}[g] = \del_\nu u$ where $u$ is the solution to the homogeneous Dirichlet problem, that is
\begin{align}\label{PDE:Laplace}
	\left\{ 
	\begin{aligned}
		 \Laplace  \, u(x) &= 0 \quad &&\text{in } \Om\,, \\
		 u(y) &= g(y) \quad &&\text{on } \del \Omega \,.
	\end{aligned}
	\right.
\end{align}
According to \cite{DtNBehrndt}, $\mathcal{DN}$ is well-defined. From \cite[Comment after Proposition 1.8]{taylor2010partial}, for $\del\Om\in C^\infty$ and $p\geq 1$, we have that the solution $u$ to Equation (\ref{PDE:Laplace}) is in $H^{p+1}(\Om)$ in case $g\in H^{p+\tfrac{1}{2}}(\del\Om)$. Then with the trace theorem \cite{traceThmZhonghai} we have that $\nabla u|_{\del\Om}\in H^{p-\tfrac{1}{2}}(\del\Om)\subset C^{p-1}(\del\Om)$ using the Sobolev embedding and that $\del\Om$ is a one dimensional manifold. Hence we can infer that $\mathcal{DN}$ maps a function in $H^{p+\tfrac{1}{2}}(\del\Om)$ to a function in $H^{p-\tfrac{1}{2}}(\del\Om)$.

Considering that we can reformulate the Dirichlet to Neumann operator through
\begin{align*}
	\mathcal{DN} = (-\tfrac{1}{2}\mathrm{I}+\mathcal{K}^\ast)\,\mathcal{S}^{-1}\,,
\end{align*}
where we use the representation $u = \mathcal{S}[\phi]$, and several layer potential results \cite{LPTSA}, Lemma \ref{lemma:HighRegOps}, especially that $\mathcal{S}[f_0]$ is a constant function and $(-\tfrac{1}{2}\mathrm{I}+\mathcal{K}^\ast)[f_0]=0$, we see that $\mathcal{DN}: H^{l}(\del\Om)\rightarrow H^{l-1}(\del\Om)$ is well defined for $\del\Om\in H^{l+1/2}_{\text{loc}}$ with $l \geq 1$ or $l=1/2$.


Then we define
\begin{align*}
	\mathcal{DN}^p[g] =  \mathcal{DN}[\mathcal{DN}[\ldots\mathcal{DN}[g]\ldots]]\,,
\end{align*}
$p$ times, where $p\in\NN$. 

\begin{lemma} \label{lemma:DN estimate}
	Assume $\del\Om\in H^{p+1/2}_{\text{loc}}$, $p\in\NN$ with $p\geq 1$, then for all $g\in H^{p}(\del\Om)$ and all $i\in\NN$ we have that
	\begin{align*}
		\int_{\del\Om}g\,s_i\,\leq\,\NORM{\mathcal{DN}^p[g]}_{L^2(\del\Om)}\frac{1}{\lambda_i^{p}(1+\lambda_i)^{1/2}}\,.
	\end{align*}
\end{lemma}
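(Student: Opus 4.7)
The plan is to exploit the fact that $s_i$ is an eigenfunction of the Dirichlet-to-Neumann operator. Since $s_i$ is harmonic in $\Om$ with boundary trace $s_i|_{\del\Om}$, the Steklov boundary condition $\del_\nu s_i = \lambda_i s_i$ reads exactly as $\mathcal{DN}[s_i] = \lambda_i s_i$ on $\del\Om$. I would begin the proof by recording this observation.

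Next, I would use the standard symmetry of $\mathcal{DN}$ in the $L^2(\del\Om)$ pairing: if $u,v\in H^1(\Om)$ are the harmonic extensions of boundary data $g,h$, then Green's first identity gives
\begin{align*}
	\int_{\del\Om}\mathcal{DN}[g]\,h = \int_{\Om}\nabla u\cdot\nabla v = \int_{\del\Om}g\,\mathcal{DN}[h]\,.
\end{align*}
Applying this with $h=s_i$, and using $\mathcal{DN}[s_i]=\lambda_i s_i$, yields
\begin{align*}
	\int_{\del\Om}\mathcal{DN}[g]\,s_i \;=\; \lambda_i\int_{\del\Om}g\,s_i\,.
\end{align*}
Iterating this identity $p$ times — which is the step that actually uses the hypothesis $\del\Om\in H^{p+1/2}_{\text{loc}}$, since by the discussion preceding the lemma $\mathcal{DN}$ maps $H^l(\del\Om)\to H^{l-1}(\del\Om)$ in this regularity class so that $\mathcal{DN}^j[g]$ is well-defined in $L^2(\del\Om)$ for $j=1,\dots,p$ whenever $g\in H^p(\del\Om)$ — gives
\begin{align*}
	\int_{\del\Om}\mathcal{DN}^p[g]\,s_i \;=\; \lambda_i^{\,p}\int_{\del\Om}g\,s_i\,.
\end{align*}

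From here the estimate is essentially Cauchy--Schwarz on $\del\Om$. Dividing by $\lambda_i^p$ (valid for $i\geq 1$, where $\lambda_i>0$) and bounding the resulting integral gives
\begin{align*}
	\Big|\int_{\del\Om}g\,s_i\Big| \;\leq\; \frac{1}{\lambda_i^{\,p}}\,\NORM{\mathcal{DN}^p[g]}_{L^2(\del\Om)}\,\NORM{s_i}_{L^2(\del\Om)}\,.
\end{align*}
Finally I would invoke the normalisation identities recorded just before the subsection on higher order layer potentials, namely $\NORM{s_i}_\del^2 = (1+\lambda_i)\NORM{s_i}_{L^2(\del\Om)}^2 = 1$, to conclude $\NORM{s_i}_{L^2(\del\Om)} = (1+\lambda_i)^{-1/2}$, which produces the stated bound.

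The main obstacle is really a bookkeeping one: justifying that the Green's-identity symmetry can be iterated $p$ times in the $L^2$ pairing. The continuity of $\mathcal{DN}$ on the scale $H^{l}\to H^{l-1}$ under the assumed boundary regularity handles this, so that at each iteration $\mathcal{DN}^{j}[g]\in H^{p-j}(\del\Om)$ remains an admissible test function (in particular lies in $H^{1/2}(\del\Om)$ for $j\leq p-1$, allowing the pairing against $s_i\in H^p(\del\Om)$ by Proposition~\ref{prop:s_i in H^p}); no analytical surprises arise beyond this regularity tracking.
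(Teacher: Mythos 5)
Your proposal is correct and follows essentially the same route as the paper's proof: the Green's-identity symmetry of $\mathcal{DN}$ paired against the eigenfunction relation $\del_\nu s_i=\lambda_i s_i$, iterated $p$ times, followed by Cauchy--Schwarz and the normalisation $\NORM{s_i}_{L^2(\del\Om)}=(1+\lambda_i)^{-1/2}$. The only difference is that you make the regularity bookkeeping for the iteration explicit, which the paper leaves implicit.
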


\begin{proof}
	With Green's identity we readily see that
	\begin{align*}
		\int_{\del\Om}g\,s_i 
			= \int_{\del\Om} g\,\frac{1}{\lambda_i}\del_\nu s_i 
			= \int_{\del\Om}\del_\nu u\,\frac{1}{\lambda_i} s_i 
			= \frac{1}{\lambda_i}\int_{\del\Om}\mathcal{DN}^1[g]\, s_i \,.
	\end{align*}
	Repeat the process $p$ times and then we have that
	\begin{align*}
		\int_{\del\Om}g\,s_i 
			= \frac{1}{\lambda_i^p}\int_{\del\Om}\mathcal{DN}^p[g]\, s_i \,.
	\end{align*}
	Using the Cauchy-Schwarz inequality and that $\NORM{s_i}_{L^2(\del\Om)}=(1+\lambda_i)^{-1/2}$, we can infer Lemma \ref{lemma:DN estimate}.
\end{proof}

\begin{lemma}\label{lemma:normDN<normg invf}
	Let $\del\Om\in H^{p+1/2}_{\text{loc}}$, let $g\in H^p(\del\Om)$ be a function on the boundary and let $p\geq 1$ be an integer. Then
	\begin{align*}
		\NORM{\mathcal{DN}^p[g]}_{L^2(\del\Om)}\leq \NORM{g^{(p)}}_{L^2(\del\Om)}\Big(\max_{z\in\del\mathbb{D}}|f'(z)|^{-1}\Big)^p\,,
	\end{align*}
	where $\mathbb{D}$ denotes the open unit disk, where $f:\mathbb{D}\rightarrow\Om\subset\CC$ is a conformal map, and where $g^{(p)}$ the $p$-th weak derivative of $g$.
\end{lemma}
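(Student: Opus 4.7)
The plan is to pass to the unit disk via the conformal map $f$, on which the Dirichlet-to-Neumann operator has an explicit Fourier description as $|D_\theta|$.

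First, I would pull back: set $\tilde u=u\circ f$ and $\tilde g=g\circ f$. Conformal invariance of harmonicity gives that $\tilde u$ solves the Dirichlet problem in $\mathbb{D}$ with boundary data $\tilde g$. A conformal map preserves orthogonality but rescales infinitesimal lengths by $|f'|$, which yields the transformation rule $\mathcal{DN}[g]\circ f=\tfrac{1}{|f'|}\mathcal{DN}_{\mathbb{D}}[\tilde g]$ on $\del\mathbb{D}$. Here $\mathcal{DN}_{\mathbb{D}}$ denotes the Dirichlet-to-Neumann operator on the disk. The assumption $\del\Om\in H^{p+1/2}_{\mathrm{loc}}$ guarantees sufficient boundary regularity for $f'$ and all iterates of $\tilde g$ to live in the appropriate Sobolev spaces.

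Second, I would iterate. Defining the pulled-back operator $\mathcal T[\tilde h]:=\mathcal{DN}_{\mathbb{D}}[\tilde h]/|f'|$, a straightforward induction shows $\mathcal{DN}^p[g]\circ f=\mathcal T^p[\tilde g]$. The key estimate is now entirely on $\del\mathbb{D}$. Since $\mathcal{DN}_{\mathbb{D}}$ is the Fourier multiplier $|D_\theta|$, Parseval's theorem gives the $L^2$-identity $\NORM{\mathcal{DN}_{\mathbb{D}}[\tilde h]}_{L^2(\del\mathbb{D},d\theta)}=\NORM{\del_\theta\tilde h}_{L^2(\del\mathbb{D},d\theta)}$. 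An induction on $p$ then produces
\[
  \NORM{\mathcal T^p[\tilde g]}_{L^2(\del\mathbb{D},d\theta)} \;\leq\; \Big(\max_{z\in\del\mathbb{D}}|f'(z)|^{-1}\Big)^{p}\,\NORM{\del_\theta^p\tilde g}_{L^2(\del\mathbb{D},d\theta)}\,,
\]
where each iteration contributes a single factor $\max|f'|^{-1}$ pulled from the outer multiplication operator.

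Finally, I would convert from $L^2(\del\mathbb{D},d\theta)$ to $L^2(\del\Om)$ using the change of variables $ds=|f'|\,d\theta$, absorbing one factor of $|f'|$ on each side of the inequality, and identifying $g^{(p)}$ on $\del\Om$ with the pulled-back derivative $\del_\theta^p\tilde g$ via the conformal parameterization. This recovers the stated inequality. The main obstacle is the inductive step: since multiplication by $1/|f'|$ does not commute with $\mathcal{DN}_{\mathbb{D}}$, a naive expansion of $\mathcal T^p$ via Leibniz generates commutator terms carrying derivatives of $|f'|^{-1}$ which are absent from the final bound. I would therefore organize the induction so that at each stage a single $\max|f'|^{-1}$ is extracted, then $\mathcal{DN}_{\mathbb{D}}$ is replaced by $\del_\theta$ through the $L^2$-isometry, and the remaining iterate is reduced to $\mathcal T^{p-1}$ acting on $\del_\theta\tilde g$ rather than expanded into Leibniz commutators.
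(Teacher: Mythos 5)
Your overall strategy is the same as the paper's: transplant the problem to the unit disk by the conformal map, use the explicit Fourier description of the Dirichlet-to-Neumann operator there (Parseval giving $\NORM{\mathcal{DN}_{\mathbb{D}}[\tilde h]}_{L^2}=\NORM{\del_\theta \tilde h}_{L^2}$), and extract one factor of $\max|f'|^{-1}$ per application. The transformation rule $\mathcal{DN}[g]\circ f=|f'|^{-1}\mathcal{DN}_{\mathbb{D}}[g\circ f]$ and the identity $\mathcal{DN}^p[g]\circ f=\mathcal{T}^p[\tilde g]$ are correct and are in fact a cleaner formalization of what the paper does.

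The genuine problem is the inductive step, which you correctly identify but do not actually resolve. Your proposed fix — ``replace $\mathcal{DN}_{\mathbb{D}}$ by $\del_\theta$ through the $L^2$-isometry and reduce the remaining iterate to $\mathcal{T}^{p-1}$ acting on $\del_\theta\tilde g$'' — is not legitimate. Writing $\mathcal{DN}_{\mathbb{D}}=H\del_\theta$ with $H$ the (unitary, on mean-zero functions) conjugation operator, the equality $\NORM{\mathcal{DN}_{\mathbb{D}}[\tilde h]}_{L^2}=\NORM{\del_\theta\tilde h}_{L^2}$ holds only when these are the \emph{outermost} operators: if $C$ is any further operator that is not a multiple of an isometry, $\NORM{C\,\mathcal{DN}_{\mathbb{D}}[\tilde h]}_{L^2}$ and $\NORM{C\,\del_\theta\tilde h}_{L^2}$ need not be comparable. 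Here $C=\mathcal{T}^{p-1}\circ M_{1/|f'|}$ contains the multiplication operator $M_{1/|f'|}$, which does not commute with $H$ or with $\del_\theta$, so the substitution inside the composition is exactly the step that generates the Leibniz/commutator terms carrying $\del_\theta(|f'|^{-1})$ that you say you will avoid; you have not shown how to avoid them, only asserted an ordering of the estimates that presupposes the commutation. (For what it is worth, the paper's own proof is equally terse here — it proves the one-step bound and then says ``applying the equation $p$ times'' — so you are reproducing its gap rather than introducing a new one.) A second, smaller piece of unfinished bookkeeping is the passage between $L^2(\del\Om,\mathrm{d}s)$ and $L^2(\del\mathbb{D},\mathrm{d}\theta)$ and between $g^{(p)}$ (arc-length derivative) and $\del_\theta^p\tilde g$: since $\mathrm{d}s=|f'|\,\mathrm{d}\theta$ and $\del_\theta\tilde g=|f'|\,(g'\circ f)$, each conversion introduces powers of $|f'|$ that must be tracked explicitly rather than ``absorbed,'' as they do not all cancel against the $\max|f'|^{-1}$ factors for $p\geq 2$.
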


\begin{proof}
	First, let us assume that $\Om=\mathbb{D}$. Then the solution $u:\Om\rightarrow\RR$ to the Laplace problem with boundary $u\!\mid_{\del\Om}=g$ can be described as
	\begin{align*}
		u(x) = \sum_{n=0}^\infty g_n\,r^n\,\euler^{\imagi n\,\theta}\,,
	\end{align*}
	where $x\in\Om$ and $r,\theta$ denotes its radial and angular part, and where $(g_n)_{n=0}^\infty\subset\CC$. By the chain rule, we have that $\del_\nu=\del_r$ and that the tangential derivative is given through $\tfrac{1}{r}\del_\theta$, which in turn is equal to $\tfrac{1}{r} g'$. Then we can infer that
	\begin{align*}
		\mathcal{DN}^p[g] 
			&=  \sum_{n=p}^\infty g_n\,\frac{n!}{(n-p)!}\, r^{n-p}\,\euler^{\imagi n\, \theta}\,.
	\end{align*}
	Using Parseval's theorem, we can say that 
	\begin{align*}
		\NORM{\mathcal{DN}^p[g]}_{L^2(\del\Om)}^2
			=  \sum_{n=p}^\infty |g_n|^2\,\frac{n!^2}{(n-p)!^2}
			\leq  \sum_{n=0}^\infty |g_n|^2\,(n^{p})^2
			= \NORM{\del_\theta^p u}_{L^2(\del\Om)}^2\,,
	\end{align*}
	and thus 
	\begin{align*}
		\NORM{\mathcal{DN}^p[g]}_{L^2(\del\Om)}
			\leq \NORM{g^{p}}_{L^2(\del\Om)}\,.
	\end{align*}
	
	To generalize, we consider the conformal map $f:\mathbb{D}\rightarrow\Om\subset\CC$. Let $u:\Om\rightarrow \RR$ be the solution the Laplace equation with boundary $u\!\mid_{\del\Om}=g$. We define $w \DEF u\circ f: \mathbb{D}\rightarrow \RR$. From complex analysis we know that $w$ is a solution to the Laplace equation on $\mathbb{D}$.
	
	Let $z\in\del\mathbb{D}$ and $\omega\in\del\Om$, such that $f(z) =\omega$, then $w\!\mid_{\del\mathbb{D}}=g\circ f$. Using the chain rule and that $\nu_z=z$ for $z\in\del\mathbb{D}$, we obtain that  $\nu_\omega = \frac{f'(z)}{|f'(z)|}z=\frac{f'(z)}{|f'(z)|}\nu_z$. Using matrix notation to describe the complex multiplication, we can infer that
	\begin{align*}
		\nu_z \cdot\nabla w(z) 
			&=
			|{f'(z)}|
			\begin{bmatrix}
				\del_1 f_1 & \del_2 f_1\\
				\del_1 f_2 & \del_2 f_2
			\end{bmatrix}^{-1}
			\begin{bmatrix}
				(\nu_\omega)_1\\
				(\nu_\omega)_2
			\end{bmatrix}
			\;\cdot\;
			\begin{bmatrix}
				\del_1 f_1 & \del_2 f_1\\
				\del_1 f_2 & \del_2 f_2
			\end{bmatrix}
			\begin{bmatrix}
				\del_1 u\\
				\del_2 u
			\end{bmatrix}\\
			&= |{f'(z)}|\,\nu_\omega\cdot \nabla u(\omega)\,.
	\end{align*}
	From the unit disk case, we can see that
	\begin{align*}
		\NORM{\mathcal{DN}[g]}_{L^2(\del\Om)}
			&= \NORM{\nu_\omega\cdot \nabla u}_{L^2(\del\Om)}
			 = \NORM{|f'|^{-1}\,\nu_z\cdot \nabla w}_{L^2(\del\mathbb{D})}\\
			&\leq \max_{z\in\del\mathbb{D}}|f'(z)|^{-1}\NORM{\nu_z\cdot \nabla w}_{L^2(\del\mathbb{D})}\\
			&\leq \max_{z\in\del\mathbb{D}}|f'(z)|^{-1}\NORM{g\circ f}_{L^2(\del\mathbb{D})}
	\end{align*}
	Applying the equation $p$-times, we can conclude that
	\begin{align*}
		\NORM{\mathcal{DN}^p[g]}_{L^2(\del\Om)}
			\leq \NORM{g^{(p)}}_{L^2(\del\Om)}\Big(\max_{z\in\del\mathbb{D}}|f'(z)|^{-1}\Big)^p\,.
	\end{align*}
\end{proof}

\begin{proposition}\label{prop:uniformL2boundedFcts}
	Let $\Om\in C^\infty$, and let $g\in C^\infty(\del\Om)$. Assume there exists a conformal map $f:\mathbb{D}\rightarrow\Om$, such that $\min |f'|\geq 1$. If there exists a constant $C_g>0$ such that
	\begin{align*}
		\sup_{p\in\NN}\NORM{g^{(p)}}_{L^2(\del\Om)}\leq C_g\,,
	\end{align*}
	then there exists an integer $\kappa>0$ and real constants $(g_i)_{i=0}^\kappa$ such that
	\begin{align*}
		g = \sum_{i=0}^{\kappa-1} g_i s_i\,,
	\end{align*}
	and $\kappa$ denotes the first Steklov eigenvalue $\lambda_i$, which is bigger than $1$.
\end{proposition}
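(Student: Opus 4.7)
The plan is to expand $g$ in the Steklov basis of $L^2(\del\Om)$ and show that under the hypotheses all Fourier coefficients past a finite cutoff are forced to vanish, using the Dirichlet-to-Neumann estimates already established.

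First, since $(s_i)_{i\geq 0}$ is a maximal orthogonal system in $L^2(\del\Om)$ (cf.\ the discussion in Section \ref{sec:Prelim} based on \cite{AuchmutyFoundation}), and since $\NORM{s_i}_{L^2(\del\Om)}^2 = (1+\lambda_i)^{-1}$, one has the $L^2(\del\Om)$-convergent expansion
\begin{align*}
    g = \sum_{i=0}^\infty g_i\, s_i, \qquad g_i = (1+\lambda_i)\int_{\del\Om} g\, s_i.
\end{align*}
The whole game is to prove that $g_i = 0$ whenever $\lambda_i > 1$.

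Second, I would combine Lemma \ref{lemma:DN estimate} with Lemma \ref{lemma:normDN<normg invf}: for every integer $p\geq 1$,
\begin{align*}
    \left|\int_{\del\Om} g\, s_i\right|
    \leq \NORM{\mathcal{DN}^p[g]}_{L^2(\del\Om)}\,\frac{1}{\lambda_i^{p}(1+\lambda_i)^{1/2}}
    \leq \frac{\NORM{g^{(p)}}_{L^2(\del\Om)}\bigl(\max_{z\in\del\mathbb{D}}|f'(z)|^{-1}\bigr)^p}{\lambda_i^{p}(1+\lambda_i)^{1/2}}.
\end{align*}
The assumption $\min|f'|\geq 1$ gives $\max|f'|^{-1}\leq 1$, and the uniform bound $\NORM{g^{(p)}}_{L^2(\del\Om)}\leq C_g$ then yields
\begin{align*}
    |g_i| \leq C_g\,(1+\lambda_i)^{1/2}\,\lambda_i^{-p} \qquad \text{for every } p\in\NN.
\end{align*}
Whenever $\lambda_i > 1$, the right-hand side tends to $0$ as $p\to\infty$, forcing $g_i = 0$.

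Third, by the Weyl-type asymptotics (\ref{equ:lambdaAsymp}) or (\ref{equ:lambdaAsymp-C1}), only finitely many Steklov eigenvalues satisfy $\lambda_i \leq 1$. Setting $\kappa \DEF \min\{i\in\NN_0 : \lambda_i > 1\}$ and truncating the expansion at $\kappa-1$ yields the claimed finite representation $g=\sum_{i=0}^{\kappa-1} g_i s_i$.

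The main technical point I would pay attention to is the passage $p\to\infty$: the conclusion relies crucially on the uniform-in-$p$ bound $\NORM{g^{(p)}}_{L^2(\del\Om)}\leq C_g$ (otherwise the factor $\lambda_i^{-p}$ could be defeated by the growth of the derivatives), and on the geometric hypothesis $\min|f'|\geq 1$ so that the conformal factor in Lemma \ref{lemma:normDN<normf invf} does not contribute an amplification. The rest is orthogonal-series bookkeeping plus the known discreteness and divergence of the Steklov spectrum.
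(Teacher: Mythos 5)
Your proposal is correct and follows essentially the same route as the paper: combine Lemma \ref{lemma:DN estimate} with Lemma \ref{lemma:normDN<normg invf}, use $\min|f'|\geq 1$ and the uniform bound on $\NORM{g^{(p)}}_{L^2(\del\Om)}$ to send $p\to\infty$ and kill every coefficient with $\lambda_i>1$, then invoke the linear growth of the spectrum to get a finite cutoff. You are in fact slightly more explicit than the paper about why the vanishing of the tail coefficients yields the finite representation (via the $L^2$-convergent Steklov expansion), which is a welcome clarification rather than a deviation.
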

This proposition states that any smooth function $g$ with uniformly $L^2$-norm bounded derivatives is a finite linear combination of Steklov functions.

\begin{proof}
	Lemma \ref{lemma:DN estimate} states that
	\begin{align*}
		\int_{\del\Om}g\,s_i\,\leq\,\frac{\NORM{\mathcal{DN}^p[g]}_{L^2(\del\Om)}}{\lambda_i^{p}(1+\lambda_i)^{1/2}}\,.
	\end{align*}
	With Lemma \ref{lemma:normDN<normg invf} we then obtain that
	\begin{align*}
		\int_{\del\Om}g\,s_i\,
			\leq\,\frac{C_g\Big(\max_{z\in\del\mathbb{D}}|f'(z)|^{-1}\Big)^p}{\lambda_i^{p}(1+\lambda_i)^{1/2}}
			\leq \frac{C_g \, 1}{\lambda_i^{p}(1+\lambda_i)^{1/2}}\,.
	\end{align*}
	Thus, if $\lambda_i>1$ then $\big|\int_{\del\Om}g\,s_i\big|\leq 0$, if we let $p\rightarrow\infty$. From Section \ref{sec:HEO}, and the fact that $\lambda_i$ increase linearly, there exists a constant $\kappa$, which denotes the first $\lambda_i>1$, and which satisfies the Proposition. 
\end{proof}

For the special case $\Om=\mathbb{D}$, we have the following corollary.
\begin{corollary}
	Given a smooth, $2\pi$-periodic real function $g$, if there exists a constant $C>0$ such that
	\begin{align*}
		\sup_{p\in\NN}\NORM{g^{(p)}}_{L^2(\del\Om)}\leq C\,,
	\end{align*}
	then there exist three constants $a,b,c\in\RR$ such that
	\begin{align*}
		 g(t) = a+b\,\cos(t)+c\,\sin(t) \,.
	\end{align*}
\end{corollary}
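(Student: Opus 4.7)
The plan is to apply Proposition \ref{prop:uniformL2boundedFcts} directly to the unit disk $\Om = \mathbb{D}$, then compute the Steklov spectrum on $\mathbb{D}$ by hand to interpret the resulting finite sum.

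First I would verify the hypotheses of the proposition. The boundary $\del\mathbb{D}$ is smooth, and one may take $f : \mathbb{D} \to \mathbb{D}$ to be the identity conformal map, for which $|f'(z)| \equiv 1$ on $\del\mathbb{D}$, so that $\min_{\del\mathbb{D}} |f'| \geq 1$. The uniform $L^2$ bound on derivatives of $g$ is exactly the quantitative assumption required by the proposition.

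Next I would explicitly identify the Steklov eigenpairs on $\mathbb{D}$. Writing harmonic functions in polar coordinates as
\begin{align*}
    u(r,\theta) = a_0 + \sum_{n=1}^{\infty} r^n \bigl( a_n\cos(n\theta) + b_n\sin(n\theta)\bigr),
\end{align*}
the Steklov condition $\del_r u = \lambda u$ on $r = 1$ gives $\lambda_n = n$ for $n \in \NN_0$, with eigenspace spanned by $\{1\}$ for $n=0$ and by $\{r^n \cos(n\theta),\, r^n \sin(n\theta)\}$ for $n \geq 1$. Listed in non-decreasing order the Steklov eigenvalues on $\mathbb{D}$ are $0, 1, 1, 2, 2, \ldots$, and the restrictions of the first three eigenfunctions to $\del\mathbb{D}$ are (up to normalization) $1,\ \cos t,\ \sin t$.

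Thus the first index $i$ with $\lambda_i > 1$ is $i = 3$, so the threshold $\kappa$ in Proposition \ref{prop:uniformL2boundedFcts} equals $3$. The proposition then yields
\begin{align*}
    g = g_0 s_0 + g_1 s_1 + g_2 s_2,
\end{align*}
and absorbing the normalization constants of $s_0, s_1, s_2$ into new coefficients $a, b, c \in \RR$ gives exactly $g(t) = a + b\cos(t) + c\sin(t)$.

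The only subtle point is interpreting the index $\kappa$ correctly: it is the first index whose eigenvalue strictly exceeds $1$, and the sum that represents $g$ collects only the modes with $\lambda_i \leq 1$. Once this is settled, the corollary is essentially a direct specialization of Proposition \ref{prop:uniformL2boundedFcts} with no further work.
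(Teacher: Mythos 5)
Your proof is correct and matches the paper's intent exactly: the paper presents this corollary as an immediate specialization of Proposition \ref{prop:uniformL2boundedFcts} to $\Om=\mathbb{D}$ and gives no separate proof, and your argument simply supplies the standard computation of the Steklov spectrum of the disk ($\lambda = 0,1,1,2,2,\ldots$ with boundary traces $1,\cos t,\sin t,\ldots$) needed to read off $\kappa=3$. No gaps.
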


\subsection{Theoretical PDE Solution in Steklov Series}\label{sec:HEO}
Let $E_H : L^2(\del\Om)\rightarrow L^2(\Om)$ be the harmonic extension operator 
\begin{align*}
	E_H[g](x) \DEF \sum_{i=0}^\infty 
		(1+\lambda_i)\; s_i(x) \int_{\del\Om}g\, s_i\,.
\end{align*}
We define
\begin{align*}
	g_i \DEF (1+\lambda_i)\int_{\del\Om}g\, s_i\,,
\end{align*}
for $j\in\NN_0$.
Then we define the truncated harmonic extension operator $E_H^{(M)} : L^2(\del\Om)\rightarrow L^2(\Om)$ to be 
\begin{align*}
	E_H^{(M)}[g](x) \DEF \sum_{i=0}^{M} g_i\, s_i(x)\,.
\end{align*}
From \cite[Theorem 6.2]{AuchmutyHsSpaces} we have that for Lipschitz domains that $E_H[g]$ is an isometric isomorphism of $H^{1/2}(\del\Om)$ and the space of solutions to the Laplace equation in $\Om$ equipped with a particular inner product. From \cite{AuchmutySVP} we have that 
\begin{align*}
	\NORM{E_H[g]}_{L^2(\Om)}\leq C\, \NORM{g}_{L^2(\del\Om)}\,,
\end{align*}
for all $g\in L^2(\del\Om)$ for smooth boundaries. More general boundary regularity conditions are given in \cite{AuchmutySVP}.

\subsection{Numerical PDE Solution}

We define $\tilde{u}_{M,N}^{(\mathrm{D})}$ to be the numerical solution to Equation (\ref{PDE:Helmholtz:Dir}) with Dirichlet boundary data $g^{(\mathrm{D})}$ follows. We first approximate the solution $u$ with boundary data $g$ through $E_H^{(M)}[g]$ and then numerically compute the eigenfunctions $s_i$ in $E_H^{(M)}$ through the single layer potential, that is 
\begin{align}\label{equ:s_i=int Gamma phi}
	s_i(x) = \mathcal{S}[\phi_i](x)\,,
\end{align}
for $x\in\Om$, where $\phi\in L^2(\del\Om)$. We approximate $\phi_i$ through numerical means and call the approximation $\tilde{\phi_i}$. We numerically compute $\tilde{\phi_i}$ through the truncation of Fourier representation. To be more precise, we denote $\phi_i(t)\DEF\phi_i(x(t))\in\RR$, for $t\in (-\pi,\pi]$, through the series
\begin{align}\label{equ:phiExpSum}
	\phi_i(t) = \sum_{n=-\infty}^{\infty} c_n^{(i)}\exp(\imagi nt)\,.
\end{align}
Applying the Steklov boundary condition $\del_\nu s_i = \lambda_i s_i$ to the single layer description for $s_i$, we can formulate a condition in terms of $\phi_i$, that is
\begin{align}\label{equ:phiCondition}
	(-\tfrac{1}{2}\mathrm{I}+\mathcal{K}^\ast)[\phi_i] = \lambda_i\,\mathcal{S}[\phi_i]\,.
\end{align}
Through the condition in Equation (\ref{equ:phiCondition}) we can numerically compute the coefficients $c_n^{(i)}$. Then we define the numerical solution to be 
\begin{align*}
	\tilde{\phi_i}(t) = \sum_{n=-N}^{N}c_n^{(i)}\exp(\imagi nt)\,,
\end{align*}
for $N\in\NN$.

We define $\tilde{s_i}=\mathcal{S}[\tilde{\phi_i}]$ and then 
\begin{align*}
	\tilde{u}_{M,N}^{(\mathrm{D})}=\sum_{i=0}^M \tilde{g}_i^{(\mathrm{D})} \tilde{s_i} \quad \text{with}\quad
	\tilde{g}_i^{(\mathrm{D})}=(1+\lambda_i)\int_{\del\Om} g^{(\mathrm{D})}\, \tilde{s_i}\,.
\end{align*}

For the Robin boundary condition we define $\tilde{u}_{M,N}^{(\mathrm{R})}$ to be the numerical solution to Equation (\ref{PDE:Helmholtz:Rob}) with Robin boundary data $g^{(\mathrm{R})}$. Assuming that the solution $u$ to the Helmholtz equation with Robin boundary condition has a trace $u\!\mid_{\del\Om}$ on the boundary, we can formulate $u$ through $E_H[u\!\mid_{\del\Om}]$, that is $u = \sum_{i=1}^\infty \alpha_i \, s_i$, for some coefficients $\alpha_i\in\RR$. Applying this identity to Equation (\ref{PDE:Helmholtz:Rob}) and applying $\int_{\del\Om} s_i$ on both sides, we see that 
\begin{align*}
\alpha_i = \frac{g_i^{(\mathrm{R})}}{\lambda_i+b}\quad \text{with}\quad
	g_i^{(\mathrm{R})}=(1+\lambda_i)\int_{\del\Om} g^{(\mathrm{R})}\, s_i\,.
\end{align*}
Hence we define that
\begin{align*}
	\tilde{u}_{M,N}^{(\mathrm{R})}=\sum_{i=0}^M \frac{\tilde{g}_i^{(\mathrm{R})}}{\lambda_i+b} \, \tilde{s_i} \quad \text{with}\quad
	\tilde{g}_i^{(\mathrm{R})}=(1+\lambda_i)\int_{\del\Om} g^{(\mathrm{R})}\, \tilde{s_i}\,.
\end{align*}

\section{Main Result}\label{sec:main}
\begin{theorem}\label{thm:main}
	Let $\del\Om\in C^p$ with $p\in\NN$, $p\geq 2$ and let $g^{(\mathrm{D})}\in H^q(\del\Om)$ for $q\geq 2$. Let $\rho\DEF \min(p,q)$. Then we have that the exact solution $u$ to the Dirichlet problem with boundary data $g^{(\mathrm{D})}$ is $E_H[g^{(\mathrm{D})}]$. For the numerical approximation $\tilde{u}_{M,N}^{(\mathrm{D})}$ we have that
	\begin{align*}
		\NORM{u-\tilde{u}_{M,N}^{(\mathrm{D})}}_\delta
			&= \tilde{C}_{\rho} \sqrt{\sum_{i=M+1}^\infty \frac{1}{\lambda_i^{2\rho-1}}}
				+  \sum_{i=0}^M C_i\NORM{\phi^{(p)}}_{L^2(\del\Om)}\!\!\!\!\frac{\NORM{g}_{L^2(\del\Om)}\, \NORM{\tilde{\phi_i}}_{L^2(\del\Om)}}{N^{p-1/2}}\\
			&= \OO\Big(\frac{1}{M^{\rho-1}}\Big)+\OO\Big(\frac{M}{N^{p-1/2}}\Big)\,,
	\end{align*}
	where $\tilde{C}_{\rho}>0$ is proportional to $\sup_{n=1,\ldots,q}\NORM{g^{(n)}}_{L^2(\del\Om)}$.
\end{theorem}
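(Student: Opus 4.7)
The plan is to split the error via the triangle inequality
\begin{align*}
\|u-\tilde{u}_{M,N}^{(\mathrm{D})}\|_\delta \;\leq\; \underbrace{\|E_H[g^{(\mathrm{D})}] - E_H^{(M)}[g^{(\mathrm{D})}]\|_\delta}_{\text{truncation error}} \;+\; \underbrace{\|E_H^{(M)}[g^{(\mathrm{D})}] - \tilde{u}_{M,N}^{(\mathrm{D})}\|_\delta}_{\text{eigenfunction approximation error}},
\end{align*}
and bound each piece separately. The first piece produces the sum $\tilde{C}_\rho\sqrt{\sum_{i>M}\lambda_i^{-(2\rho-1)}}$, the second piece produces $\sum_{i=0}^{M}C_i\,\|\phi^{(p)}\|_{L^2}\,\|g\|_{L^2}\|\tilde{\phi_i}\|_{L^2}/N^{p-1/2}$.

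For the truncation error, I would use the fact that $\{s_i\}$ is orthonormal with respect to $\langle\cdot,\cdot\rangle_\delta$ (Section \ref{sec:Prelim}), so that
\[
\|E_H[g^{(\mathrm{D})}] - E_H^{(M)}[g^{(\mathrm{D})}]\|_\delta^2 = \sum_{i=M+1}^\infty g_i^2 .
\]
Each coefficient $g_i=(1+\lambda_i)\int_{\partial\Om} g^{(\mathrm{D})} s_i$ is controlled by iterating Lemma \ref{lemma:DN estimate} exactly $\rho$ times to obtain $|g_i|\lesssim (1+\lambda_i)^{1/2}\lambda_i^{-\rho}\|\mathcal{DN}^\rho[g^{(\mathrm{D})}]\|_{L^2(\partial\Om)}$; this is legitimate because $\partial\Om\in C^p\subset H^{p+1/2}_{\text{loc}}$ and $g^{(\mathrm{D})}\in H^q$ with $\rho=\min(p,q)$. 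Lemma \ref{lemma:normDN<normg invf} then converts $\|\mathcal{DN}^\rho[g^{(\mathrm{D})}]\|_{L^2}$ into the factor $\sup_{n\le q}\|g^{(n)}\|_{L^2}$ times $(\max|f'|^{-1})^\rho$, which I would absorb into $\tilde{C}_\rho$. Squaring and summing yields $\sum_{i>M}(1+\lambda_i)\lambda_i^{-2\rho}\sim \sum_{i>M}\lambda_i^{-(2\rho-1)}$, and (\ref{equ:lambdaAsymp-C1}) gives $\lambda_i\sim i$, so this is $\OO(M^{-(\rho-1)})$.

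For the approximation piece I would use the identity
\[
g_i s_i - \tilde{g}_i\tilde{s}_i = g_i(s_i-\tilde{s}_i) + (g_i-\tilde{g}_i)\tilde{s}_i, \qquad g_i-\tilde{g}_i = (1+\lambda_i)\int_{\partial\Om} g^{(\mathrm{D})}(s_i-\tilde{s}_i),
\]
together with $s_i - \tilde{s_i} = \mathcal{S}[\phi_i-\tilde{\phi_i}]$. Since Proposition \ref{prop:s_i in H^p} gives $\phi_i\in H^p(\partial\Om)$, the Fourier-truncation estimate for the tail of (\ref{equ:phiExpSum}) gives $\|\phi_i-\tilde{\phi_i}\|_{H^{-1/2}(\partial\Om)} \lesssim N^{-(p-1/2)}\|\phi_i^{(p)}\|_{L^2}$ (the extra half order coming from the $H^{-1/2}$ weight in the Fourier multipliers). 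The single-layer mapping property $\mathcal{S}:H^{-1/2}(\partial\Om)\to H^1(\Om)$, combined with Lemma \ref{lemma:HighRegOps}, then transfers this to $\|s_i-\tilde{s_i}\|_\delta \lesssim N^{-(p-1/2)}\|\phi_i^{(p)}\|_{L^2}$. The cross factors $\|g\|_{L^2}$ and $\|\tilde{\phi_i}\|_{L^2}$ appear from applying Cauchy--Schwarz to $(g_i-\tilde{g}_i)\int g^{(\mathrm{D})}(s_i-\tilde{s_i})$ and from bounding $\|\tilde{s_i}\|_\delta \lesssim \|\tilde{\phi_i}\|_{L^2}$ via continuity of $\mathcal{S}$. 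Summing $i=0,\dots,M$ produces $\OO(M/N^{p-1/2})$.

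The main obstacle is the careful bookkeeping in the second piece: tracking in which trace space the Fourier-truncation estimate lives, lifting it through $\mathcal{S}$ into $\|\cdot\|_\delta \sim H^1(\Om)$, and verifying that the $(p-1/2)$ rate (rather than $p$ or $p+1/2$) is the sharp exponent after combining the $H^{-1/2}$-loss from $\mathcal{S}^{-1}$-like arguments with the gain from the smoothing mapping property. A subtle point is that the coefficient error $g_i-\tilde{g}_i$ must be bounded \emph{without} an extra factor of $(1+\lambda_i)^{1/2}$ spoiling summability over $i\le M$; using $\mathcal{S}:L^2\to H^1(\partial\Om)$ from Lemma \ref{lemma:HighRegOps} inside the pairing $\int g^{(\mathrm{D})}\mathcal{S}[\phi_i-\tilde{\phi_i}]$ is the key manoeuvre that keeps the constants $C_i$ uniformly bounded and produces the clean factorisation displayed in the theorem.
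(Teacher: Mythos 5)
Your proposal follows essentially the same route as the paper: the identical triangle-inequality split, the truncation tail controlled by iterating Lemma \ref{lemma:DN estimate} and Lemma \ref{lemma:normDN<normg invf} together with the linear growth of $\lambda_i$, and the eigenfunction-approximation term controlled through the single-layer representation, Proposition \ref{prop:s_i in H^p}, and Fourier truncation of $\phi_i$. The only cosmetic differences are your two-term product decomposition (the paper uses three terms) and your attribution of the half-power in $N^{-(p-1/2)}$ to an $H^{-1/2}$ weight, whereas the paper obtains it from summing the $L^2$ Fourier tail $\sum_{n>N}n^{-2p}=\OO(N^{-(2p-1)})$; both yield the stated exponent.
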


\begin{corollary}\label{coro:main}
	Let $\del\Om\in C^p$ with $p\in\NN$, $p\geq 2$ and let $g^{(\mathrm{R})}\in H^q(\del\Om)$ for $q\geq 2$. Let $\rho\DEF \min(p,q)$. Then we have for the exact solution $u$ to the Robin problem with boundary data $g^{(\mathrm{R})}$ that 
	\begin{align}\label{coro:term}
	u = \sum_{i=0}^\infty \frac{1+\lambda_i}{b+\lambda_i}\;{s_i}\int_{\del\Om} g^{(\mathrm{R})}\, {s_i}
	\end{align}
	and for the numerical approximation $\tilde{u}_{M,N}^{(\mathrm{R})}$ we have that
	\begin{align*}
		\NORM{u-\tilde{u}_{M,N}^{(\mathrm{R})}}_\delta		
			&= \OO\Big(\frac{1}{M^{\rho}}\Big)+\OO\Big(\frac{\log(M)}{N^{p-1/2}}\Big)\,.
	\end{align*}
\end{corollary}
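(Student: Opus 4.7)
The plan is to first derive the exact series representation (\ref{coro:term}), and then bound the error by adapting the two-part decomposition used in Theorem \ref{thm:main}, exploiting the extra factor $1/(b+\lambda_i)$ that distinguishes the Robin problem from the Dirichlet one.

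For the exact formula, since $u$ solves the Laplace equation in $\Om$ and has a trace $u\!\mid_{\del\Om}\in H^{q-1/2}(\del\Om)\subset L^2(\del\Om)$, the harmonic extension theory in Section \ref{sec:HEO} gives $u = E_H[u\!\mid_{\del\Om}] = \sum_{i=0}^\infty (1+\lambda_i)\,s_i \int_{\del\Om} u\,s_i$. To identify the coefficients, I would test the Robin condition (\ref{PDE:Helmholtz:Rob}) against $s_i$ on $\del\Om$ and apply Green's identity: since both $u$ and $s_i$ are harmonic,
\begin{align*}
\int_{\del\Om} g^{(\mathrm{R})} s_i \;=\; \int_{\del\Om}\!\big(\del_\nu u + b\,u\big)\,s_i \;=\; \int_{\del\Om} u\,\del_\nu s_i + b\!\int_{\del\Om} u\,s_i \;=\; (\lambda_i + b)\!\int_{\del\Om} u\,s_i\,,
\end{align*}
so $\int_{\del\Om} u\,s_i = (\lambda_i+b)^{-1}\int_{\del\Om} g^{(\mathrm{R})}s_i$, which substituted back yields (\ref{coro:term}).

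For the error estimate, I would split
\begin{align*}
u-\tilde{u}_{M,N}^{(\mathrm{R})} \;=\; \underbrace{\sum_{i=M+1}^\infty \frac{1+\lambda_i}{b+\lambda_i}\,s_i \!\int_{\del\Om}\!g^{(\mathrm{R})}s_i}_{\text{truncation part}} \;+\; \underbrace{\sum_{i=0}^M \frac{1}{b+\lambda_i}\Big(g_i^{(\mathrm{R})}s_i - \tilde{g}_i^{(\mathrm{R})}\tilde{s}_i\Big)}_{\text{numerical part}}\,,
\end{align*}
and treat each piece separately in the $\|\cdot\|_\delta$-norm, using that the $s_i$ are $\langle\cdot,\cdot\rangle_\del$-orthonormal. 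For the truncation part, orthonormality plus Lemma \ref{lemma:DN estimate} with exponent $\rho$ gives
\begin{align*}
\Bigl\|\sum_{i=M+1}^\infty \tfrac{1+\lambda_i}{b+\lambda_i}g_i' s_i\Bigr\|_\delta^2 \;\le\; \sum_{i=M+1}^\infty \frac{(1+\lambda_i)^2}{(b+\lambda_i)^2}\,\frac{\|\mathcal{DN}^\rho[g^{(\mathrm{R})}]\|_{L^2}^{2}}{\lambda_i^{2\rho}(1+\lambda_i)} \;\lesssim\; \sum_{i=M+1}^\infty \frac{1}{\lambda_i^{2\rho+1}}\,,
\end{align*}
which, since $\lambda_i \sim i$ by (\ref{equ:lambdaAsymp-C1}), is $\OO(M^{-2\rho})$, producing the claimed $\OO(1/M^\rho)$ in norm. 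This is one power better than the Dirichlet tail in Theorem \ref{thm:main}, which is exactly what the extra $(b+\lambda_i)^{-1}$ buys.

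For the numerical part, I would reuse the per-term bound on $\|g_i^{(\mathrm{D})}s_i - \tilde{g}_i^{(\mathrm{D})}\tilde{s}_i\|_\delta$ established in the proof of Theorem \ref{thm:main} (with $g^{(\mathrm{D})}$ replaced by $g^{(\mathrm{R})}$); the same bound applies unchanged because only $g$ enters, not $b$. Summing these term errors with the additional weight $(b+\lambda_i)^{-1}\lesssim i^{-1}$ converts the $\sum_{i=0}^M 1 = \OO(M)$ factor appearing in the Dirichlet estimate into $\sum_{i=1}^M i^{-1} = \OO(\log M)$, giving $\OO(\log(M)/N^{p-1/2})$. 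Adding the two contributions yields the stated asymptotics. The main technical obstacle is making the reuse of the Dirichlet per-term bound precise: one must verify that the estimates from Theorem \ref{thm:main} genuinely factor into a constant depending only on $\|\tilde{\phi}_i\|_{L^2}$, $\|g\|_{L^2}$ and $N$, so that multiplying by $(b+\lambda_i)^{-1}$ and summing is legitimate and produces the harmonic-sum $\log M$ rather than a worse growth.
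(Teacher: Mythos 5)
Your proposal is correct and follows essentially the same route as the paper: the series formula is obtained by testing the Robin condition against $s_i$ and applying Green's identity, and the error is split by the triangle inequality into a truncation tail (estimated via Lemma \ref{lemma:DN estimate} and orthonormality, gaining one power of $M$ from the factor $(b+\lambda_i)^{-1}$) plus a numerical part whose per-term Dirichlet bound is reweighted by $(b+\lambda_i)^{-1}$ to turn the $\OO(M)$ sum into $\OO(\log M)$. Your write-up is in fact somewhat more explicit than the paper's, which leaves the Green's-identity computation and the harmonic-sum step implicit.
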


We note here that the Robin boundary problem has a factor of $\frac{1}{M}$ lower  error in the numerical approximation than the Dirichlet boundary problem. This follows due to the factor $\frac{1}{b+\lambda_i}$ in the Steklov series expansion, Equation (\ref{coro:term}).

\begin{remark}
	Interpreting the numerical tests in the following section, Section \ref{sec:Numerics}, we hypothesise a stronger result than what is given in Theorem \ref{thm:main}, and that is
	\begin{align}\label{equ:remAboutThm}
		||u^{(\mathrm{D})}-\tilde{u}_{M,\infty}^{(\mathrm{D})}||&_{\del}
			= \OO(M^{-(\min(p+1.5,q)-1/2)})\,,\nonumber\\
		||u^{(\mathrm{D})}-\tilde{u}_{M,\infty}^{(\mathrm{D})}||&_{L^2(\del\Om)}
			= \OO(M^{-\min(p+1.5,q)})\,,
	\end{align}
	and
	\begin{align}\label{equ:remAboutThm:robin}
		||u^{(\mathrm{R})}-\tilde{u}_{M,\infty}^{(\mathrm{R})}||&_{\del}
			= \OO(M^{-(\min(p+1.5,q)+1/2)})\,,\nonumber\\
		||u^{(\mathrm{R})}-\tilde{u}_{M,\infty}^{(\mathrm{R})}||&_{L^2(\del\Om)}
			=  \OO(M^{-(\min(p+1.5,q)+1)})\,,
	\end{align}
	for smooth enough domains.
\end{remark}
To prove Equation (\ref{equ:remAboutThm}) and (\ref{equ:remAboutThm:robin}) we suppose a more accurate estimation of the term  $$\int_{\del\Om}\mathcal{DN}^p[g]\, s_i$$ in the proof of Lemma \ref{lemma:DN estimate} is necessary. We used the Cauchy-Schwarz inequality instead. 

\begin{proof}[Theorem \ref{thm:main}]
	The first statement follows from \cite[Theorem 6.2]{AuchmutyHsSpaces}.
	
	For short we denote $g_i = g_i^{(\mathrm{D})}$ and $\tilde{g}_i = \tilde{g}_i^{(\mathrm{D})}$. Using the triangle inequality, we have that
	\begin{align*}
		\NORM{E_H[g]-\tilde{u}_{M,N}^{(\mathrm{D})}}_\del
			&\leq \NORM{E_H[g]-E_H^{(M)}[g]}_\del
		+\NORM{E_H^{(M)}[g]-\tilde{u}_{M,N}^{(\mathrm{D})}}_\del\\
			&\leq \NORM{\sum_{i=M+1}^\infty g_i s_i}_\del
		+\sum_{i=0}^{M} \NORM{g_i s_i-\tilde{g_i}\tilde{s_i}}_\del\,.
	\end{align*}
	Let us consider $\NORM{\sum_{i=M+1}^\infty g_i s_i}_\del$. Using the properties of the Steklov eigenfunctions, we have that 
	\begin{align*}
		\NORM{ \sum_{i=M+1}^\infty g_i s_i}_\del^2
			&= \sum_{i=M+1}^\infty g_i^2 (1+\lambda_i) \int_{\del\Om}(s_i)^2 
			= \sum_{i=M+1}^\infty  g_i^2\\ 
			&=  \sum_{i=M+1}^\infty (1+\lambda_i)^2 \Big(\int_{\del\Om} g\, s_i \Big)^2\\
			&\leq \sum_{i=M+1}^\infty (1+\lambda_i)^2 \Big(C_{\rho} \frac{1}{\lambda_i^{\min(p,q)}(1+\lambda_i)^{1/2}}\Big)^2\\
			&= \OO\Big(\frac{1}{M^{2(\min(p,q)-1))}}\Big)\,,
	\end{align*}
	for $M\rightarrow\infty$ and any $p\in\NN$, where we used Lemma \ref{lemma:DN estimate} and \ref{lemma:normDN<normg invf} and that $\sum_{i=M}^\infty 1/i^n=\OO(1/M^{n-1})$, for $n>1$, and that $\lambda_i$ increase linear with $i$ according to Equation (\ref{equ:lambdaAsymp-C1}).
%
%
	The term $\NORM{g_i s_i-\tilde{g_i}\tilde{s_i}}_\del$ can be reformulated to
	\begin{align}\label{equ:proof:1}
		\NORM{(g_i-\tilde{g_i}) (s_i-\tilde{s_i})+\tilde{s_i}(g_i-\tilde{g_i})+\tilde{g_i}(s_i-\tilde{s_i})}_\del\,.
	\end{align}	
	Using the triangle inequality, we can consider each term separately. 
	
	First we consider $\NORM{s_i-\tilde{s_i}}_\del$. Using that $\NORM{\cdot}_\del$ is equivalent to $\NORM{\cdot}_{H^1}$, \cite[Corollary 6.2]{AuchmutyFoundation}, let us consider its $H^1$ norm. We have from \cite[Theorem 7.8]{Mazya2010} that
	\begin{align*}
		\NORM{s_i-\tilde{s_i}}_{H^1(\Om)}
		 = \NORM{\mathcal{S}[\phi_i-\tilde{\phi_i}]}_{H^1(\Om)}
		 \leq C'' \, \NORM{\mathcal{S}[\phi_i-\tilde{\phi_i}]\mid_{\del\Om}}_{H^{1/2}(\del\Om)}\,,
	\end{align*} 
	where we use that $\mathcal{S}[\cdot]$ satisfies the Laplace equation with Dirichlet data $\mathcal{S}[\cdot]_{\del\Om}$. Then according to \cite[Theorem 3.3]{VerchotaSingleLayer} we further have that
	\begin{align*}
		\NORM{\mathcal{S}[\phi_i-\tilde{\phi_i}]\mid_{\del\Om}}_{H^{1}(\del\Om)}
		\leq C' \, \NORM{\phi_i-\tilde{\phi_i}}_{L^2(\del\Om)}\,.
	\end{align*} 
	Thus we can readily see that
	\begin{align}\label{equ:proof:2}
		\NORM{s_i-\tilde{s_i}}_{\del}
		 \leq C \, \NORM{\phi_i-\tilde{\phi_i}}_{L^2(\del\Om)}\,.
	\end{align} 
	Similarly $\NORM{\tilde{s_i}}_{\del}\leq C \, \NORM{\tilde{\phi_i}}_{L^2(\del\Om)}$.
	Using the theory on convergence of Fourier series, we have that
	\begin{align}\label{equ:proof:3}
		\NORM{\phi_i-\tilde{\phi_i}}_{L^2(\del\Om)}^2
			&\!\!\!=\!\!\! \sum_{|n|=N+1}^{\infty}|c_n^{(i)}|^2\int_{-\pi}^\pi|\exp(\imagi nt)|^2\intd t \nonumber\\
			&= 2\pi \sum_{|n|=N+1}^{\infty}|c_n^{(i)}|^2
			=  \sum_{n=N+1}^{\infty} \frac{C}{n^{2p}} \NORM{\phi^{(p)}}_{L^2(\del\Om)}^2 \nonumber\\
			&=  \NORM{\phi^{(p)}}_{L^2(\del\Om)}^2 \OO(1/N^{2p-1}) \,,
	\end{align} 
	where we used that $\sum_{i=N}^\infty \frac{1}{i^{p}}=\OO(1/N^{p-1})$, that $\phi\in H^p(\del\Om)$, due to Proposition \ref{prop:s_i in H^p} and results on the coefficients of Fourier series. 
	
	Next we consider the term $|g_i-\tilde{g_i}|$ in Equation (\ref{equ:proof:1}). Using the Cauchy-Schwarz inequality, we have that
	\begin{align*}
		|g_i-\tilde{g_i}|= \Big|\int_{\del\Om}g\,(s_i-\tilde{s}_i)\Big|\leq \NORM{g}_{L^2(\del\Om)}\NORM{s_i-\tilde{s}_i}_{L^2(\del\Om)}\,,
	\end{align*}
	Then with Equations (\ref{equ:proof:2})-(\ref{equ:proof:3}) we can infer that 
	\begin{align}\label{equ:proof:4}
		|g_i-\tilde{g_i}|\leq \OO(1/N^{p-1/2}) \NORM{g}_{L^2(\del\Om)}\, \NORM{\phi^{(\min(p,q))}}_{L^2(\del\Om)}\,,
	\end{align}
	and similarly for $|\tilde{g}_i|\leq C \,\NORM{g}_{L^2(\del\Om)} \NORM{\tilde{\phi_i}}_{L^2(\del\Om)}$.
	
	With Inequalities (\ref{equ:proof:2})-(\ref{equ:proof:4}) we can estimate the term $\NORM{g_i s_i-\tilde{g_i}\tilde{s_i}}_\del$ to be less or equal to
	\begin{align*}
		\OO(1/N^{p-1/2}) \NORM{g}_{L^2(\del\Om)}\, \NORM{\phi^{(p)}}_{L^2(\del\Om)}\, \NORM{\tilde{\phi_i}}_{L^2(\del\Om)}\,.
	\end{align*}
	We note here that the series $\NORM{\tilde{\phi_i}}_{L^2(\del\Om)}$ is bounded from above because so is $\NORM{s_i}_{L^2(\del\Om)}$ and $\mathcal{S}$ is a bounded isomorphism. With the first inequality in this proof, the theorem follows.
\end{proof}

\begin{proof}[Corollary \ref{coro:main}]
	For the first statement, the solution $u$ has a trace on the boundary $u\!\mid_{\del\Om}$ and from Theorem \ref{thm:main} we can infer that $E_H[u\!\mid_{\del\Om}]$ represents $u$. Applying the representation to the Robin boundary condition, then applying $s_i$ on both sides and integrating both sides over the boundary $\del\Om$ we obtain an expression for all the coefficients in the infinite sum $E_H[u\!\mid_{\del\Om}]$. This then leads us to Term (\ref{coro:term}).
	
	For short we denote $g_i = g_i^{(\mathrm{R})}$ and $\tilde{g}_i = \tilde{g}_i^{(\mathrm{R})}$. For the second statement, we proceed similarly to the proof of Theorem \ref{thm:main}. With the triangle inequality we obtain
	\begin{align*}
		\NORM{u-\tilde{u}_{M,N}^{(\mathrm{R})}}_\del
			&\leq \NORM{\sum_{i=M+1}^\infty \frac{ g_i\, s_i}{\lambda_i+b}}_\del
		+\sum_{i=0}^{M} \frac{ 1}{\lambda_i+b}\NORM{g_i s_i-\tilde{g_i}\tilde{s_i}}_\del\,.
	\end{align*}
	The first term on the right-hand side can be estimated similarly to the proof of Theorem \ref{thm:main}, then we obtain
	\begin{align*}
		\NORM{ \sum_{i=M+1}^\infty \frac{ g_i\, s_i}{\lambda_i+b}}_\del^2 
			&=  \sum_{i=M+1}^\infty \frac{(1+\lambda_i)^2}{(b+\lambda_j)^2} \Big(\int_{\del\Om} g\, s_i \Big)^2\\
			&\leq \sum_{i=M+1}^\infty \frac{(1+\lambda_i)^2}{(b+\lambda_j)^2} \Big(C_p \frac{1}{\lambda_i^p(1+\lambda_i)^{1/2}}\Big)^2\\
			&= \OO\Big(\frac{1}{M^{2p}}\Big)\,,
	\end{align*}
	For the second term we analogously get
	\begin{align*}
		\frac{\NORM{g_i s_i-\tilde{g_i}\tilde{s_i}}_\delta}{b+\lambda_i}
			\leq\OO(\frac{1}{(b+\lambda_i)\,N^{p-1/2}}) \NORM{g}_{L^2(\del\Om)}\, \NORM{\phi^{(p)}}_{L^2(\del\Om)}\, \NORM{\tilde{\phi_i}}_{L^2(\del\Om)}\,.
	\end{align*}
\end{proof}

\section{Numerical Implementation and Tests}\label{sec:Numerics}

\subsection{Numerical Implementation}
We give three methods to numerically compute accurate 2-dimensional solutions to the Steklov eigenvalue problem. 
\vspace{0.2cm}

\underline{First method (Conformal Method)}

The first method relies on the implementation in \cite{ConfNumImpl}, where they used a conformal map, which maps the unit disk to the domain, with which they were able to reformulate the Steklov boundary problem. Using Fourier series they achieved very accurate eigenvalues, but they did not provide a method to evaluate the corresponding eigenfunctions. Here we rewrite their method to compute the eigenfunctions and achieve high accuracy.  

Given a domain $\Om\in\RR^2$ and a conformal map $f: \CC\rightarrow\CC$, which maps $\{z\in\CC | |z|^2<1\}$ to the complex embedded domain $\Om\in\CC$. Let $(\lambda_i,s_i)$ be one of the solutions to the Steklov problem (\ref{PDE:Steklov}). Then according to \cite[Proposition 2]{ConfNumImpl} there exists an analytic function $\Psi_i:\Om\rightarrow\CC$, such that $s_i = \mathcal{R}(\Psi_i)$, the real part of $\Psi_i$, and the following equation is satisfied,
\begin{align}\label{equ:Re wPsi = lam f RePsi}
	\mathcal{R}(\omega\; \del_\omega\Psi_i) = \lambda_i \,|\del_\omega f|\,\mathcal{R}(\Psi_i)\,,
\end{align}
where $|\omega| = 1$ and $\del_\omega$ denotes the complex derivative. For the next step we decompose $\Psi_i$ in their Fourier series on the unit circle, that is
\begin{align*}
	\Psi_i(\omega) = \sum_{n=-\infty}^\infty b^{(i)}_n \omega^n\,,
\end{align*}
for $|\omega| = 1$. From Equation (\ref{equ:Re wPsi = lam f RePsi}) we can infer that $b^{(i)}_n = 0$ for all $n\leq -1$. Next we truncate the last three Fourier series up to coefficient $N\in\NN$ and we define the nodes $(\omega_l)_{l=1}^L$, $L\in\NN$, on the unit disk. Then we define the matrices
\begin{align*}
	\mathbf{N} &= \begin{bmatrix}
    0       & 0      & 0      & \dots   & 0 \\
    0       & 1      & 0      & \dots   & 0 \\
    0       & 0      & 2      & \dots   & 0 \\
    \vdots  & \vdots & \vdots & \ddots  & 0 \\
    0       & 0      & 0      & 0       & N
	\end{bmatrix}\,,\\
	\mathbf{D} &= \begin{bmatrix}
    |\del_\omega f(\omega_1)|       & 0      & 0      & \dots   & 0 \\
    0       & |\del_\omega f(\omega_2)|      & 0      & \dots   & 0 \\
    0       & 0      & |\del_\omega f(\omega_2)|      & \dots   & 0 \\
    \vdots  & \vdots & \vdots & \ddots  & 0 \\
    0       & 0      & 0      & 0       & |\del_\omega f(\omega_N)|
	\end{bmatrix}\,,
\end{align*}
as well as $\mathbf{E}_{n,l}=\omega_l^n$ and the vector $\mathbf{B}^{(i)}_{n}=b^{(i)}_n$. Then we can rewrite Equation (\ref{equ:Re wPsi = lam f RePsi}) to
\begin{align*}
	\frac{1}{2}((\mathbf{N}\mathbf{E})^\TransT \mathbf{B}^{(i)}+\overline{(\mathbf{N}\mathbf{E})^\TransT \mathbf{B}^{(i)}}) 
		= \lambda_i \,\mathbf{D}\,\frac{1}{2}(\mathbf{E}^\TransT \mathbf{B}^{(i)}+\overline{\mathbf{E}^\TransT \mathbf{B}^{(i)}})\,.
\end{align*}
Splitting $\mathbf{E}$ and $\mathbf{B}^{(i)}$ into their real parts $\mathbf{E}_{R}$ and $\mathbf{B}^{(i)}_{R}$ and their imaginary parts $\mathbf{E}_{I}$ and $\mathbf{B}^{(i)}_{I}$, we can reformulate the last equation to
\begin{align}\label{equ:matrixSteklovProblem}
	\begin{bmatrix}
    	(\mathbf{N}\mathbf{E}_R)^\TransT & -(\mathbf{N}\mathbf{E}_I)^\TransT      
	\end{bmatrix}
	\;
	\begin{bmatrix}
    	\mathbf{B}^{(i)}_{R} \\ \mathbf{B}^{(i)}_{I}     
	\end{bmatrix}
	=
	\lambda_i
	\underbrace{
	\begin{bmatrix}
    	\mathbf{D}(\mathbf{E}_R)^\TransT & -\mathbf{D}(\mathbf{E}_I)^\TransT      
	\end{bmatrix}}_{\FED \mathbf{R}}
	\;
	\begin{bmatrix}
    	\mathbf{B}^{(i)}_{R} \\ \mathbf{B}^{(i)}_{I}     
	\end{bmatrix}\,.
\end{align}
Given that $L=2N$, we have square matrices on both sides and then the equation represents an generalized eigenvalue problem. The in-build MATLAB \cite{MATLAB} program \verb+eigs+ can be used to solve the problem and obtain $\lambda_i$ and $\mathbf{B}^{(i)}$. For non-square matrices we use the singular value decomposition for $\mathbf{R} = \mathbf{U}\mathbf{S}\mathbf{V}^\mathrm{H}$, then remove the non-compatible singular values, such that we can reformulate Equation (\ref{equ:matrixSteklovProblem}) into a standard eigenvalue problem. Subsequently we solve the problem with \verb+eigs+. 
When we obtain the coefficients $\mathbf{B}^{(i)}$ for $i=1,\ldots,M$, we evaluate the Steklov eigenfunction $s_i= \mathcal{R}(\mathbf{E}^\TransT \mathbf{B}^{(i)})$, at the nodes $(z_l)_{l=1}^L$. Afterwards we use the Gram-Schmidt orthogonalisation on all $(s_i)_{i=1,\ldots,M}$ with the inner-product $\langle \cdot\,,\cdot \rangle_\del$.

The disadvantage to this method is to find a conformal map. Especially, the map requires that the conformal map has no singularity in its domain. This is an issue because even the conformal map, which maps the unit circle to an ellipse, cannot not be described in a simple form. The Joukowski map is such a map, which maps the unit circle to an ellipse, but it has a singularity at the origin and at infinity. The Riemann mapping theorem guarantees a desired conformal map, but it is not easily obtainable. One way to obtain a conformal mapping to the ellipse is through the Bergman kernel, we refer to \cite[pp. 529 - 552]{HencriciPeterVol3} for further instructions.
For polygonal domains, we have to find the Fourier coefficients by evaluating $|\del_\omega f|$ by hand. In general this is done using the Schwarz-Christoffel conformal map. This map maps the upper half sphere onto the interior of a simple polygon. For the special case of the square $[-1,1]\times[-1,1]$, the function $f$, it maps the unit disk to the square, can be formulated as
\begin{align*}
	f(w) = 2\,c \int_0^{\frac{\imagi(1-\omega)}{1+\omega}}\!\!\!\!\frac{\intd z}{\sqrt{z(1-z^2)}}\;-\;(1-\imagi)\,,
\end{align*}
where $c=\frac{\Gamma(\tfrac{3}{4})}{2\sqrt{\pi}\Gamma(\tfrac{5}{4})}$. Given $\omega = \exp(\imagi \theta)$, $\theta\in (-\pi,\pi]$, then $\frac{\imagi(1-\omega)}{1+\omega}= \tan(\theta/2)$. Then we can infer that
\begin{align*}
	\del_\omega f(w) = 2\,c \frac{-2\imagi}{(1+\omega)^2\sqrt{\tan(\theta/2)(1-\tan(\theta/2)^2)}}\,.
\end{align*}
Then we can conclude that
\begin{align*}
	|\del_\omega f(w)|^2 =  \frac{c^2}{|\cos(\theta)|\cos(\theta/2)|\sin(\theta/2)|}\,.
\end{align*} 
\vspace{0.2cm}

\underline{Second method (Weak Form Method)}

The second method does not rely on a conformal map, but instead uses Green's identity to obtain that
\begin{align*}
	\int_{\del\Om} s_i\,\del v\,\intd\sigma = \int_{\del\Om} \lambda_i s_i\, v\,\intd\sigma\,,
\end{align*}
for all $i\in\NN_0$ and all $v\in C^{\infty}(\overline{\Om})$, with $\Laplace v = 0$ in $\Om$. 
We then use the trapezoidal rule to approximate the integral through $L\in\NN$ nodes on the boundary. Then the equation can be rewritten to
\begin{align*}
	\mathbf{F_{\del}}^\TransT\,{\Sigma}\,\mathbf{s}_i = \lambda_i\,\mathbf{F}^\TransT\,{\Sigma}\,\mathbf{s}_i\,,
\end{align*}
where $\mathbf{F}$ is a vector of the evaluation of the testfunction at the nodes on the boundary, and $\mathbf{F_{\del}}$ is the evaluation of the normal derivative of the testfunction at the nodes, analogous for the vector $\mathbf{s}_i$, and where ${\Sigma}$ is a diagonal matrix whose entries represent the additional information from the trapezoidal rule and the curve parametrization. We then choose the following testfunctions
\begin{align*}
	v_{n,\,\text{cos}} &= r^n\cos(n\, \theta)\,,\\ 
	v_{n,\,\text{sin}} &= r^n\sin(n\, \theta)\,.
\end{align*}
where $n\in\NN$, where $r\in [0,\infty)$ denotes the radius from the origin, and $\theta\in [0,2\pi)$ the angle. Then we write the evaluation of all testfunctions into the columns of $\mathbf{F}$ and analogous $\mathbf{F_{\del}}$, where
\begin{align*}
	\del_x v_{n,\,\text{cos}} &= n\, r^{n-1}\big(
		\cos(\theta)\cos(n\, \theta) + \sin(\theta)\sin(n\, \theta)\big)\,,\\ 
	\del_y v_{n,\,\text{cos}} &= n\, r^{n-1}\big(
		\sin(\theta)\cos(n\, \theta) - \cos(\theta)\sin(n\, \theta)\big)\,,\\
	\del_x v_{n,\,\text{sin}} &= n\, r^{n-1}\big(
		\cos(\theta)\sin(n\, \theta) - \sin(\theta)\cos(n\, \theta)\big)\,,\\ 
	\del_y v_{n,\,\text{sin}} &= n\, r^{n-1}\big(
		\sin(\theta)\sin(n\, \theta) + \cos(\theta)\cos(n\, \theta)\big)\,.
\end{align*}
Using the QR decomposition we obtain an matrix $\mathbf{B}$ and an upper triangular matrix $\mathbf{R}$ such that $(\sqrt{\Sigma}\,\mathbf{B}) \, \mathbf{R} = (\sqrt{\Sigma}\,\mathbf{F})$, and $(\sqrt{\Sigma}\,\mathbf{B})^\TransT (\sqrt{\Sigma}\,\mathbf{B})$ is an identity matrix. Then we define $\mathbf{B_{\del}} = \mathbf{F_{\del}} \,\mathbf{R}^{-1}$. Then we have that 
\begin{align*}
	\mathbf{B_{\del}}^\TransT\,{\Sigma}\,\mathbf{s}_i = \lambda_i\,\mathbf{B}^\TransT\,{\Sigma}\,\mathbf{s}_i\,.
\end{align*}
We can further use that $(\sqrt{\Sigma}\,\mathbf{B}) (\sqrt{\Sigma}\,\mathbf{B})^\TransT$ is also the identity matrix and obtain
\begin{align*}
	(\mathbf{B}\,\mathbf{B_{\del}}^\TransT\,{\Sigma} )\,\mathbf{s}_i = \lambda_i\,\mathbf{s}_i\,.
\end{align*}
We solve this eigenvalue problem with the MATLAB \cite{MATLAB} in-build function \verb+eigs+. 

The disadvantage to method 2 is that the term $r^n$ in $v_{n,\,\text{cos}},	v_{n,\,\text{sin}}$ can reach extreme values for $n>50$, which might lead to numerical error. This is especially the case, when the radius varies strongly on the boundary.
\vspace{0.2cm}

\underline{Third method (Collocation Method)}

The third method relies on the collocation method presented in \cite[Chapter 13]{kress2013linear}. Let $(\mathcal{L}_i)_{i=1}^{\infty}$ be the Lagrange basis for the trigonometric interpolation on the boundary to the nodes $(z_l)_{l=1}^L = \big(z(\theta_l)\big)_{l=1}^L\subset\del\Om$, that is, we can represent the density on the boundary $\phi(t)\DEF\phi(z(t))\in L^2(\del\Om)$, for $t\in(-\pi,\pi]$ through
\begin{align*}
	\phi(t) = \sum_{i=1}^{L} \gamma_i\, \mathcal{L}_i(t)\,,
\end{align*}
such that $\phi$ is $2\pi$-periodic and $\phi(\theta_l)=\gamma_l$, for all $l=1,\ldots,L$. This resembles the formulation given through Equation (\ref{equ:phiExpSum}). Then we apply the Steklov boundary equation with boundary layer potentials, that is $(-\tfrac{1}{2}\mathrm{I}+\mathcal{K}^\ast)[\phi_i] = \lambda_i\,\mathcal{S}[\phi_i]$. This requires the evaluation of integrals of the form
\begin{align*}
	\int_{\del\Om} K(\tau,t)\phi(t)\intd t\,.
\end{align*}
We use the Lagrange basis for the trigonometric interpolation on $K(\tau,\cdot)$. This leads to the evaluation of the integrals
\begin{align*}
	\int_{-\pi}^\pi \mathcal{L}_i(t)\, \mathcal{L}_j(t) \intd t\,.
\end{align*}
In case the kernel function $K(\tau,\cdot)$ has a logarithmic singularity, we extract the singularity and this leads to the evaluation of the integrals 
\begin{align*}
	\int_{-\pi}^\pi \log\big(\sin\big(\tfrac{t}{2}\big)^2\big)\mathcal{L}_i(t)\, \mathcal{L}_j(t) \intd t\,.
\end{align*}
The generalized eigenvalue problem is then again solved with \verb+eigs+. To obtain then the Steklov functions $s_i$, we apply the single layer potential on the density, that is we compute $\mathcal{S}[\phi_i]$, using again the Lagrange basis for the trigonometric interpolation.

Method three, that is the collocation method, usually performs worse than method 1, the collocation method, but method three does not require a conformal map. Furthermore, method three widely outperforms method 2. One downside to method three is that it does not work on polygonal domains, without mayors modification on the evaluation of the integrals involved. 

\subsection{Numerical Tests}
We test Theorem \ref{thm:main} on our first domain, which is expressed through the image of the conformal map $f:\omega \mapsto \sin(\omega)$, on the function domain $\{\omega\in\CC | \,|\omega|=1\}$. The image is depicted in Fig. \ref{fig:sinDomain}.
\begin{figure}
    \center
    \includegraphics[width=0.6\textwidth]{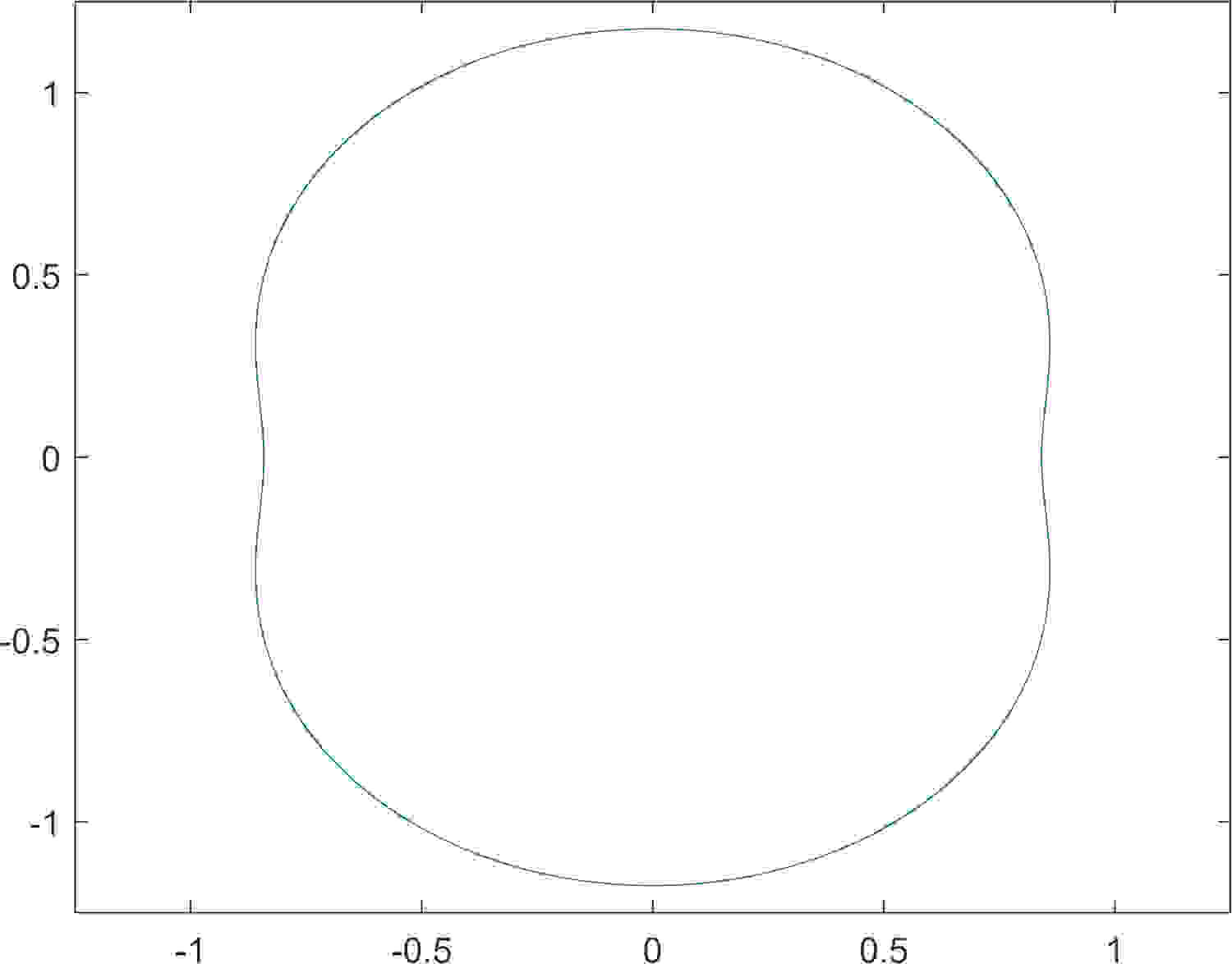}
    \caption{The image of the conformal map $f:\omega \mapsto \sin(\omega)$, for $|\omega|=1$, on the complex map.}\label{fig:sinDomain}
\end{figure}
Method 1 in the previously discussed numerical implementation with $L=2^{11}$, yields the Steklov eigenvalues given in Table \ref{tab:sinDomainEVals}. They agree with the eigenvalue obtained with method 2 up to an error $10^{-4}$ with 81 testfunctions and with method 3 up to an error $10^{-12}$.
\begin{table}[]
\begin{tabular}{|l l |l l | l l|}
0. & 0 &	 		     10.& 4.75126661249149 & 20. & 9.41724459940020\\
1. & 0.75224246625831 &  11.& 5.62207611654998 & 21. & 10.3475633749242\\
2. & 1.16409224301375 &  12.& 5.67424837547389 & 22. & 10.3565781261533\\
3. & 1.78027007332070 &  13.& 6.57012429497931 & 23. & 11.2899109864248\\ 
4. & 1.98491915625123 &  14.& 6.60890022153768 & 24. & 11.2960071742002\\
5. & 2.74939389918074 &  15.& 7.51651018262098 & 25. & 12.2318451677593\\
6. & 2.92299487296403 &  16.& 7.54208571185139 & 26. & 12.2362133827355\\
7. & 3.71499584614978 &  17.& 8.46107755518764 & 27. & 13.1735418905728\\
8. & 3.82029302327986 &  18.& 8.47973069883999 & 28. & 13.1765144903792\\
9. & 4.66997330559431 &  19.& 9.40475003802314 & 29. & 14.1150383379212
\end{tabular}
\caption{First 30 Steklov eigenvalues of the domain given in Fig. \ref{fig:sinDomain}.}\label{tab:sinDomainEVals}
\end{table}

We test Theorem \ref{thm:main} on our second domain, which is a kite shape form which is expressed through the formula 
\begin{align}\label{equ:kiteDomain}
	\begin{bmatrix}
    	\cos(\theta)+0.65\cos(2\,\theta)-0.65\\ 1.5\sin(\theta)   
	\end{bmatrix}\,,
\end{align}
for $\theta\in [0,2\pi)$. 
\begin{figure}
    \center
    \includegraphics[width=0.6\textwidth]{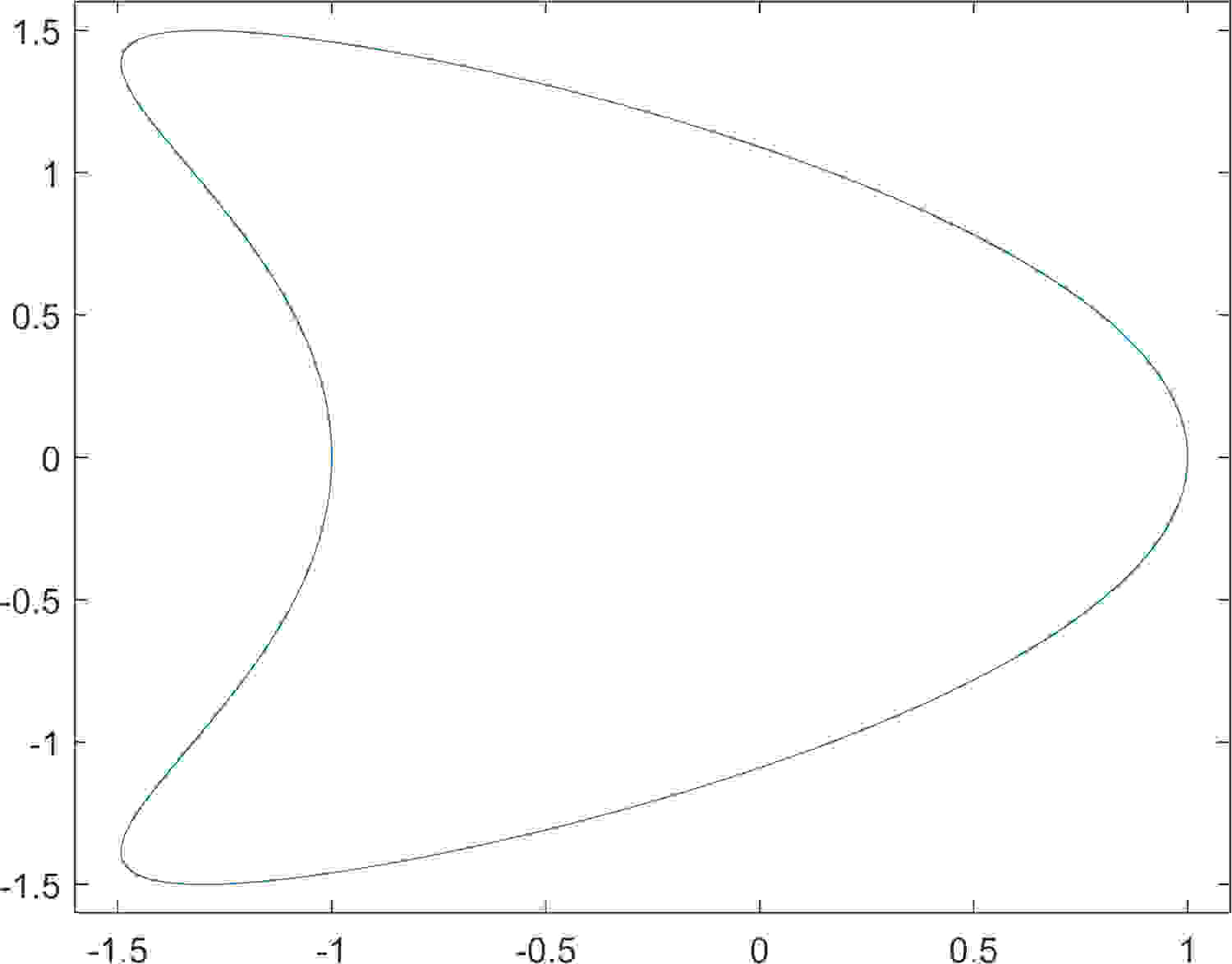}
    \caption{The kite shaped domain given by Equation (\ref{equ:kiteDomain}).}\label{fig:kiteDomain}
\end{figure}
Method 3 yields the Steklov eigenvalues given in Table \ref{tab:kiteEVals}.
\begin{table}[]
\begin{tabular}{|l l |l l | l l|}
0. & 0     			  &	 10.& 3.68146658262641 & 20. & 6.85521397993749\\
1. & 0.35414802795542 &  11.& 3.93065445612738 & 21. & 7.36643424104065\\
2. & 0.61788332452304 &  12.& 4.35126956149157 & 22. & 7.64210790405525\\
3. & 1.40104403386069 &  13.& 4.73445012960648 & 23. & 7.92286021006587\\ 
4. & 1.50919410730056 &  14.& 4.85934470703683 & 24. & 8.31088119268709\\
5. & 2.08851865665672 &  15.& 5.36316304266106 & 25. & 8.73881939891210\\
6. & 2.27367975961909 &  16.& 5.70611024861326 & 26. & 8.84738018807641\\
7. & 2.80793641242654 &  17.& 5.86271680048617 & 27. & 9.38887482455907\\
8. & 2.86305939312361 &  18.& 6.35291749840296 & 28. & 9.62242746116134\\
9. & 3.39265638962772 &  19.& 6.70357248994343 & 29. & 10.0138463423938
\end{tabular}
\caption{First 30 Steklov eigenvalues of the kite shaped domain.}\label{tab:kiteEVals}
\end{table}


We test Theorem \ref{thm:main} on our third domain, which is a perturbed circle which is expressed through the formula 
\begin{align}\label{equ:PerturbedDom}
	\begin{bmatrix}
    	|\theta-\pi|^3\mathrm{e}^{-2\,(\theta-\pi)^2}\cos(\theta)\\ \sin(\theta)   
	\end{bmatrix}\,,
\end{align}
for $\theta\in [0,2\pi)$. The boundary of this domain is in $C^2$ but not in $C^3$. 
\begin{figure}
	\center
    \includegraphics[width=0.6\textwidth]{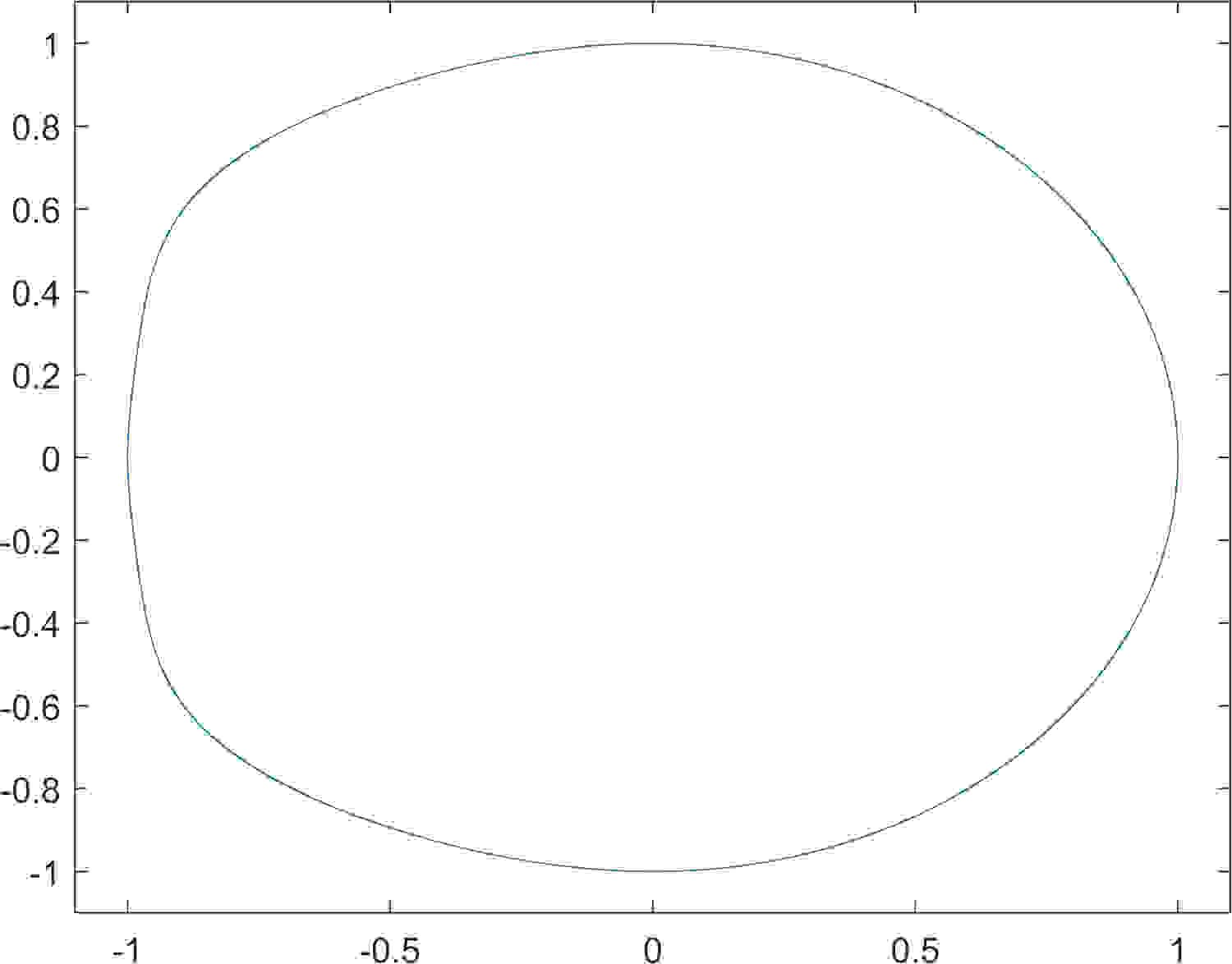}
    \caption{The perturbed circle shaped domain given by Equation (\ref{equ:PerturbedDom}).}\label{fig:PerturbedDom}
\end{figure}
Method 2 yields the Steklov eigenvalues given in Table \ref{tab:pertCircleEVals}.
\begin{table}[]
\begin{tabular}{|l l |l l | l l|}
0. & 0     			  &	 10.& 4.96027339646894 & 20. & 9.82373355092222\\
1. & 0.97166458952976 &  11.& 5.86819830291099 & 21. & 10.7966481422483\\
2. & 0.98562205634898 &  12.& 5.92246463679941 & 22. & 10.8052793010619\\
3. & 1.87253912164287 &  13.& 6.87277981374903 & 23. & 11.7803681618299\\ 
4. & 2.04290340847734 &  14.& 6.87802102564589 & 24. & 11.7852662151779\\
5. & 2.89726174456174 &  15.& 7.85289271831483 & 25. & 12.7634424475282\\
6. & 3.02554535119315 &  16.& 7.85942895389139 & 26. & 12.7659283729440\\
7. & 3.90257998079881 &  17.& 8.83653915707903 & 27. & 13.7455269458373\\
8. & 3.95874854028104 &  18.& 8.83849677763925 & 28. & 13.7476110533295\\
9. & 4.86273725468972 &  19.& 9.81456284300081 & 29. & 14.7272796398592
\end{tabular}
\caption{First 30 Steklov eigenvalues of the perturbed circle shaped domain.}\label{tab:pertCircleEVals}
\end{table}


We test Theorem \ref{thm:main} on our forth domain, which is a square $[-1,1]\times[-1,1]$. It is depicted in Fig. \ref{fig:squareDomain}.
\begin{figure}
    \center
    \includegraphics[width=0.6\textwidth]{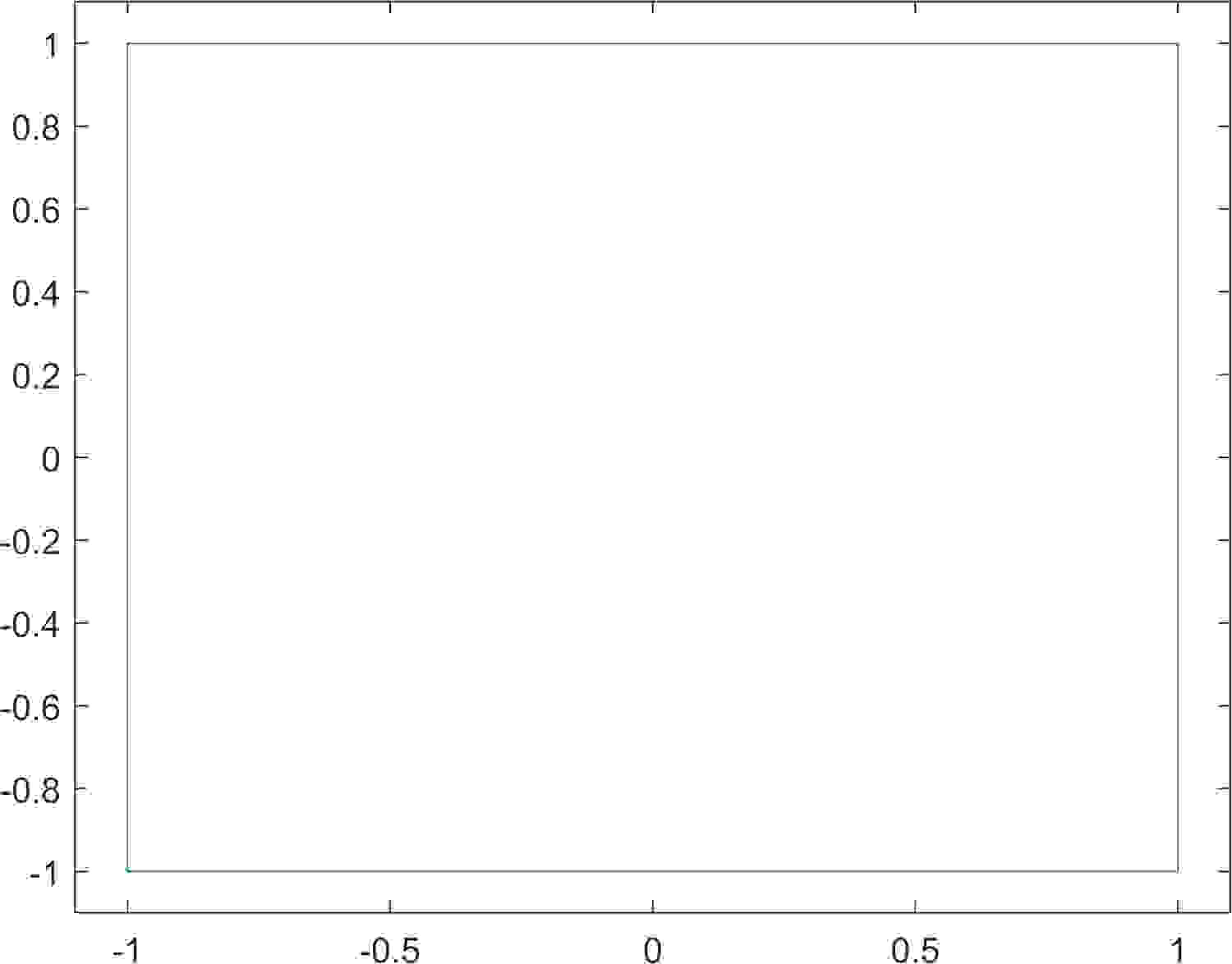}
    \caption{The forth domain, the square $[-1,1]\times[-1,1]$.}\label{fig:squareDomain}
\end{figure}
Here we are using neither of the three methods, instead we use the exact expressions for the eigenvalues and eigenvectors, which can be found in \cite{AuchmutyCho}. The first 30 Steklov eigenvalues are given in Table \ref{tab:squareEVals}.
\begin{table}[]
\begin{tabular}{|l l |l l | l l|}
0. & 0     			  &	 10.& 3.92965450678018 & 20. & 8.63937928739407\\
1. & 0.68825274233626 &  11.& 3.92965450678018 & 21. & 8.63937928739407\\
2. & 0.68825274233626 &  12.& 5.49761946836883 & 22. & 8.63938030734989\\
3. & 1                &  13.& 5.49761946836883 & 23. & 8.63938030734989\\ 
4. & 2.32363775343172 &  14.& 5.49795483551074 & 24. & 10.2101760978756\\
5. & 2.32363775343172 &  15.& 5.49795483551074 & 25. & 10.2101760978756\\
6. & 2.39038920510582 &  16.& 7.06857394684416 & 26. & 10.2101761504581\\
7. & 2.39038920510582 &  17.& 7.06857394684416 & 27. & 10.2101761504581\\
8. & 3.92433302324475 &  18.& 7.06859299435029 & 28. & 11.7809724496418\\
9. & 3.92433302324475 &  19.& 7.06859299435029 & 29. & 11.7809724496418
\end{tabular}
\caption{First 30 Steklov eigenvalues of the square domain $[-1,1]\times[-1,1]$.}\label{tab:squareEVals}
\end{table}
\vspace{0.2cm}

\underline{The Dirichlet Problem}\\
Considering the Dirichlet problem, we use four boundary function to test Theorem \ref{thm:main}, those are $g(\theta) = \exp(\cos(\theta))$, $g(\theta) = |\theta-\pi|$, $g(\theta) = \sqrt{|\theta-\pi|}$ and $g(\theta) = \mathbbm{1}_{\theta\geq \pi}-\mathbbm{1}_{\theta < \pi}$, for $\theta\in [0,2\pi)$ denoting the angle. The number of nodes $L$ is $2^{11}$ and the nodes are equidistant distributed $[0,2\pi)$.

For the sine mapped domain, Fig. \ref{fig:kiteDomain}, the corresponding numerical asymptotics of the $L^2(\del\Om)$ norm in the amount of eigenvalues $M$ used are shown in Figure \ref{fig:DiriErrorFigSine}. 
\begin{itemize}
\item For $g(\theta) = \exp(\cos(\theta))$ we have an exponential decay,
\item for $g(\theta) = |\theta-\pi|$ we have the asymptotic $\OO(M^{-3/2})$,
\item for $g(\theta) = \sqrt{|\theta-\pi|}$ we have the asymptotic $\OO(M^{-1})$, and
\item for $g(\theta) =  \mathbbm{1}_{\theta\geq \pi}-\mathbbm{1}_{\theta < \pi}$ we have the asymptotic $\OO(M^{-0.5})$.
\end{itemize}


For the kite shaped domain, Fig. \ref{fig:kiteDomain}, the corresponding numerical asymptotics of the $L^2(\del\Om)$ norm in the amount of eigenvalues $M$ used are shown in Figure \ref{fig:DiriErrorFigKite}. 
\begin{itemize}
\item For $g(\theta) = \exp(\cos(\theta))$ we have an exponential decay,
\item for $g(\theta) = |\theta-\pi|$ we have the asymptotic $\OO(M^{-3/2})$,
\item for $g(\theta) = \sqrt{|\theta-\pi|}$ we have the asymptotic $\OO(M^{-1})$, and
\item for $g(\theta) =  \mathbbm{1}_{\theta\geq \pi}-\mathbbm{1}_{\theta < \pi}$ we have the asymptotic $\OO(M^{-0.5})$.
\end{itemize}


For the perturbed circle domain, Fig. \ref{fig:PerturbedDom}, the corresponding numerical asymptotics of the $L^2(\del\Om)$ norm in the amount of eigenvalues $M$ used are shown in Figure \ref{fig:DiriErrorFigPertCircle}.
\begin{itemize}
\item For $g(\theta) = \exp(\cos(\theta))$ we have the asymptotic $\OO(M^{-3.5})$,
\item for $g(\theta) = |\theta-\pi|$ we have the asymptotic $\OO(M^{-3/2})$,
\item for $g(\theta) = \sqrt{|\theta-\pi|}$ we have the asymptotic $\OO(M^{-1})$, and
\item for $g(\theta) =  \mathbbm{1}_{\theta\geq \pi}-\mathbbm{1}_{\theta < \pi}$ we have the asymptotic $\OO(M^{-0.5})$.
\end{itemize}


For the square domain, Fig. \ref{fig:squareDomain}, the corresponding numerical asymptotics of the $L^2(\del\Om)$ norm in the amount of eigenvalues $M$ used are shown in Figure \ref{fig:DiriErrorFigSquare}. 
\begin{itemize}
\item For $g(\theta) = \exp(\cos(\theta))$ we have the asymptotic $\OO(M^{-3/2})$,
\item for $g(\theta) = |\theta-\pi|$ we have the asymptotic $\OO(M^{-3/2})$ too,
\item for $g(\theta) = \sqrt{|\theta-\pi|}$ we have the asymptotic $\OO(M^{-1})$, and
\item for $g(\theta) =  \mathbbm{1}_{\theta\geq \pi}-\mathbbm{1}_{\theta < \pi}$, where the discontinuities do not happen at the corners, we have the asymptotic $\OO(M^{-0.5})$.
\end{itemize}
\vspace{0.2cm}

\underline{The Robin Problem}\\
Considering the Robin problem, we use the same four domains to test Theorem \ref{thm:main}, but we use the same function. That function is $g(x)=\tfrac{1}{2}\log(|x-y|^2)$ for $x\in\del\Om$ and $y\not\in\overline{\Om}$, where we specifically have chosen $y=(0,2.5)^\TransT$. We then have that
\begin{align*}
	\del_{\nu_x}g(x) = \frac{\nu_x\cdot(x-y)}{|x-y|^2}\,.
\end{align*} 
The Robin constant $b$ is set to $1.5$, The number of nodes $L$ is $2^{11}$ and the nodes are equidistant distributed on $[0,2\pi)$. The corresponding numerical asymptotics of the $L^2(\del\Om)$ norm in the amount of eigenvalues and eigenfunctions used are shown in Figure \ref{fig:RobinErrorFig}. 
\begin{itemize}
\item For the sine mapped domain (Fig. \ref{fig:sinDomain}) we have an exponential decay,
\item for the kite shaped domain (Fig. \ref{fig:kiteDomain}) we have an exponential decay,
\item for the perturbed circle domain (Fig. \ref{fig:PerturbedDom}) we have the asymptotic $\OO(M^{-3.5})$, and
\item for the square (Fig. \ref{fig:squareDomain}) we have the asymptotic $\OO(M^{-1.5})$.
\end{itemize}

Next we used the following non-smooth function 
\begin{align*}
	g_0(x_1,x_2) =&\pi x_2 
        +((x_1+1)^2 + x_2^2)\log(x_1^2+x_2^2)\\
        &+2\, x_2 \tan^{-1}\Big(\frac{x_1}{x_2}\Big)
        -\frac{(x_1+1)\log(x_1^2+x_2^2)}{(x_1+1)^2 + x_2^2}\,.
\end{align*}
$g_0(x)$ is harmonic for $|x_2|>0$. It is not smooth around $x_1=0, x_2=0$.
Then we define the function $g(x) = g_0(x-y)$ where $y$ is a point on the boundary of the applied domain. 
The Robin constant $b$ is set to $1.5$, The number of nodes $L$ is $2^{11}$ and the nodes are equidistant distributed on $[0,2\pi)$. The corresponding numerical asymptotics of the $L^2(\del\Om)$ norm in the amount of eigenvalues and eigenfunctions used are shown in Figure \ref{fig:RobinErrorFigNonSmoothData}.
\begin{itemize}
\item For the sine mapped domain (Fig. \ref{fig:sinDomain}) we have the asymptotic $\OO(M^{-1.5})$,
\item for the kite shaped domain (Fig. \ref{fig:kiteDomain}) we suppose we have the asymptotic $\OO(M^{-1.5})$,
\item for the perturbed circle domain (Fig. \ref{fig:PerturbedDom}) we have the asymptotic $\OO(M^{-1.5})$, and
\item for the square (Fig. \ref{fig:squareDomain}) we did not get good results.
\end{itemize}
\vspace{0.2cm}

\underline{Considering the $L^2(\Omega)$ norm}\\
We evaluate the $L^2(\Omega)$ norm of the error between the exact function and the series expansion in terms of Steklov functions. To this end we need the Steklov functions evaluated on the boundary. We achieve this using Green's identity, we then obtain that
\begin{align*}
	s_i(x) = \int_{\del\Om} \del_{\nu}\Gamma(x,y)\, s_i(y)\intd \sigma_y
			  -\lambda_i\int_{\del\Om} \Gamma(x,y)\, s_i(y)\intd \sigma_y\,,
\end{align*} 
where $x\in\Om$, $y\in\del\Om$, and $\Gamma(x,y)= \frac{1}{2\pi}\log(|x-y|)$ is the fundamental solution to the Laplace problem. The numerical evaluation of $s_i(x)$ is not very accurate when $x$ is close to the boundary. Thus we have a threshold, which is 8 times the distance between two neighbouring nodes on the boundary.
For the numerical evaluation of the $L^2(\Omega)$ norm we apply the polar transformation for the first three domains and we evaluate $s_i(x)$ exactly inside the square domain using an equidistant grid. The transformation has the form
\begin{align*}
	\int_\Om F(x,y)\intd (x,y) = \int_0^1\int_0^{2\pi} F(\phi(r,\theta))|\det\mathrm{D}\phi(r,\theta)|\intd \theta\intd r\,,
\end{align*} 
where $F: \Om \rightarrow [0,\infty)$, $\phi: [0,1]\times[0,2\pi]\rightarrow \Om$, and $\mathrm{D}$ denotes the Jacobian matrix. We define $\phi$ through $\phi(r,\theta) = r\,f(\theta)$, where $f$ is given through $\sin(\exp(\imagi \theta))$ for the sine mapped domain, through Equation (\ref{equ:kiteDomain}) for the kite shaped domain and through Equation \ref{equ:PerturbedDom} for the perturbed domain. Then we use an grid, which is equidistant in each dimension, on the $(r,\theta)$ domain. 

We use $L=2^{13}$ number of nodes on the boundary and $2^6\times 2^9$ points on the $(r,\theta)$ domain. As exact comparing function we use the same as in the Robin problem, that is $g(x)=\tfrac{1}{2}\log(|x-(0,2.5)^\TransT|)$.
\\ \\
\noindent For the Dirichlet problem we have following results:
\begin{itemize}
\item For the sine mapped domain (Fig. \ref{fig:sinDomain}) we have an exponential decay,
\item for the kite shaped domain (Fig. \ref{fig:kiteDomain}) we have an exponential decay,
\item for the perturbed circle domain (Fig. \ref{fig:PerturbedDom}) we have the asymptotic $\OO(M^{-3.5})$, and
\item for the square (Fig. \ref{fig:squareDomain}) we have the asymptotic $\OO(M^{-2})$.
\end{itemize}
The corresponding numerical asymptotics of the $L^2(\del\Om)$ norm in the amount of eigenvalues and eigenfunctions used are shown in Figure \ref{fig:DomDiriErrorFig}.
\\ \\
For the Robin problem we have following results:
\begin{itemize}
\item For the sine mapped domain (Fig. \ref{fig:sinDomain}) we have an exponential decay,
\item for the kite shaped domain (Fig. \ref{fig:kiteDomain}) we have an exponential decay,
\item for the perturbed circle domain (Fig.\ref{fig:PerturbedDom}) we have the asymptotic $\OO(M^{-3.5})$, and
\item for the square (Fig. \ref{fig:squareDomain}) we have the asymptotic $\OO(M^{-2})$.
\end{itemize}
The corresponding numerical asymptotics of the $L^2(\del\Om)$ norm in the amount of eigenvalues and eigenfunctions used are shown in Figure \ref{fig:DomRobinErrorFig}.
\vspace{0.5cm}

\underline{Testing the Asymptotics on the Number $N$}
\vspace{0.2cm}\\
We test Theorem \ref{thm:main} in its statement about the number $N$ of terms in the series expansion for the numerical approximation. To this we apply the test functions $g(\theta) = \exp(\cos(\theta))$, $g(\theta) = |\theta-\pi|$ on the two domain given in Fig \ref{fig:sinDomain}, the sine-mapped domain, and Fig. \ref{fig:PerturbedDom} the perturbed circle shaped domain. We picked $M=20$, that is we numerically evaluate the error between the correct, theoretical, series expansion in terms of the Steklov eigenpair truncated to the first 20 Steklov eigenfunctions and between the numerical approximation with respect to $N$. Here we use throughout the Collocation method (Method 3), this implies $L=N$. We approximate the correct, theoretical, solution through an evaluation with $N=2400$.

We consider first the error on the boundary for the Dirichlet problem.
\begin{itemize}
\item For $g(\theta) = \exp(\cos(\theta))$ on the sine-mapped domain (Fig. \ref{fig:sinDomain}) we observe a very fast convergence. We assume we have here an exponential convergence.
\item For $g(\theta) = \exp(\cos(\theta))$ on the perturbed circle shaped domain domain (Fig \ref{fig:PerturbedDom}) we observe the asymptotic $\OO(N^{-4})$.
\item For $g(\theta) = |\theta-\pi|$ on the sine-mapped domain (Fig. \ref{fig:sinDomain}) we observe the asymptotic $\OO(N^{-2})$.
\item For $g(\theta) = |\theta-\pi|$ on the perturbed circle shaped domain domain (Fig. \ref{fig:PerturbedDom}) we observe the asymptotic $\OO(N^{-2})$.
\end{itemize}
The corresponding numerical asymptotics are depicted in Fig. \ref{fig:NErrorDiriBdry}.  We see that we have asymptotics in $N$ which depend on both, the regularity of the boundary and the regularity of the Robin boundary data. Theorem \ref{thm:main} states that the error is independent of the regularity of the Robin boundary data, however. We suppose this is due to the trapezoidal rule which we used for the term $\int_{\del\Om}g^{(D)}\tilde{s}_i$.
\section{Concluding Remarks}\label{sec:ConcludingRemarks}

In this paper we showed how we can use the Steklov eigenfunctions to approximate the solution to the Laplace equation with either Dirichlet or Robin boundary conditions. Using higher regularity results for layer potentials we proved that $s_i\in H^p(\del\Om)$ whenever $\del\Om\in H^{p+1/2}_{\text{loc}}$, this allowed us to conclude that the series expansion involving the Steklov eigenpairs approximates the solution to the homogeneous Laplace equation with an exponentially decreasing error given a smooth boundary. Furthermore, we provided a polynomial error approximation in terms of the smoothness of the boundary and in terms of the regularity of the boundary data. We then elaborated on three different methods to numerically compute the Steklov eigenfunctions, the conformal method, the weak form method and the collocation method. With those methods we were able to verify Theorem \ref{thm:main} on various domains and with various boundary data.

An interesting question arises here, namely can we smooth corners out and utilize the exponential fast approximation? Some preliminary tests on a square with rounded-off corners give a positive answer. Here we have to further consider how we can approximate the boundary data, when we smooth out corners. 

Another question is about higher dimensional application of the Steklov eigenfunctions. In this paper we only considered the two dimensional problem. Applying the results in \cite{AuchmutyFoundation} and \cite{HighRegLayerPotMazya}, we can show that Proposition \ref{prop:s_i in H^p} is also valid. For the theorem we further have to show that the regularity results for the Dirichlet to Neumann operator still hold, then we firmly believe that all the results in Theorem \ref{thm:main} are valid in higher dimensions. 

An other thought is about the applications of the Steklov eigenfunctions on different PDEs. We think that we have similar results on the homogeneous Helmholtz equation that is $(\Laplace + k^2)u=0$, for $k>0$. We are not sure about other PDE's like the Schroedinger-type operator \cite{AuchmutySchrodinger}.

\begin{figure}
    \includegraphics[width=.49\textwidth]{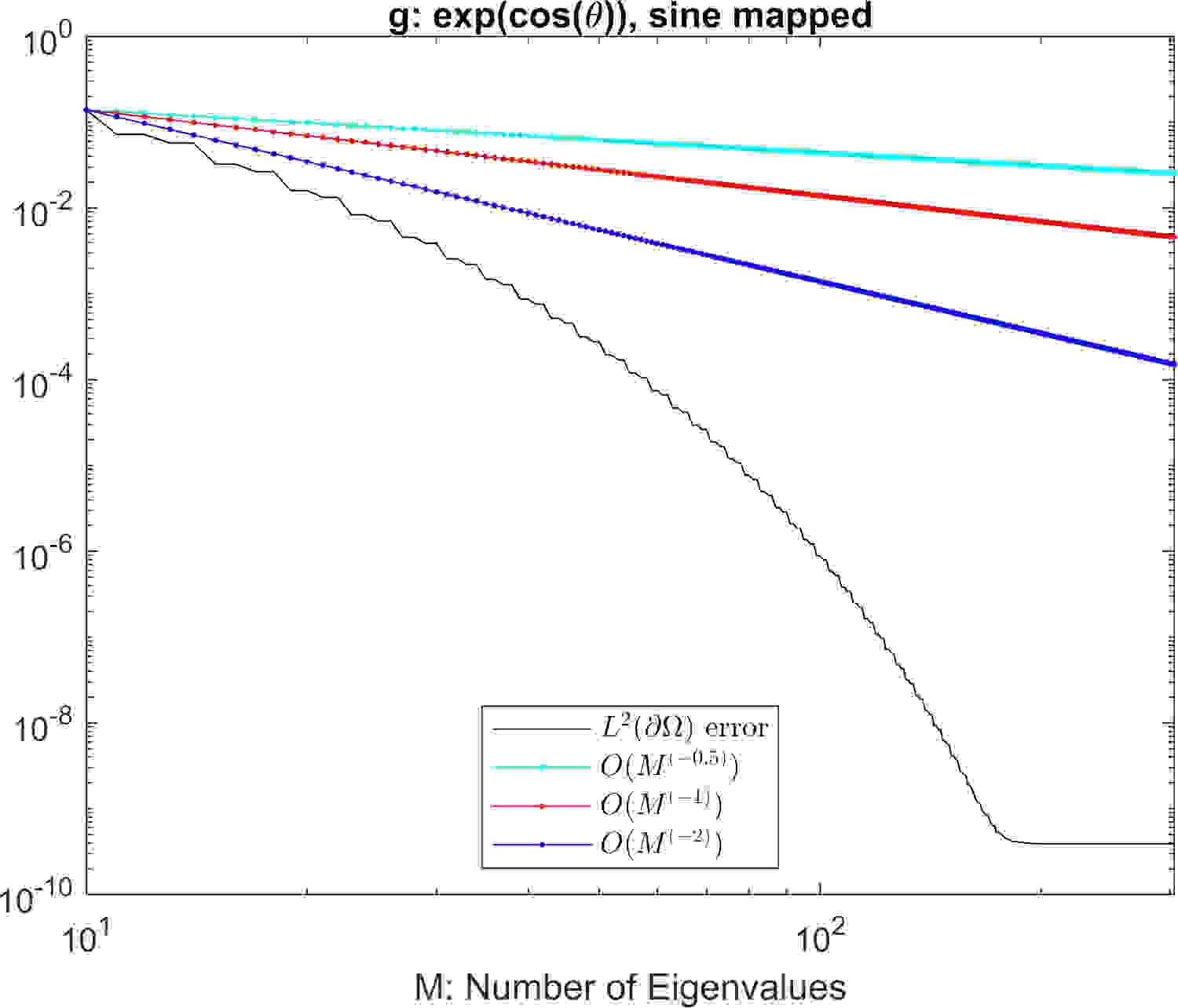}\hfill
    \includegraphics[width=.49\textwidth]{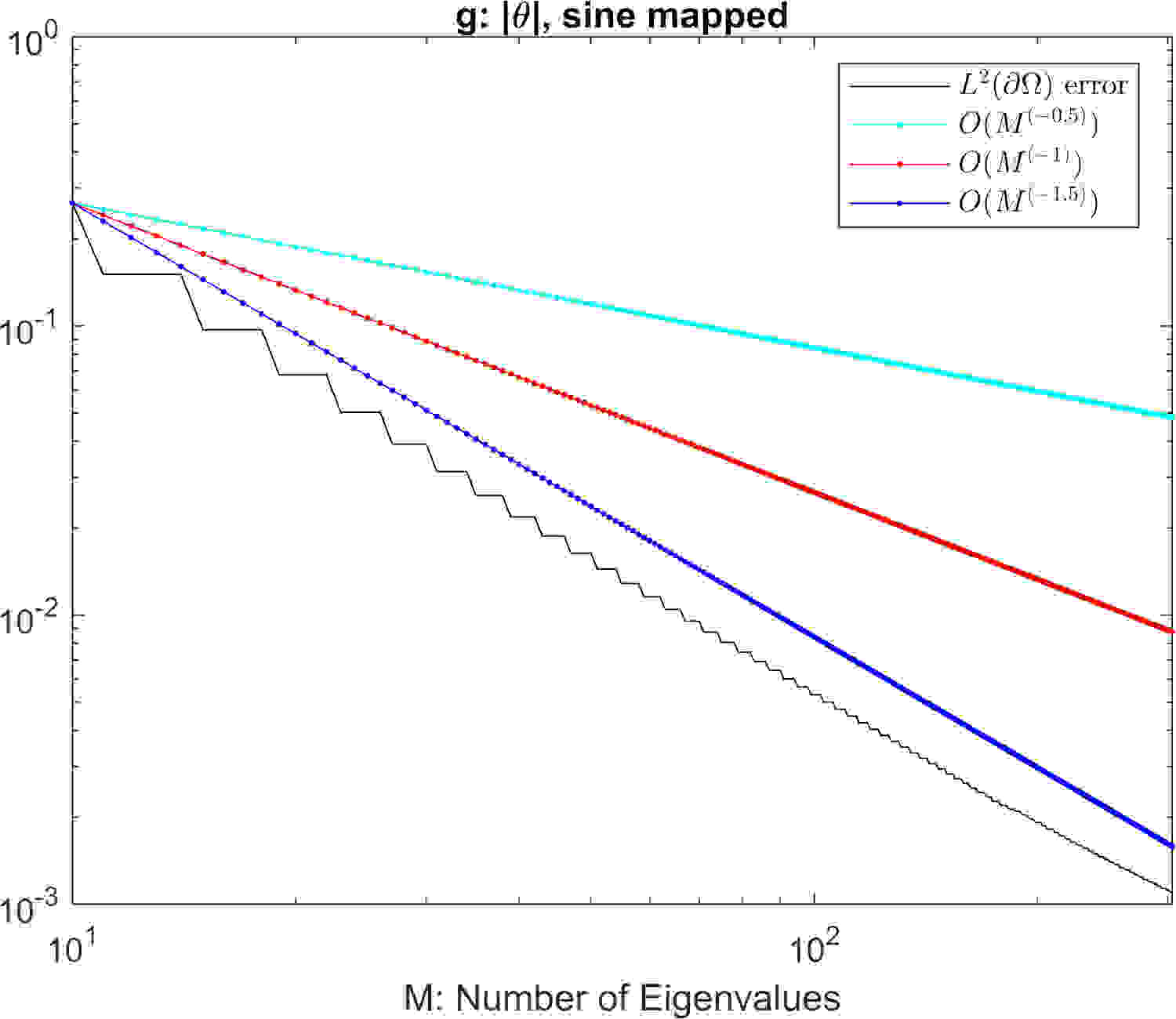}
    \\[\smallskipamount]
    \includegraphics[width=.49\textwidth]{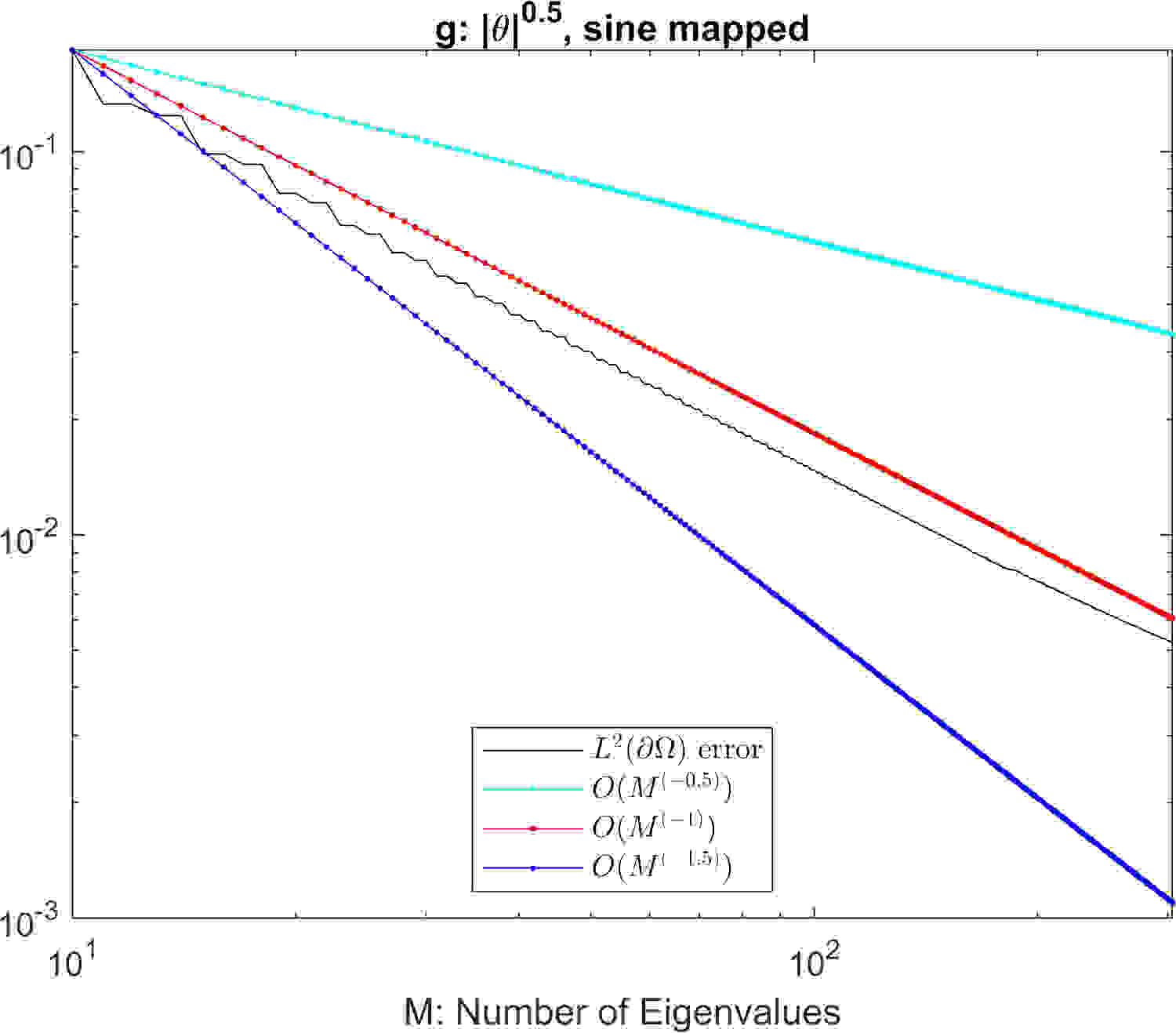}\hfill
    \includegraphics[width=.49\textwidth]{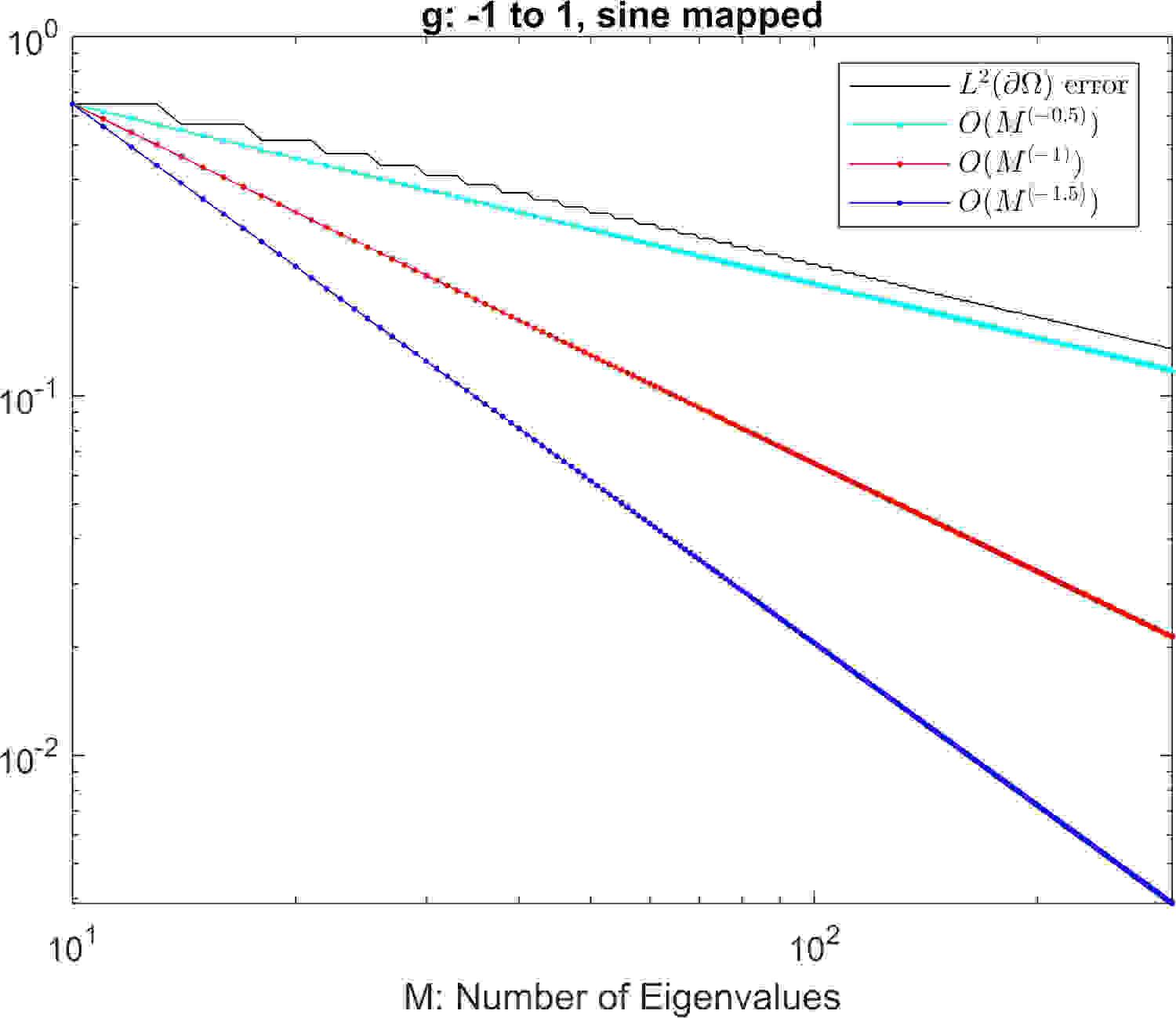}
    \caption{Error plots for the numerical approximation of the solution to the Dirichlet problem on the boundary through method one to four different boundary functions on the sine mapped domain, Fig \ref{fig:sinDomain}.}\label{fig:DiriErrorFigSine}
\end{figure}

\begin{figure}
    \includegraphics[width=.49\textwidth]{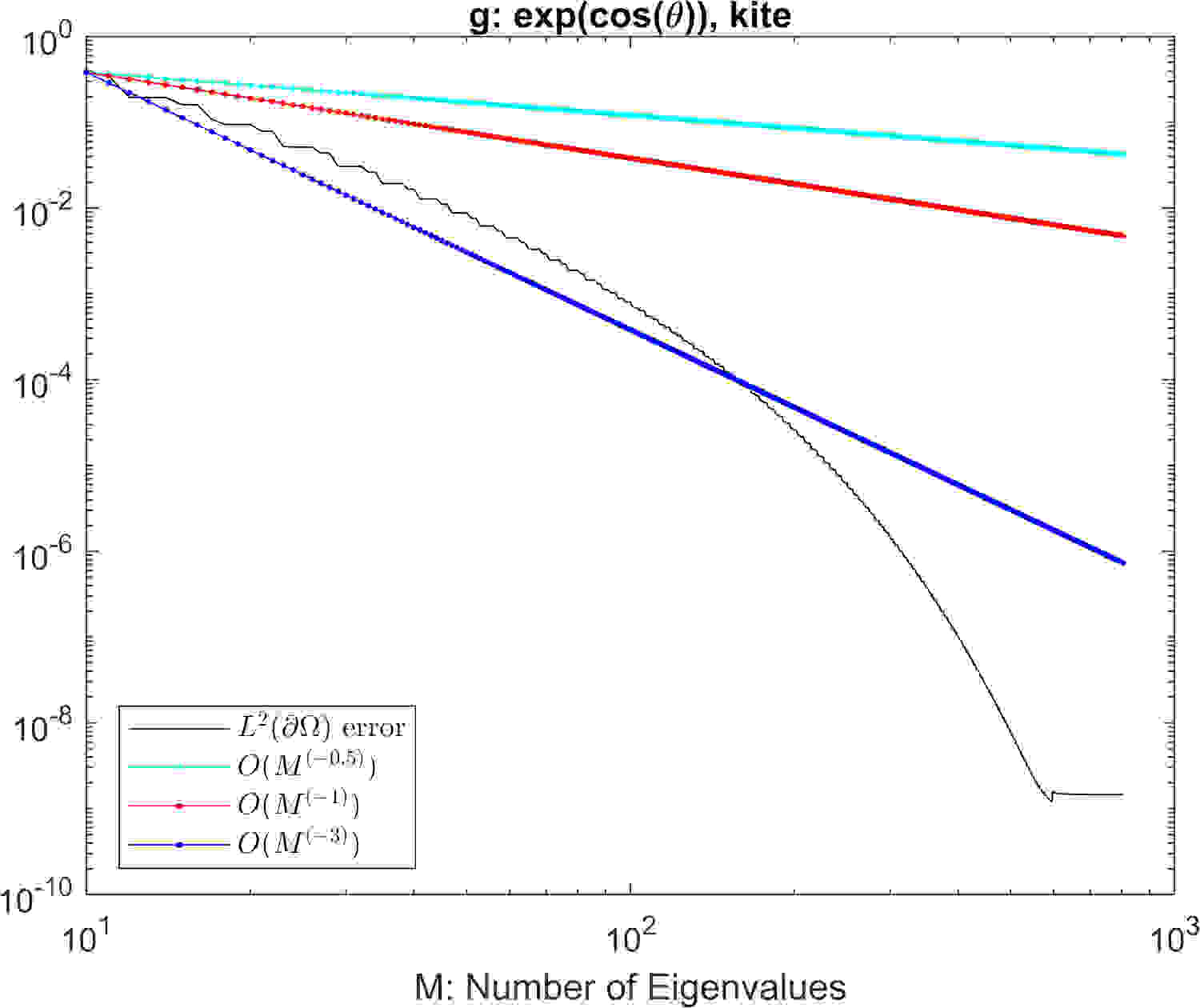}\hfill
    \includegraphics[width=.49\textwidth]{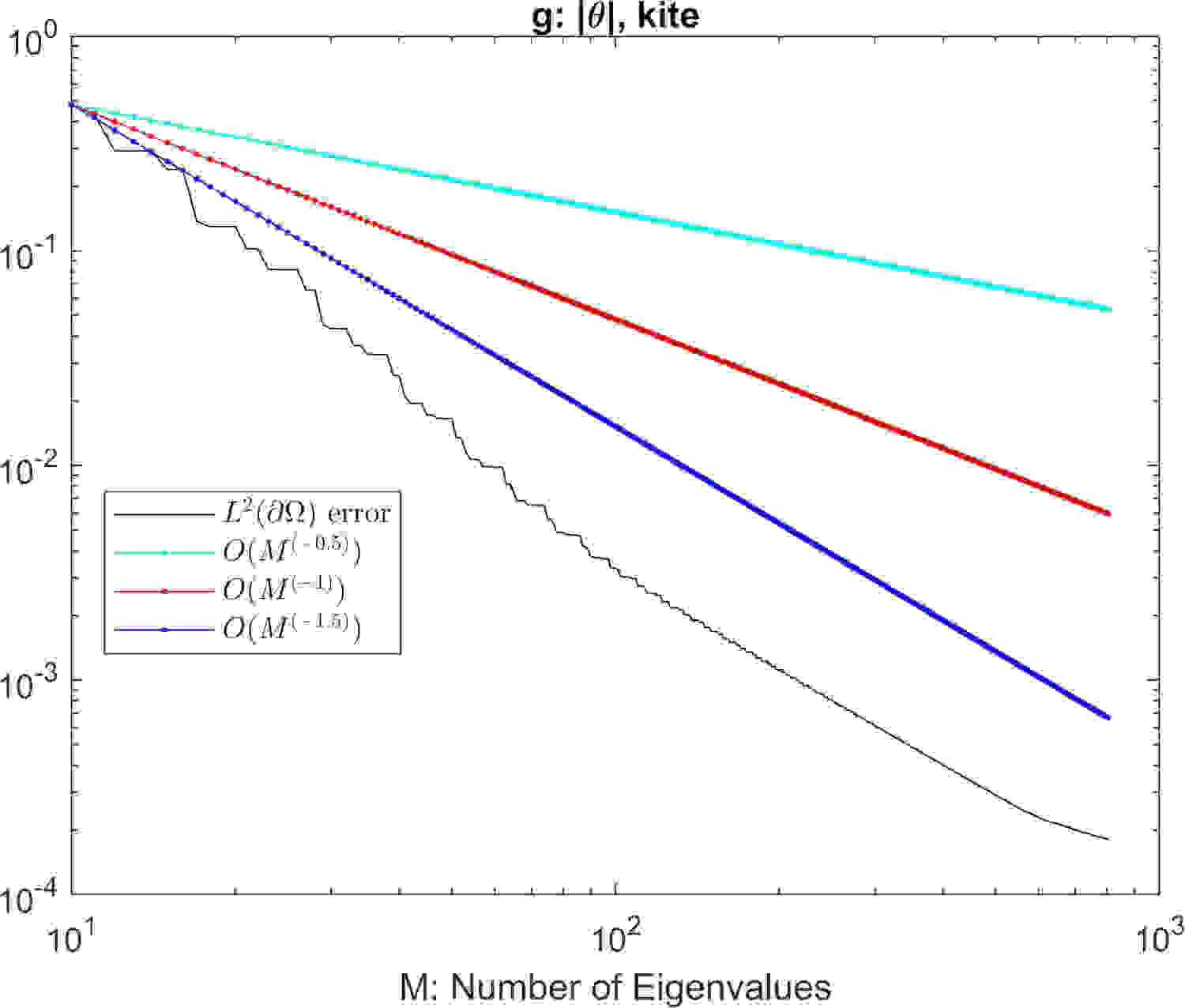}
    \\[\smallskipamount]
    \includegraphics[width=.49\textwidth]{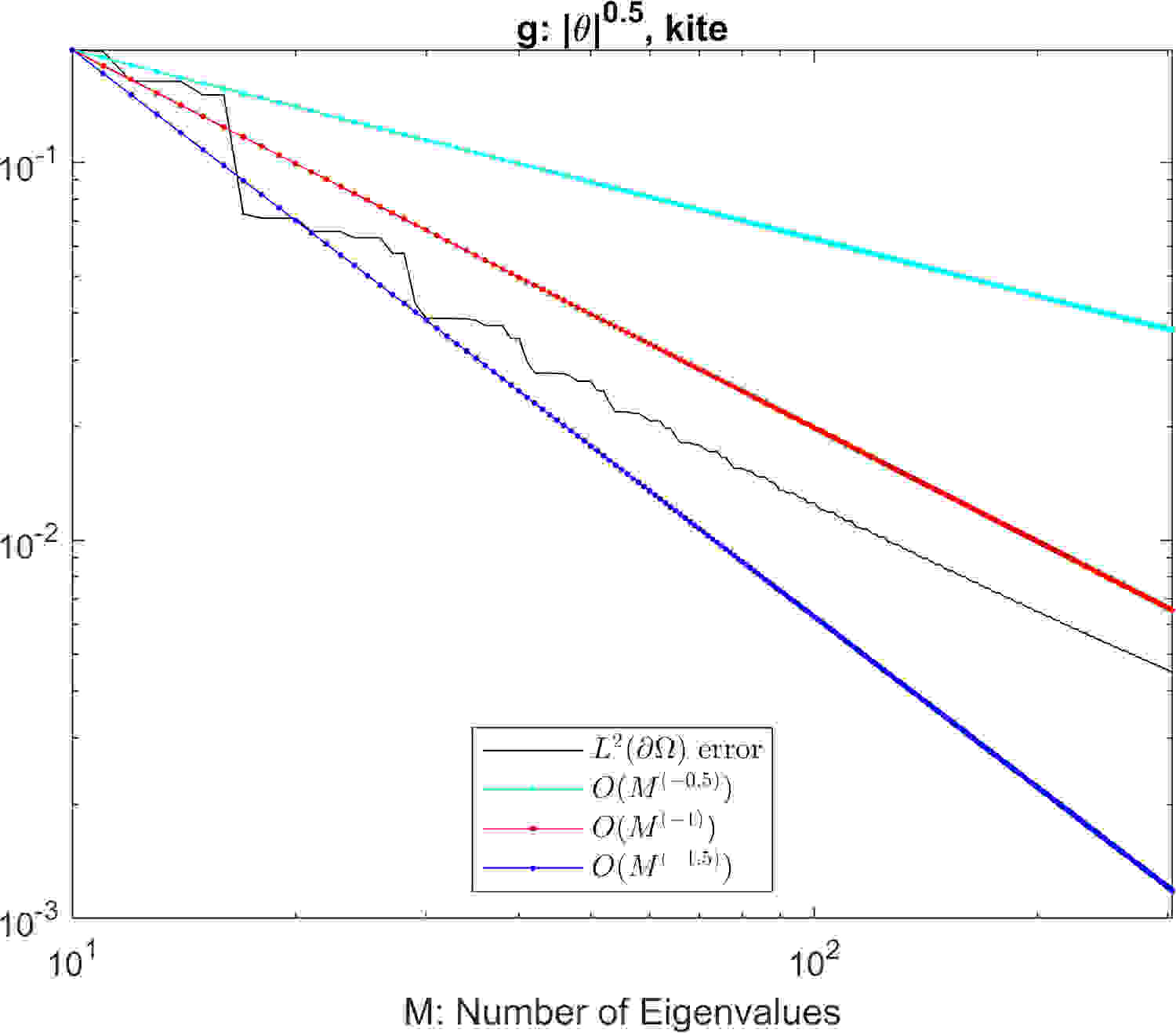}\hfill
    \includegraphics[width=.49\textwidth]{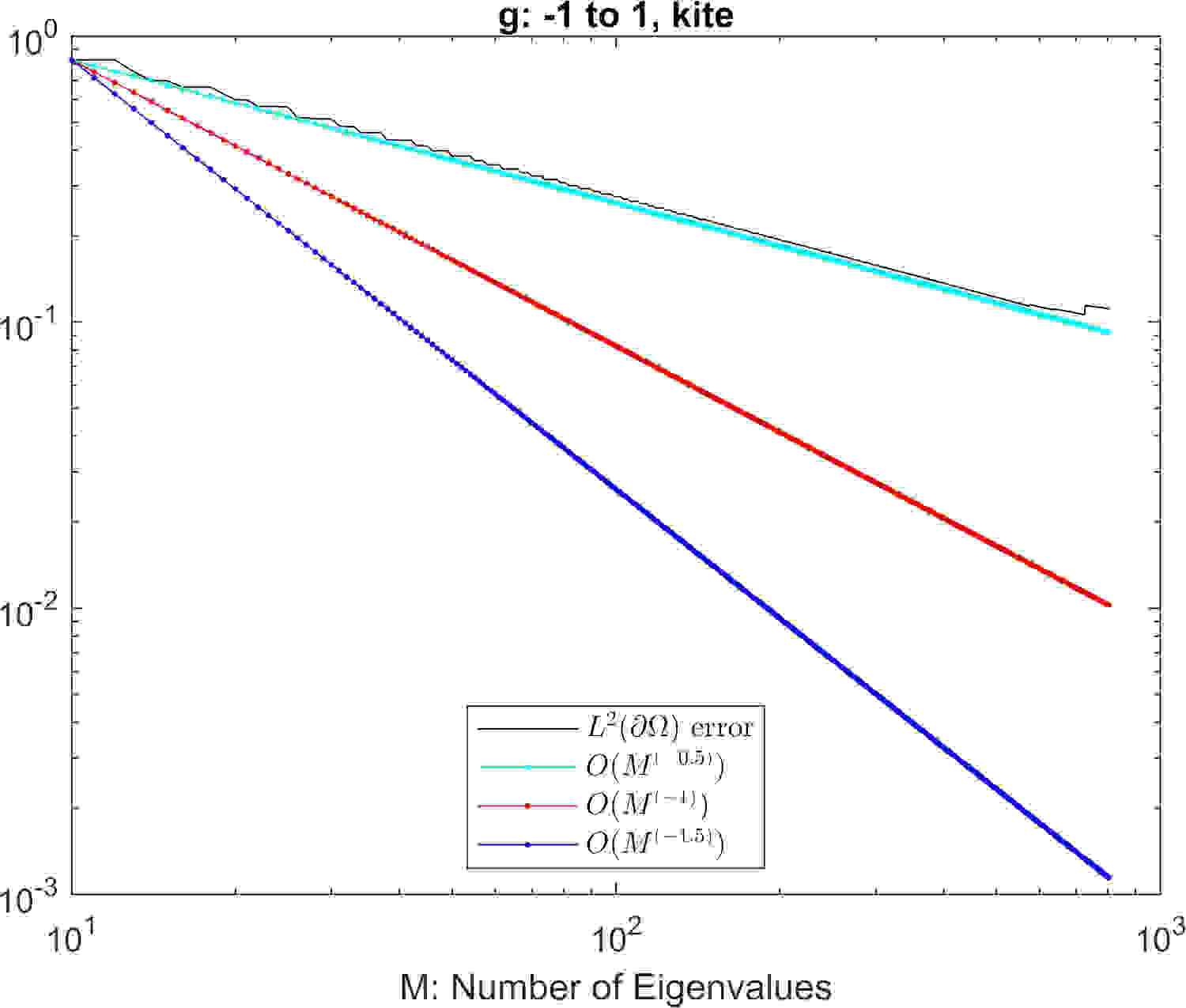}
    \caption{Error plots for the numerical approximation of the solution to the Dirichlet problem on the boundary to four different boundary functions on the kite shaped domain, Fig \ref{fig:kiteDomain}.}\label{fig:DiriErrorFigKite}
\end{figure}

\begin{figure}
    \includegraphics[width=.49\textwidth]{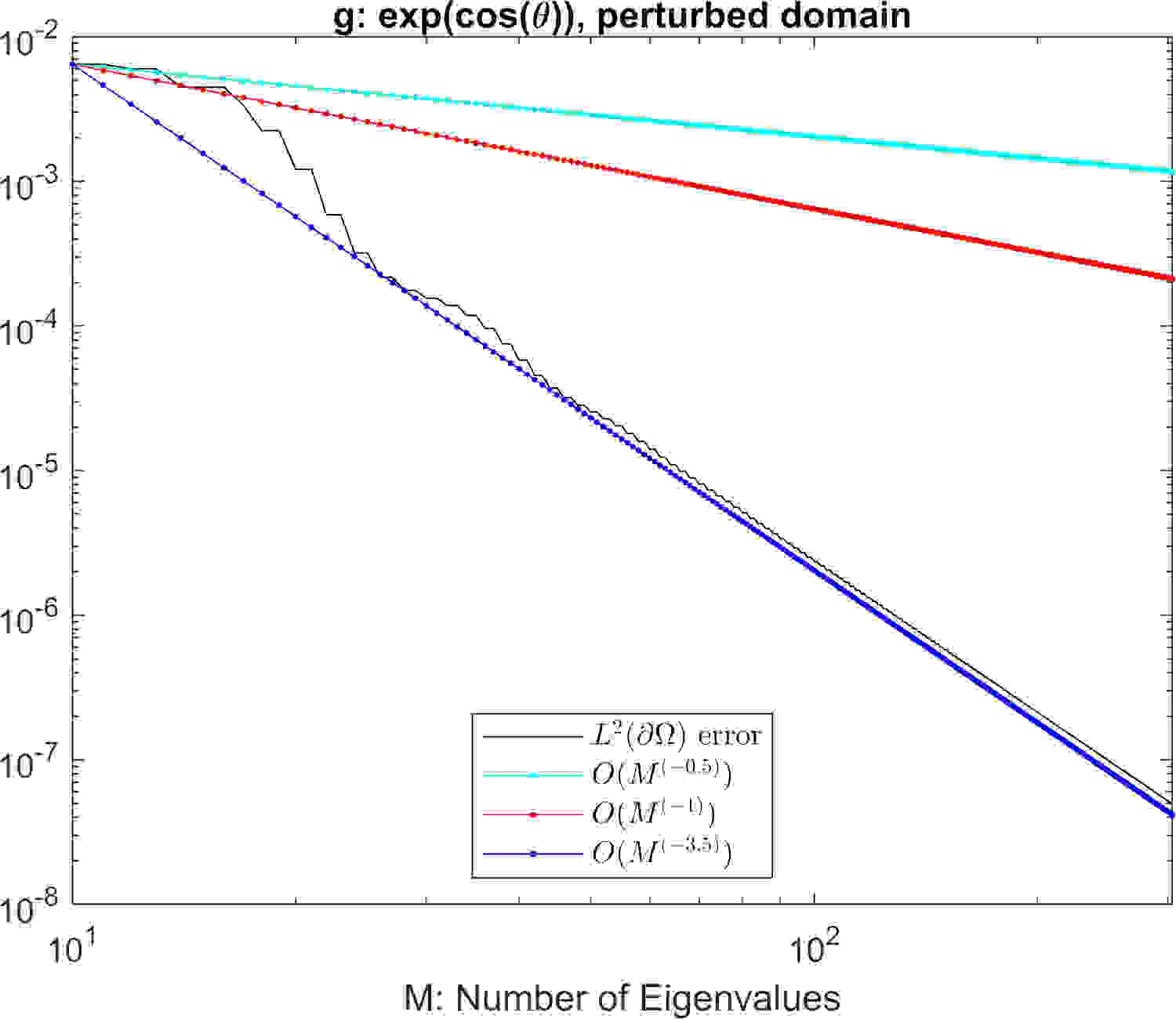}\hfill
    \includegraphics[width=.49\textwidth]{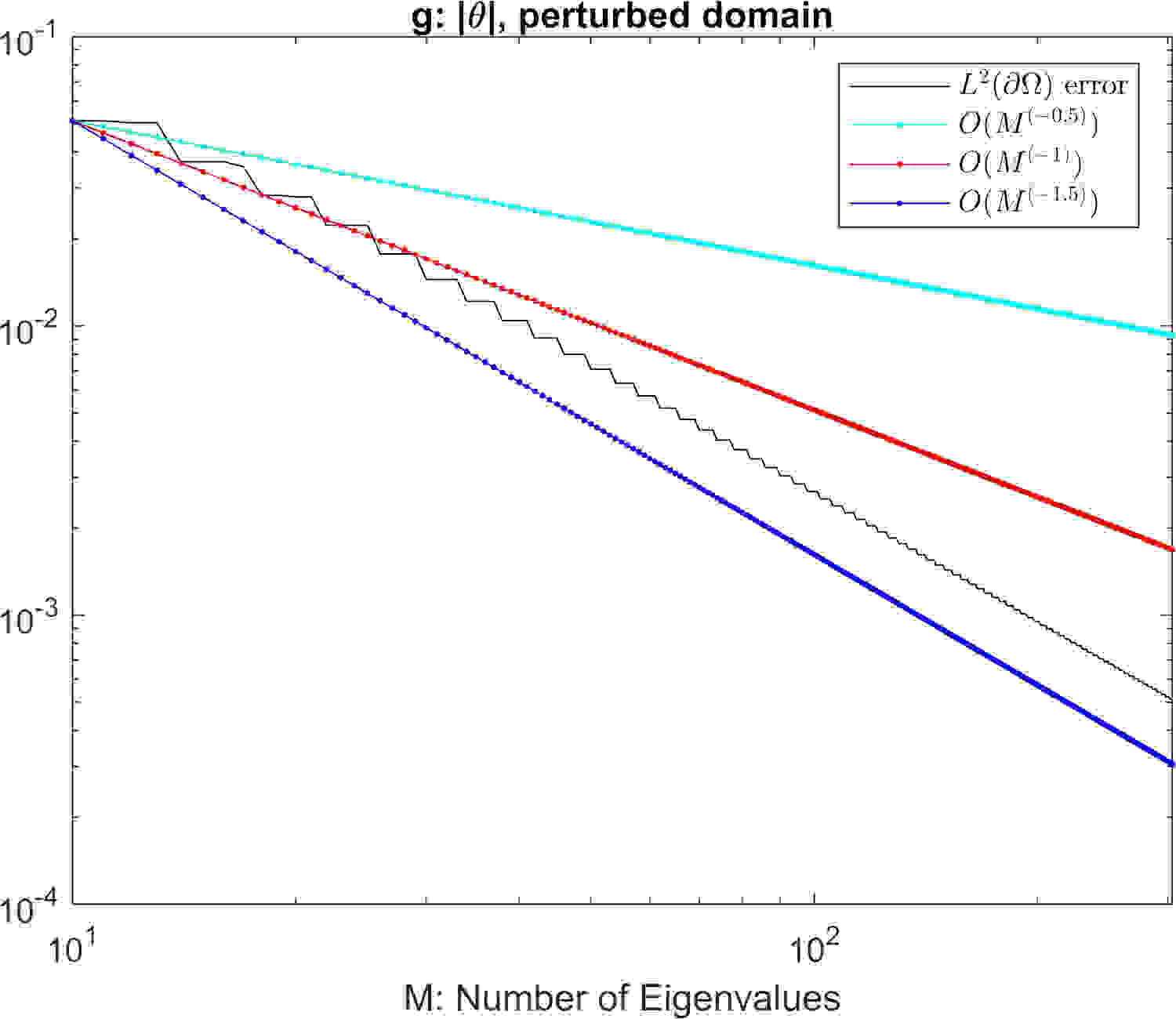}
    \\[\smallskipamount]
    \includegraphics[width=.49\textwidth]{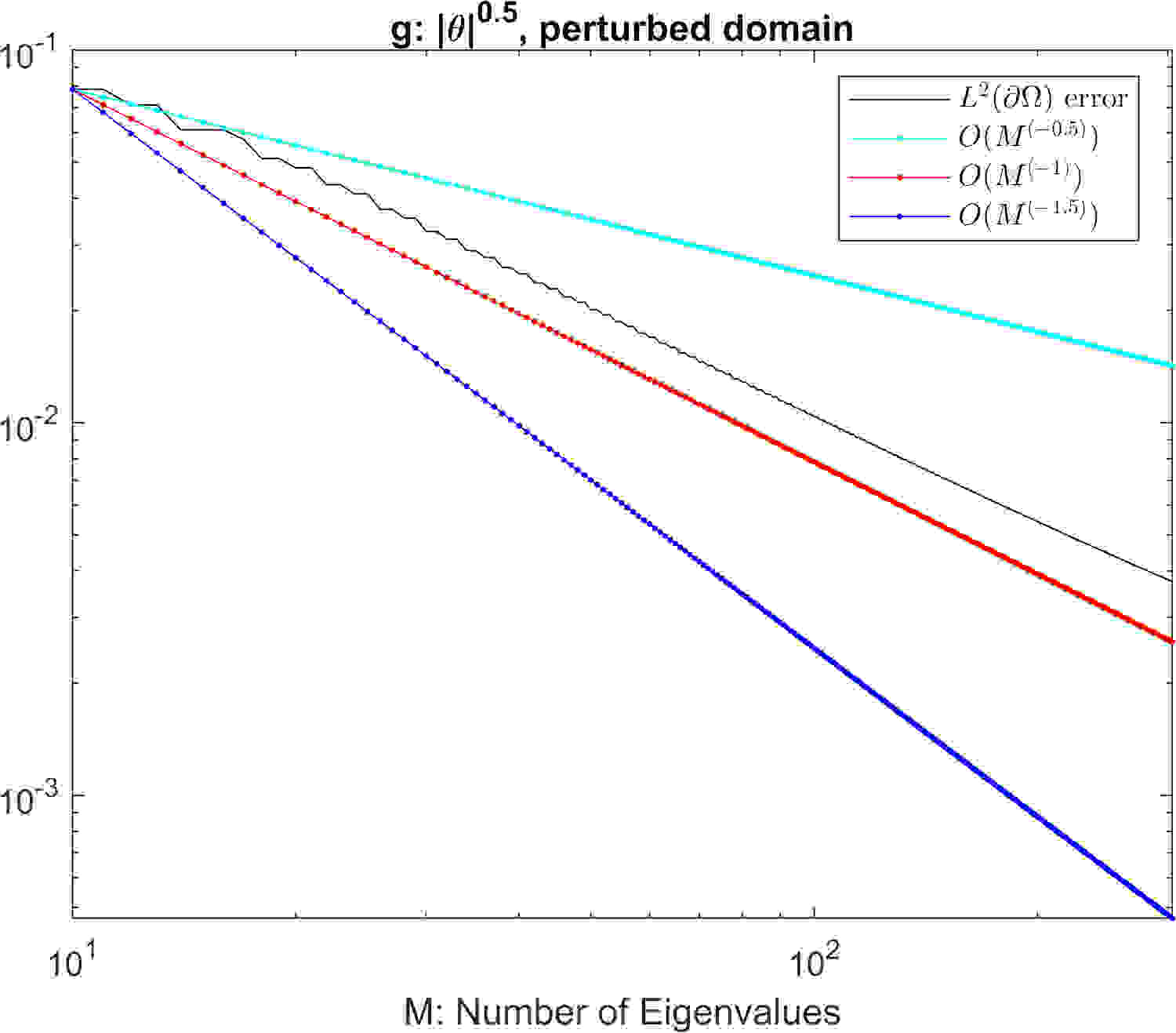}\hfill
    \includegraphics[width=.49\textwidth]{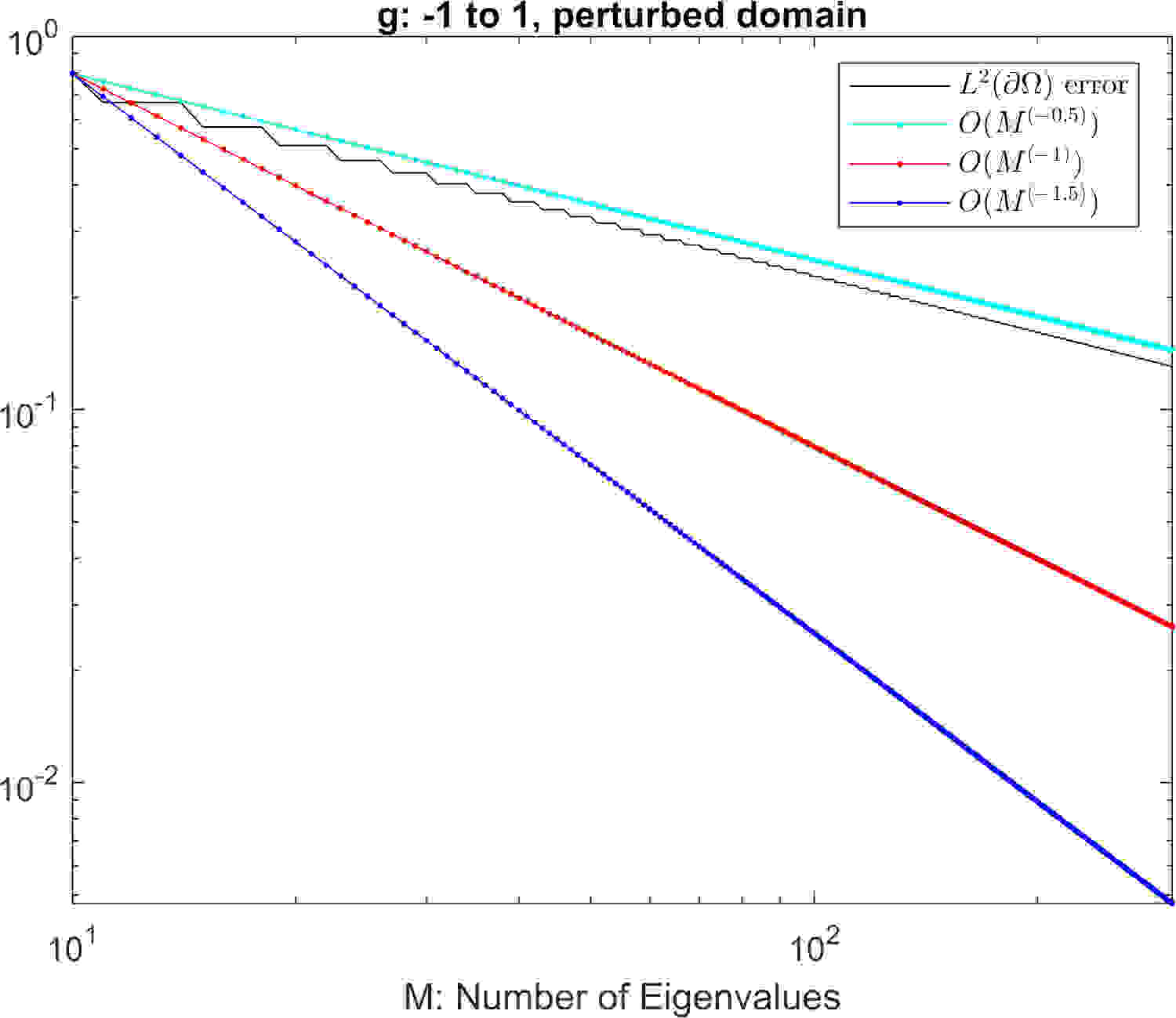}
    \caption{Error plots for the numerical approximation of the solution to the Dirichlet problem on the boundary to four different boundary functions on the perturbed circle, Fig \ref{fig:PerturbedDom}.}\label{fig:DiriErrorFigPertCircle}
\end{figure}

\begin{figure}
    \includegraphics[width=.49\textwidth]{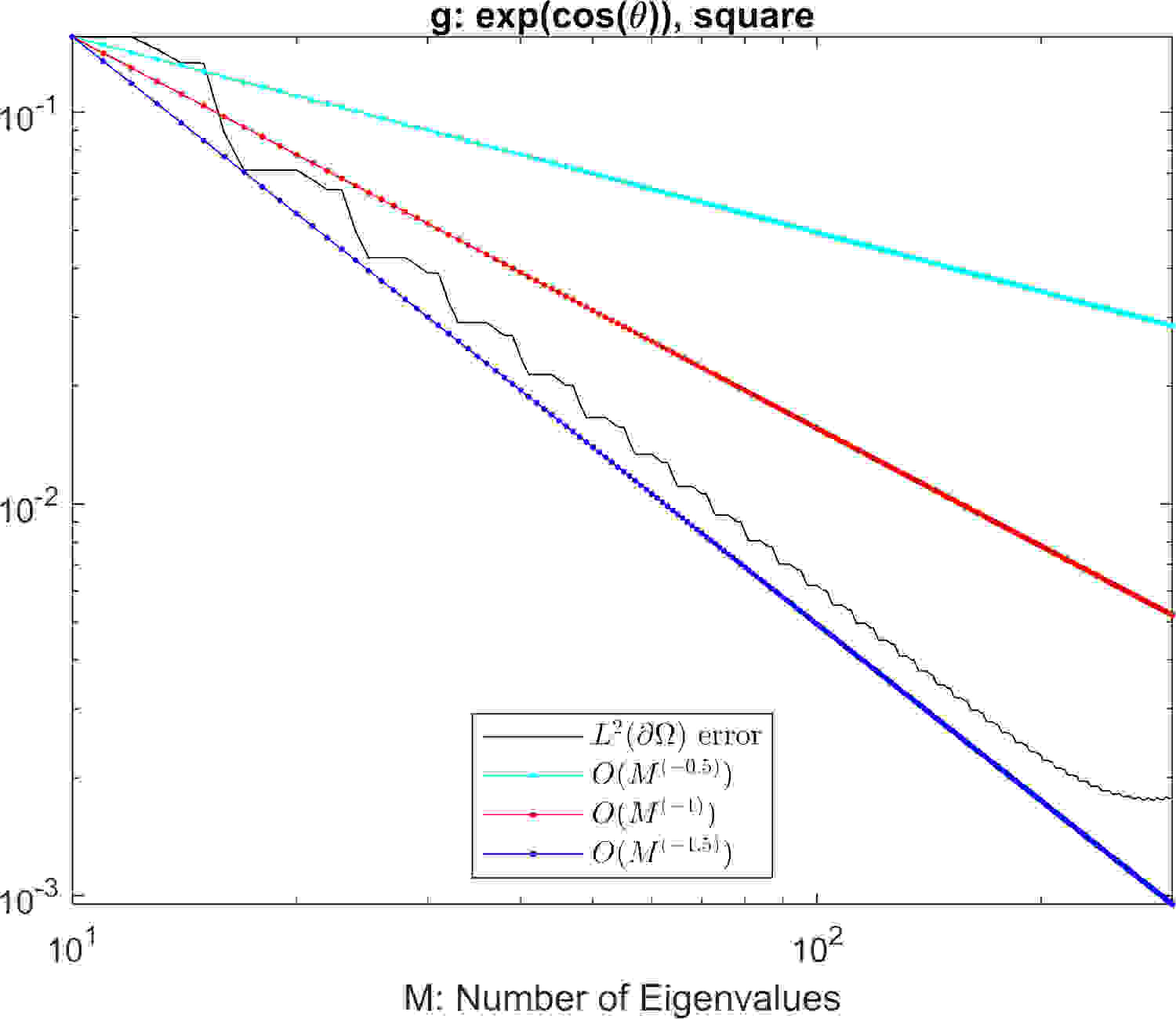}\hfill
    \includegraphics[width=.49\textwidth]{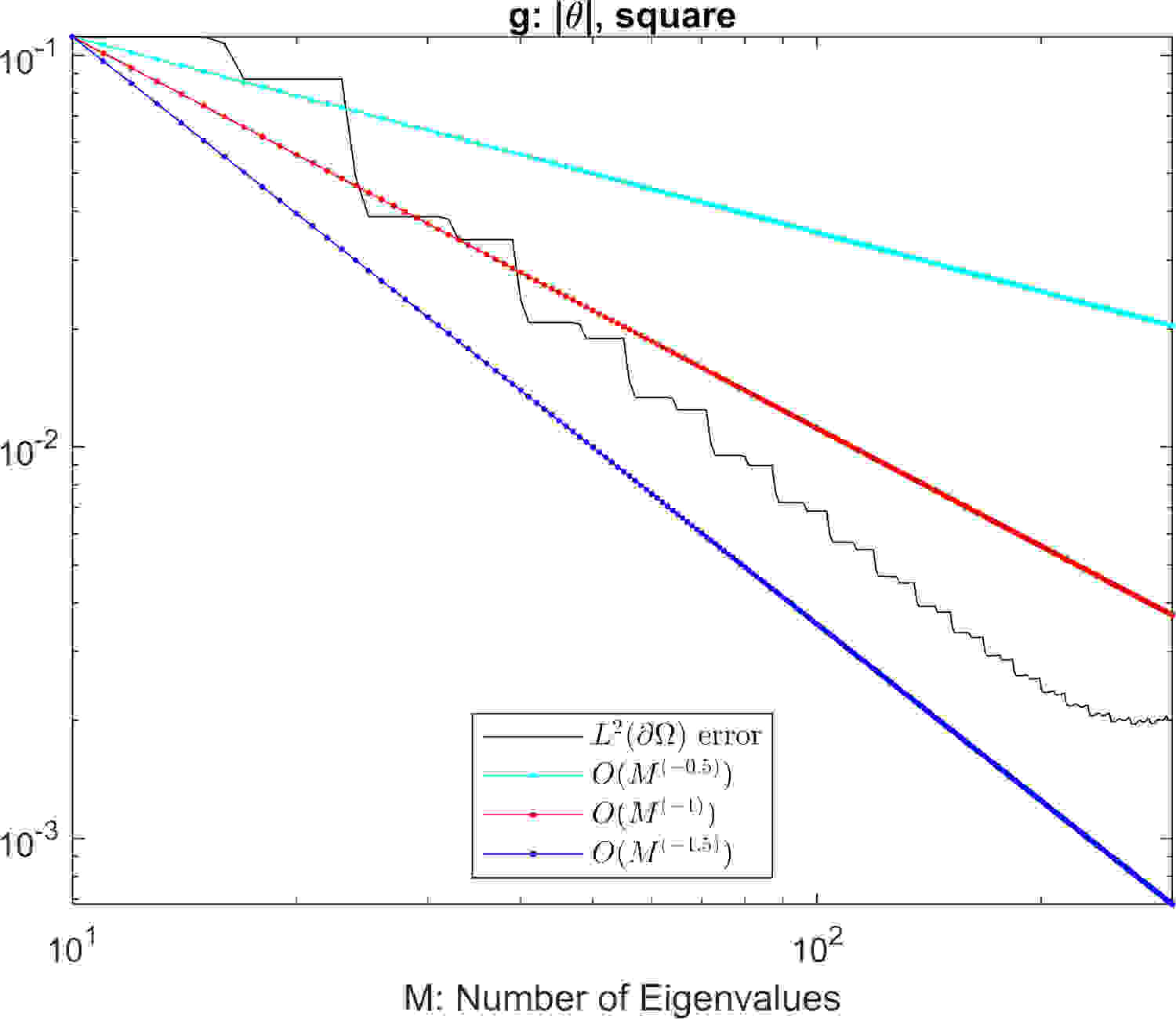}
    \\[\smallskipamount]
    \includegraphics[width=.49\textwidth]{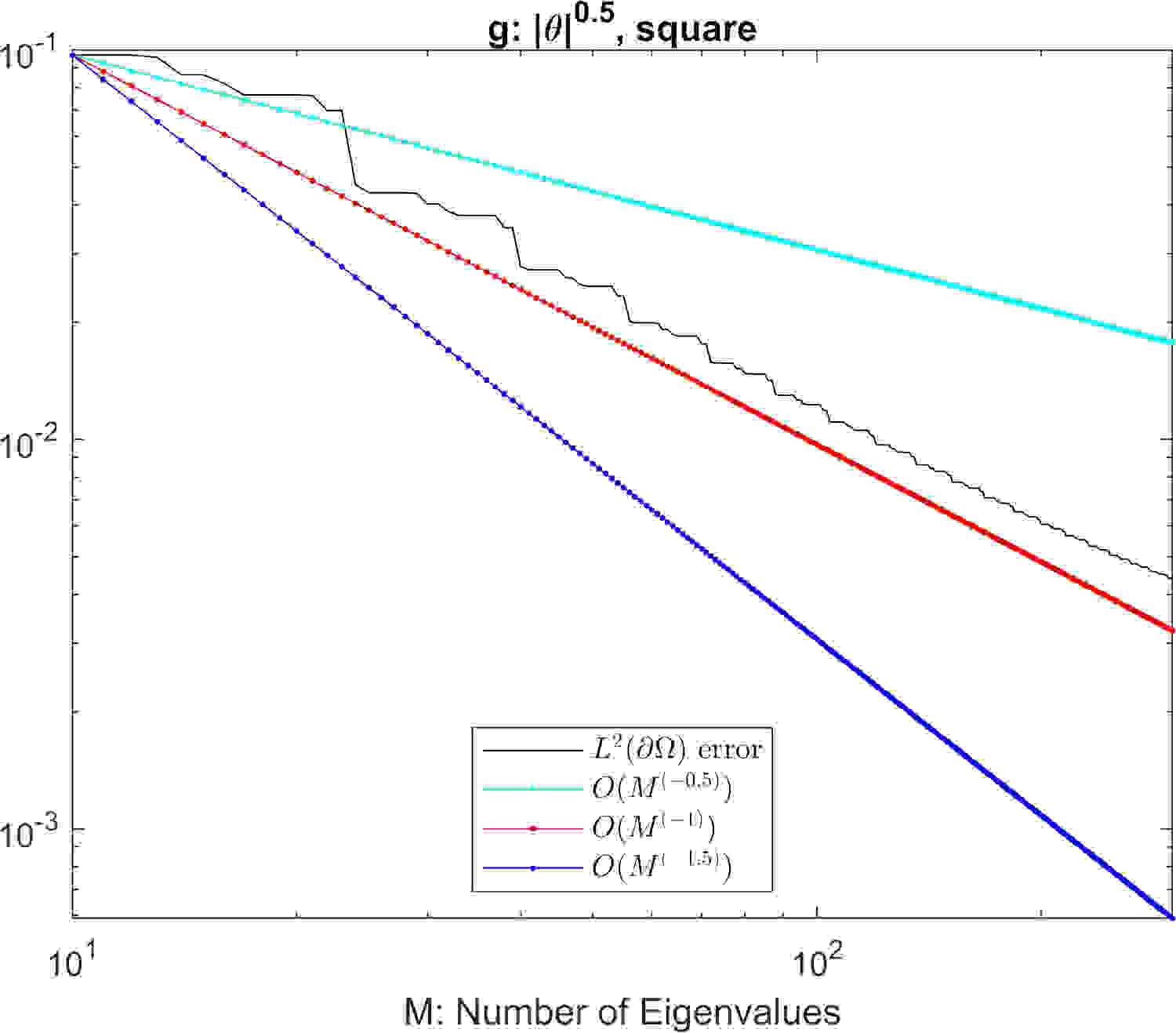}\hfill
    \includegraphics[width=.49\textwidth]{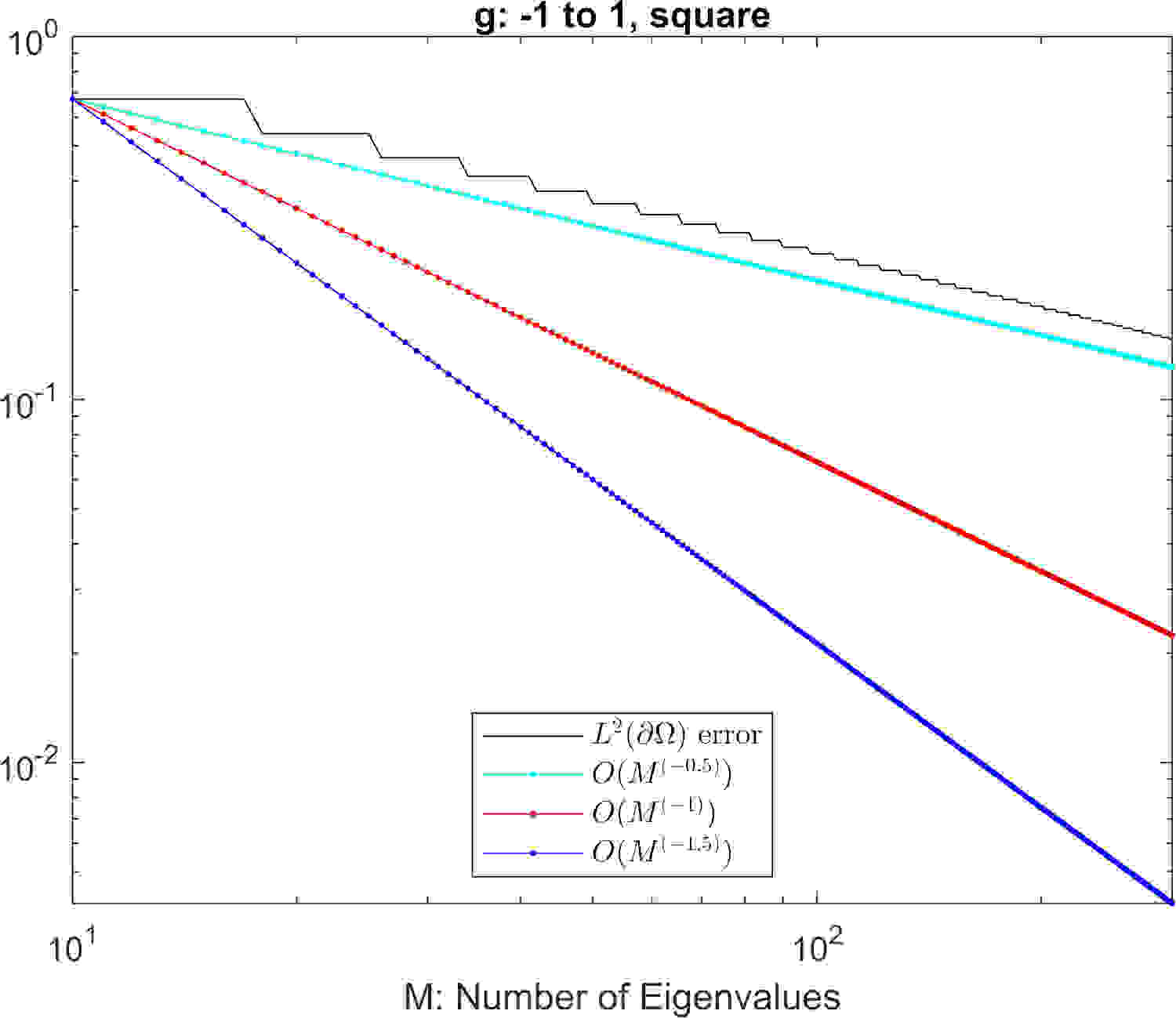}
    \caption{Error plots for the numerical approximation of the solution to the Dirichlet problem on the boundary to four different boundary functions on the square domain, Fig \ref{fig:squareDomain}.}\label{fig:DiriErrorFigSquare}
\end{figure}

\begin{figure}
    \includegraphics[width=.49\textwidth]{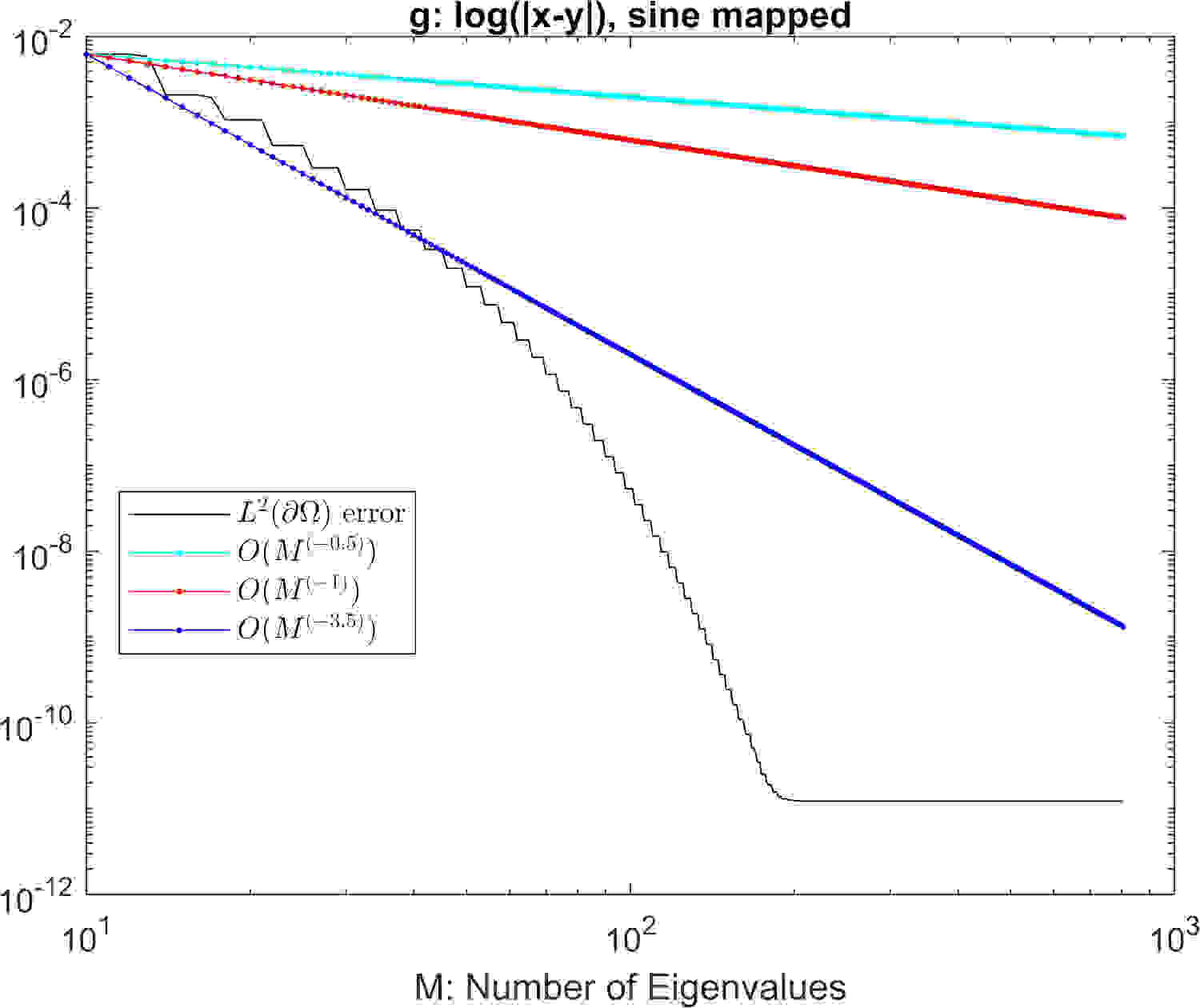}\hfill
    \includegraphics[width=.49\textwidth]{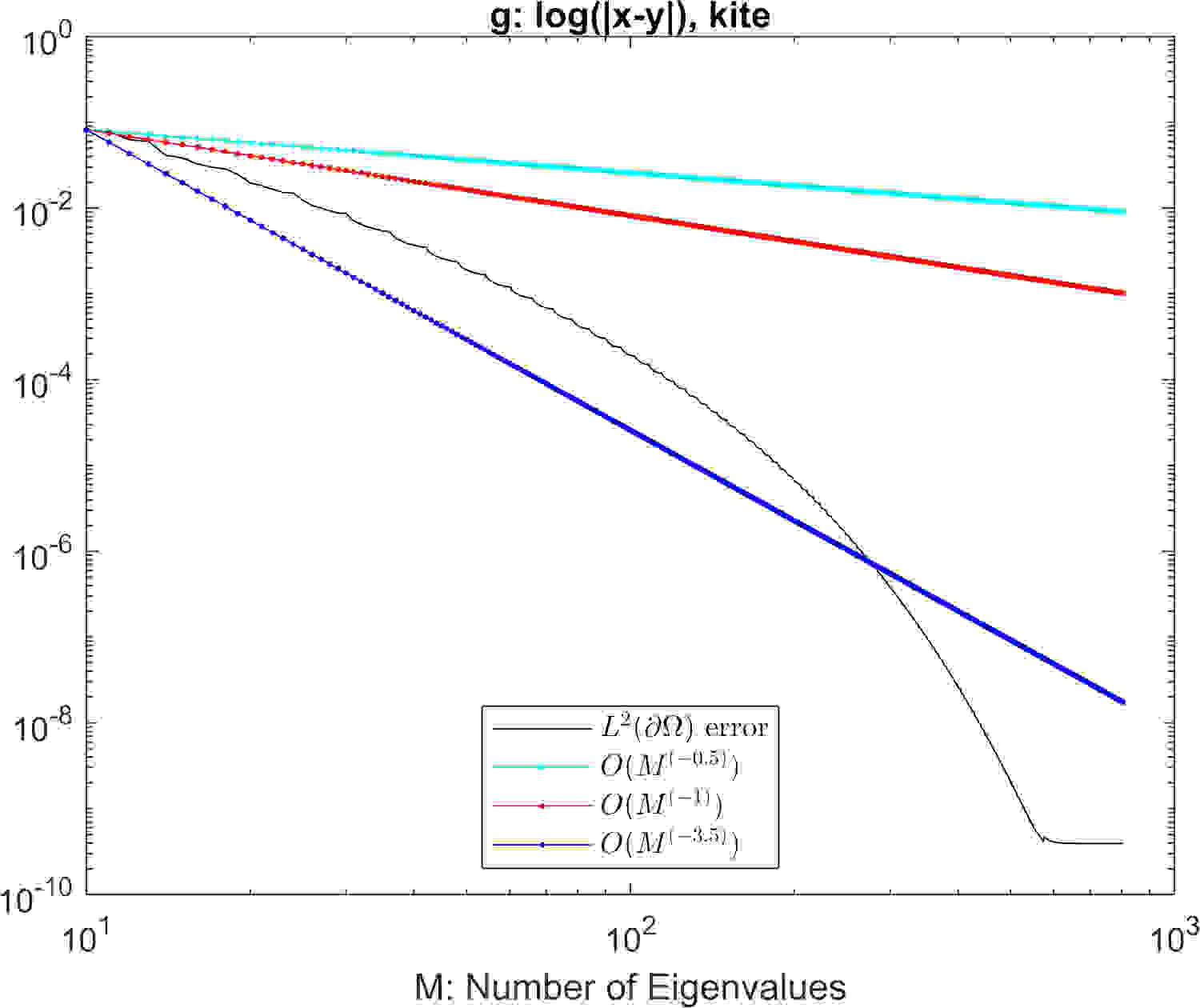}
    \\[\smallskipamount]
    \includegraphics[width=.49\textwidth]{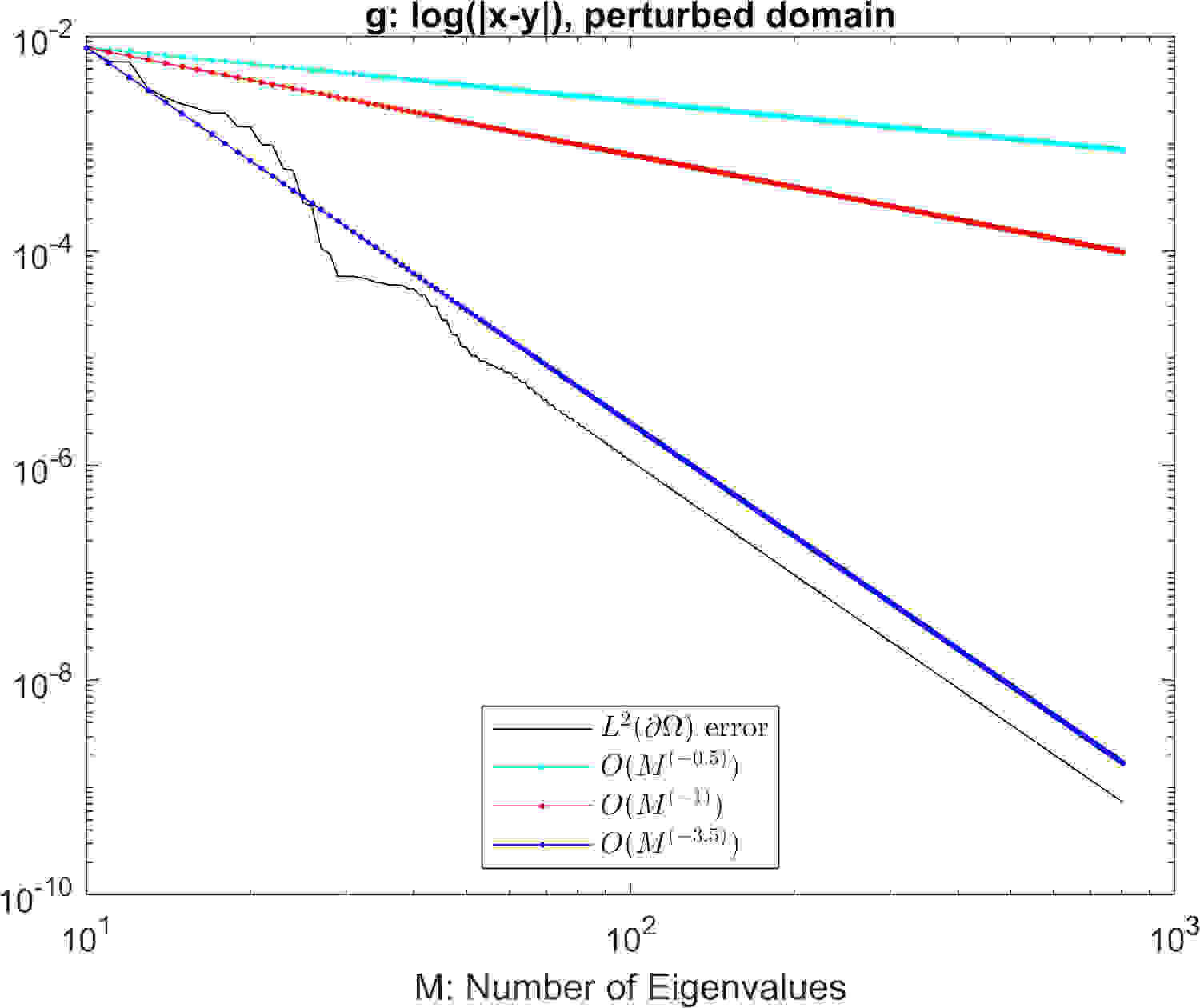}\hfill
    \includegraphics[width=.49\textwidth]{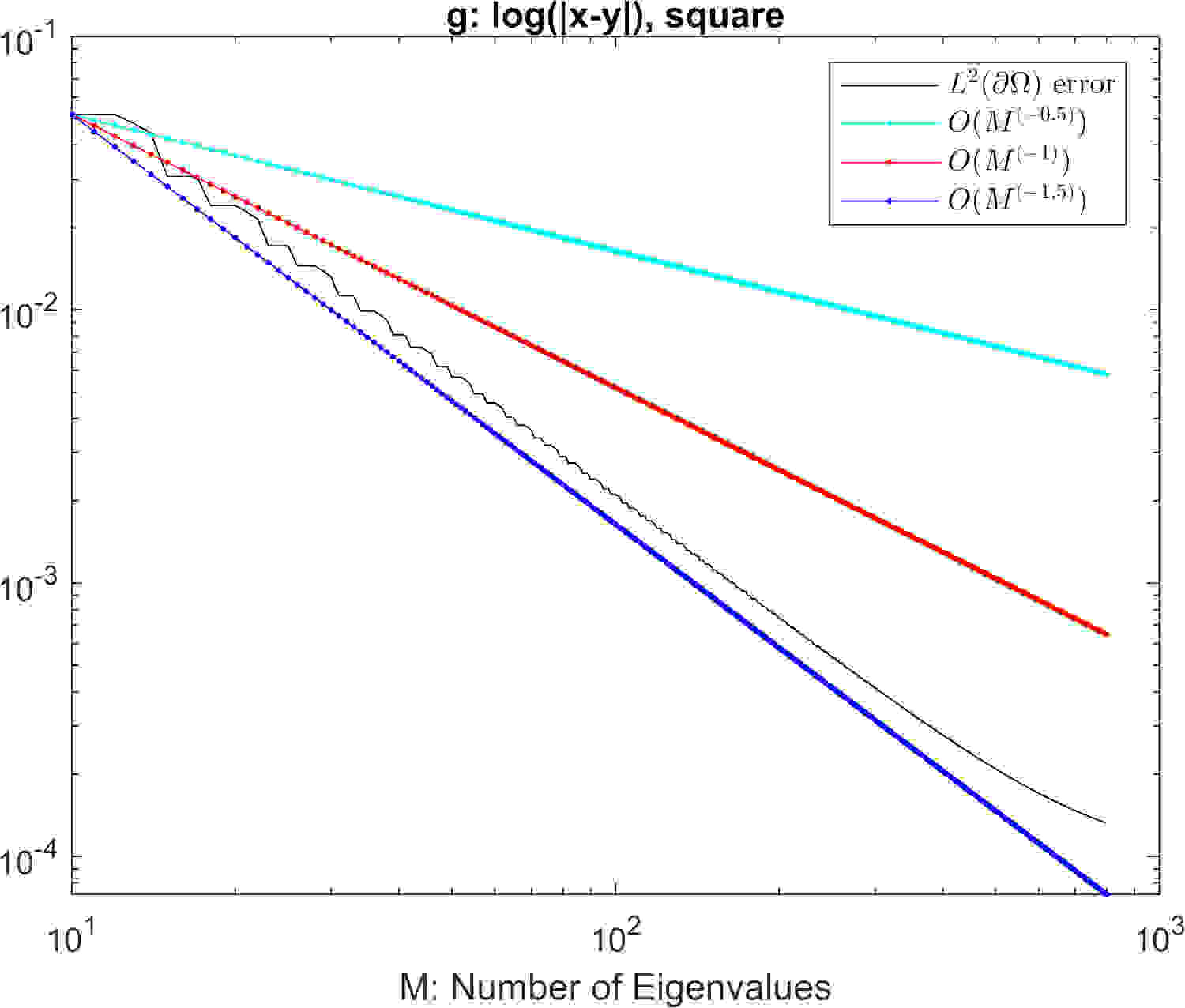}
    \caption{Error plots for the numerical approximation of the solution to the Robin boundary problem on the boundary to four different domains, the sine mapped domain (Fig. \ref{fig:sinDomain}), the kite shaped domain (Fig. \ref{fig:kiteDomain}), the perturbed circle domain (Fig. \ref{fig:PerturbedDom}) and the square (Fig. \ref{fig:squareDomain}).}\label{fig:RobinErrorFig}
\end{figure}

\begin{figure}
    \includegraphics[width=.49\textwidth]{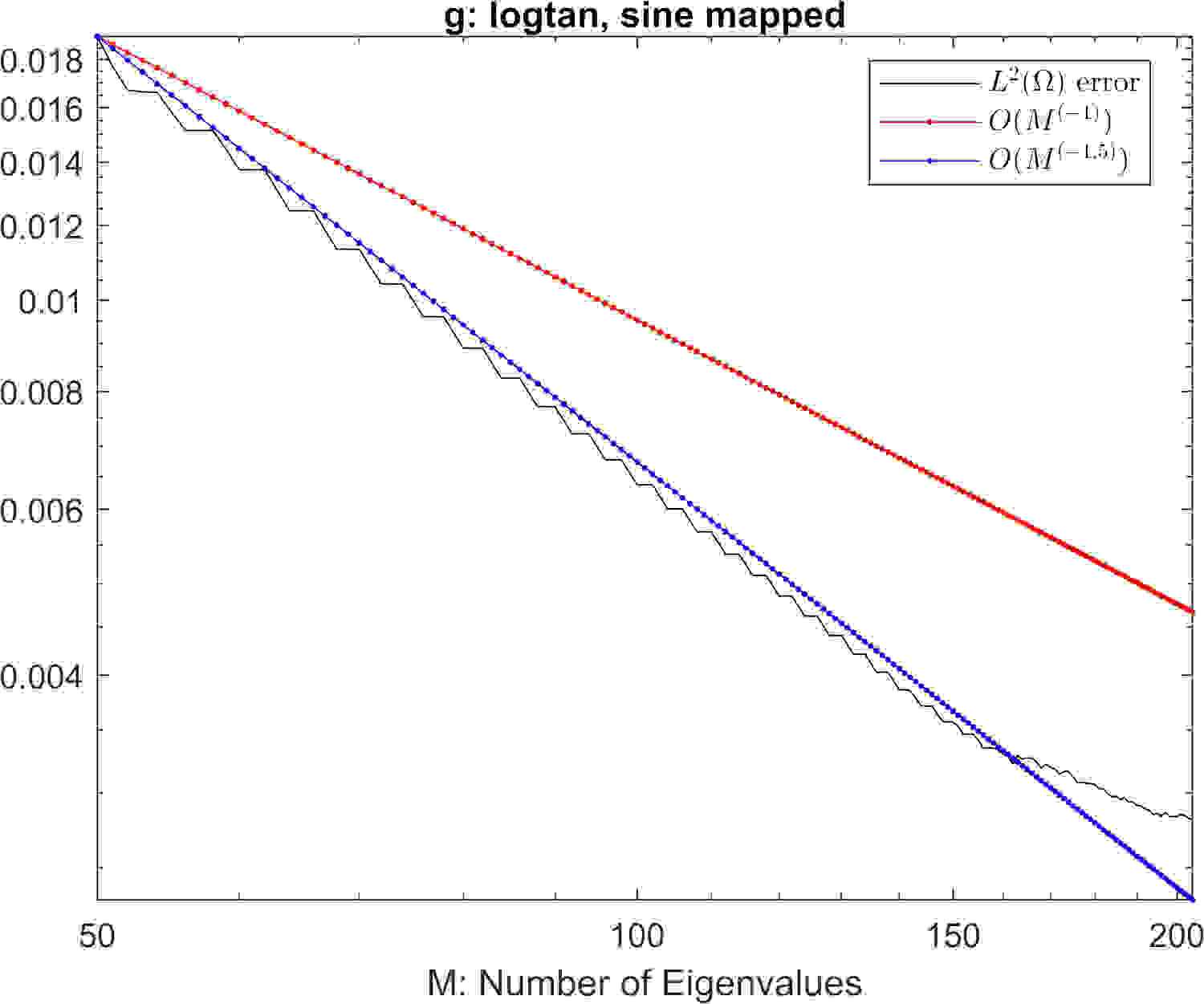}\hfill
    \includegraphics[width=.49\textwidth]{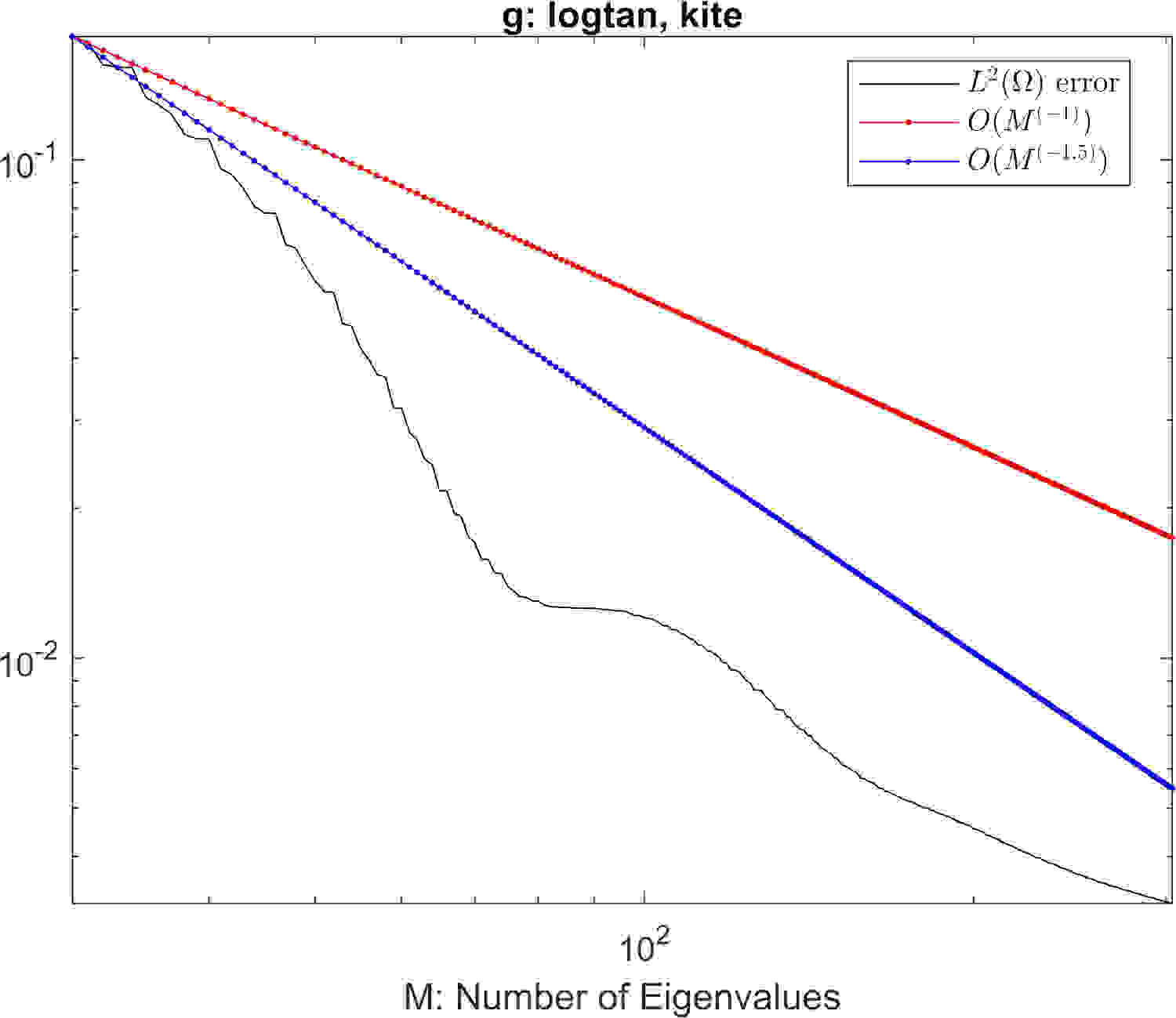}
    \\[\smallskipamount]
    \includegraphics[width=.49\textwidth]{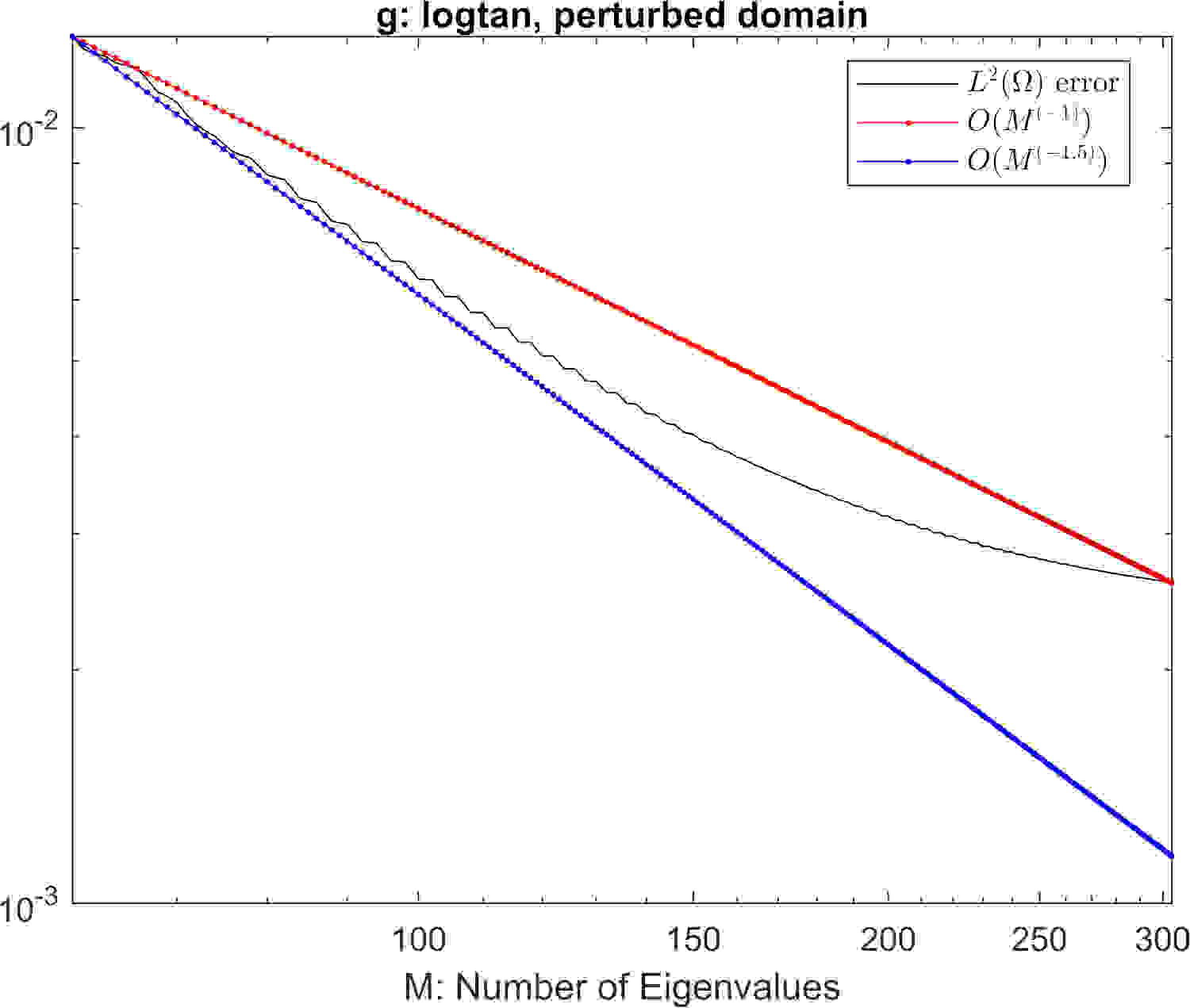}\hfill
    \caption{Error plots for the numerical approximation of the solution to the Robin boundary problem on the boundary to four different domains, the sine mapped domain (Fig. \ref{fig:sinDomain}), the kite shaped domain (Fig. \ref{fig:kiteDomain}), the perturbed circle domain (Fig. \ref{fig:PerturbedDom}) and the square (Fig. \ref{fig:squareDomain}) with a non-smooth boundary.}\label{fig:RobinErrorFigNonSmoothData}
\end{figure}

\begin{figure}
    \includegraphics[width=.49\textwidth]{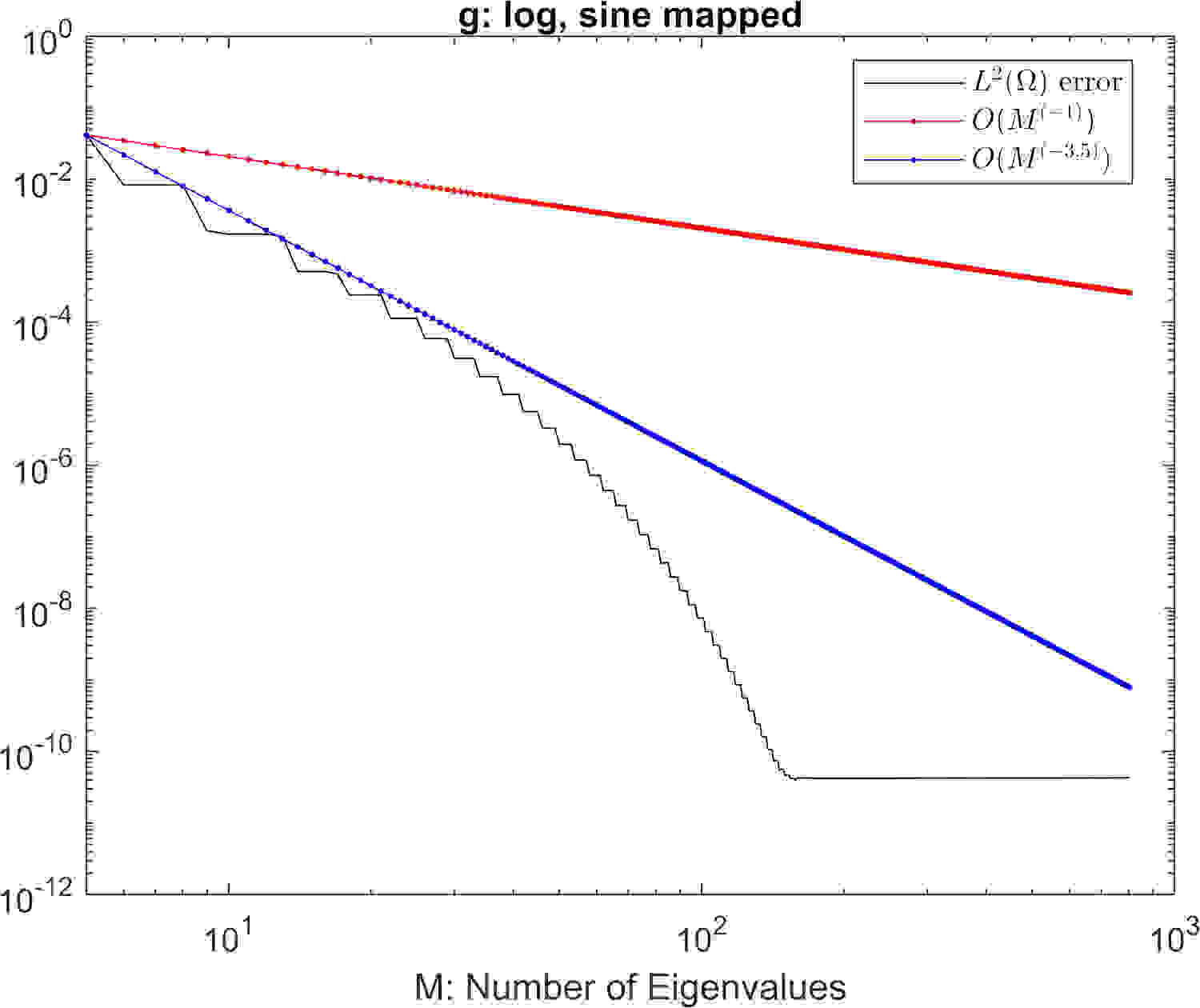}\hfill
    \includegraphics[width=.49\textwidth]{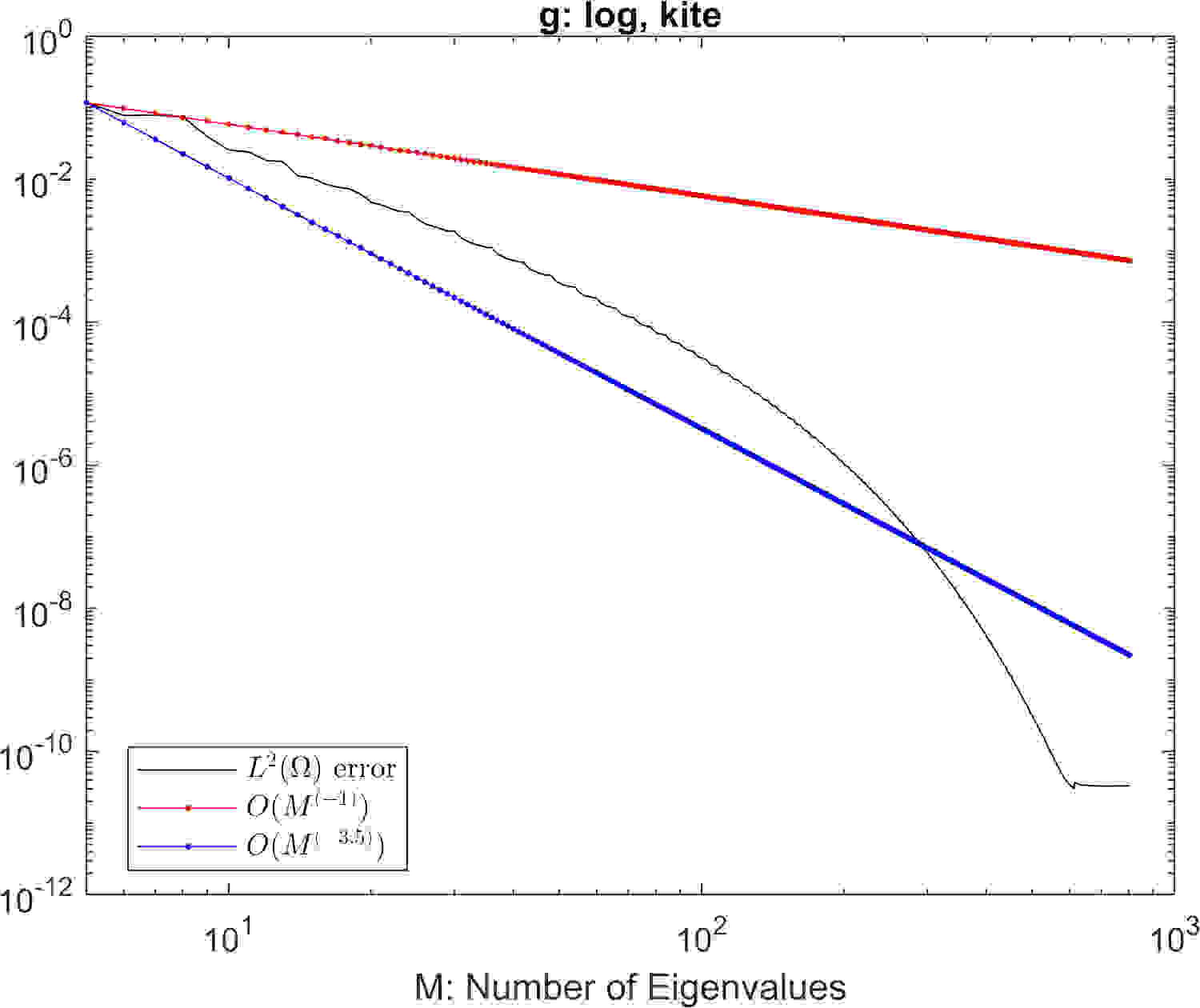}
    \\[\smallskipamount]
    \includegraphics[width=.49\textwidth]{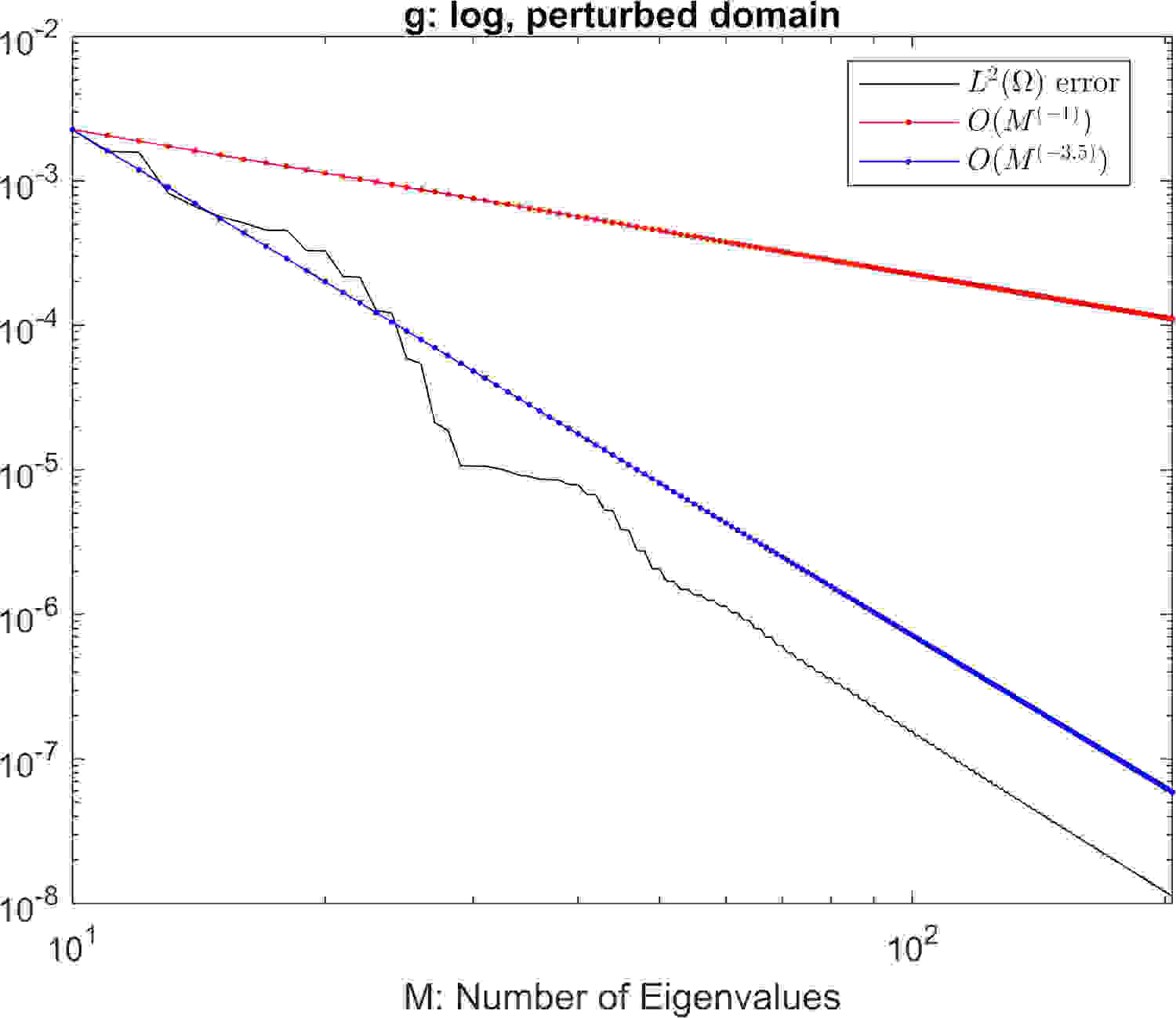}\hfill
    \includegraphics[width=.49\textwidth]{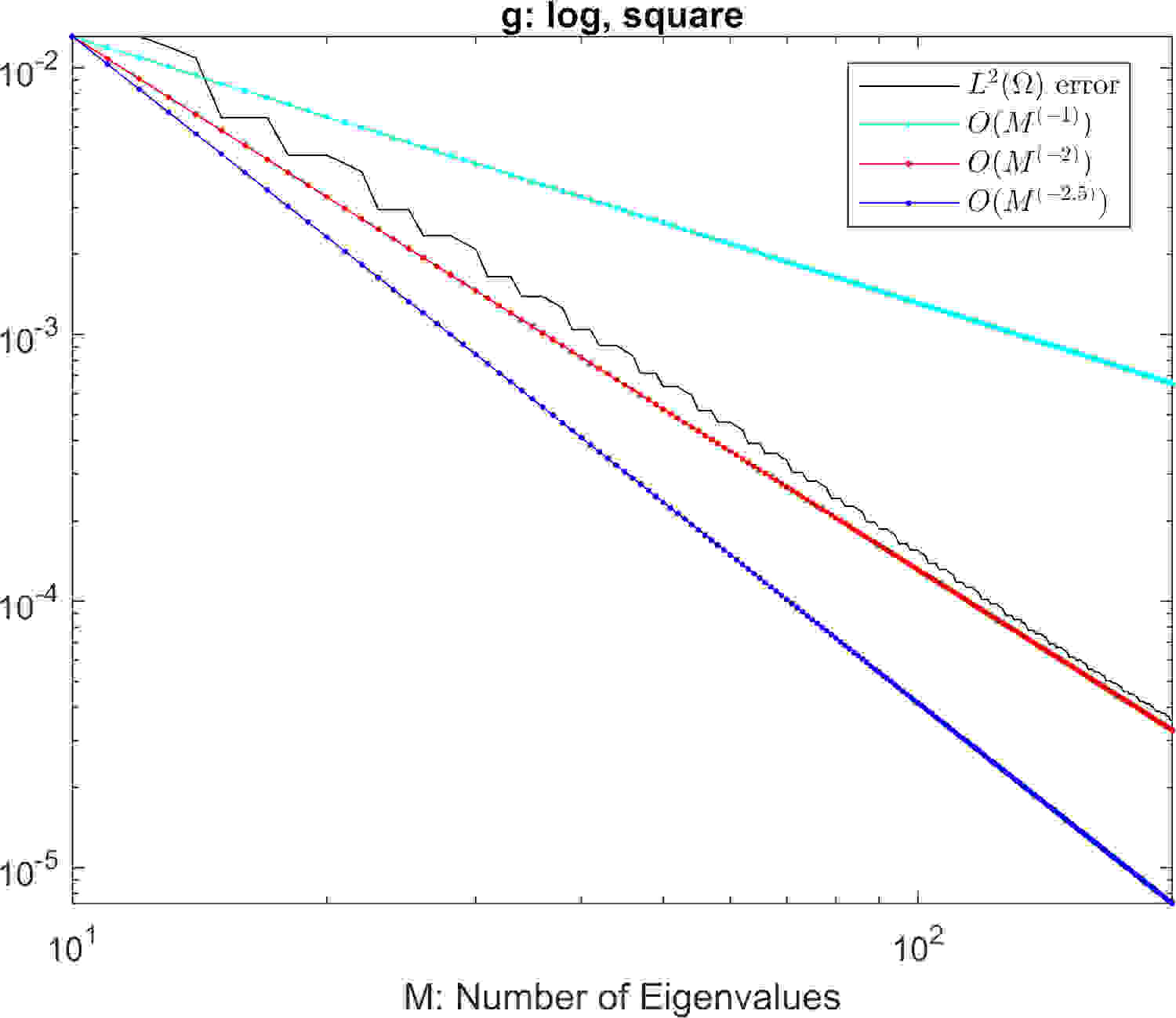}
    \caption{Error plots for the numerical approximation of the solution to the Dirichlet boundary problem on the domain to four different domains, the sine mapped domain (Fig. \ref{fig:sinDomain}), the kite shaped domain (Fig. \ref{fig:kiteDomain}), the perturbed circle domain (Fig. \ref{fig:PerturbedDom}) and the square (Fig. \ref{fig:squareDomain}).}\label{fig:DomDiriErrorFig}
\end{figure}

\begin{figure}
    \includegraphics[width=.49\textwidth]{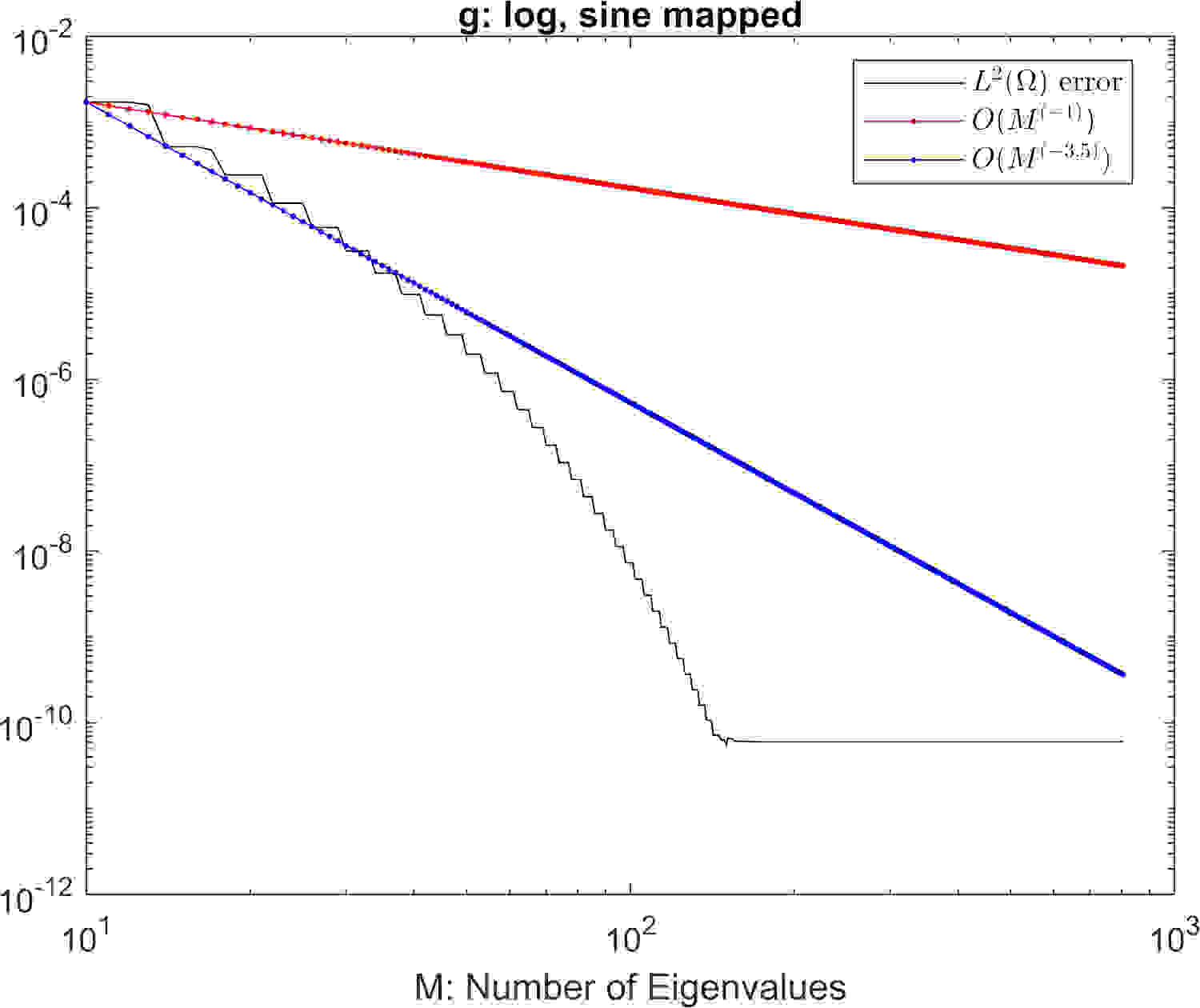}\hfill
    \includegraphics[width=.49\textwidth]{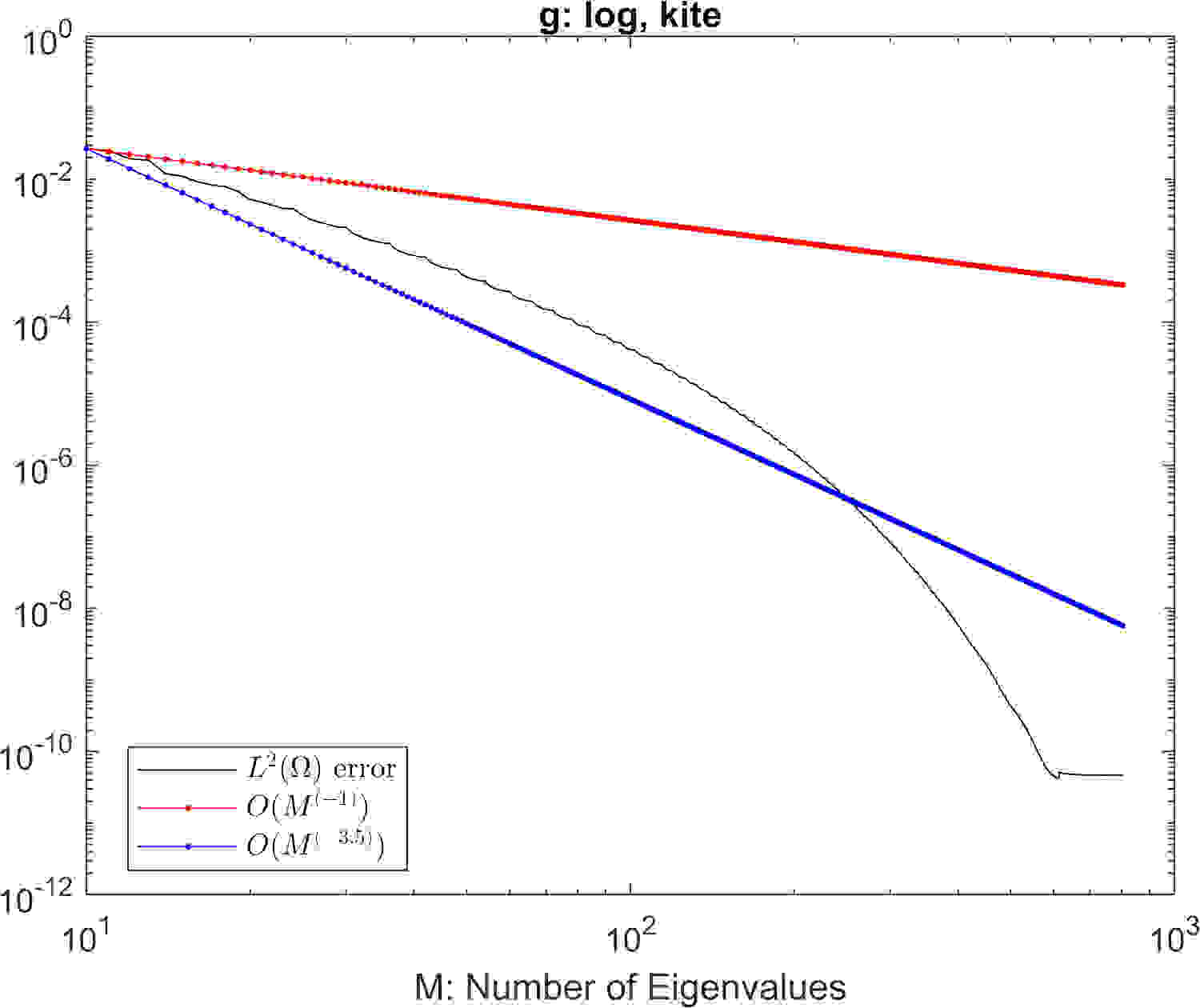}
    \\[\smallskipamount]
    \includegraphics[width=.49\textwidth]{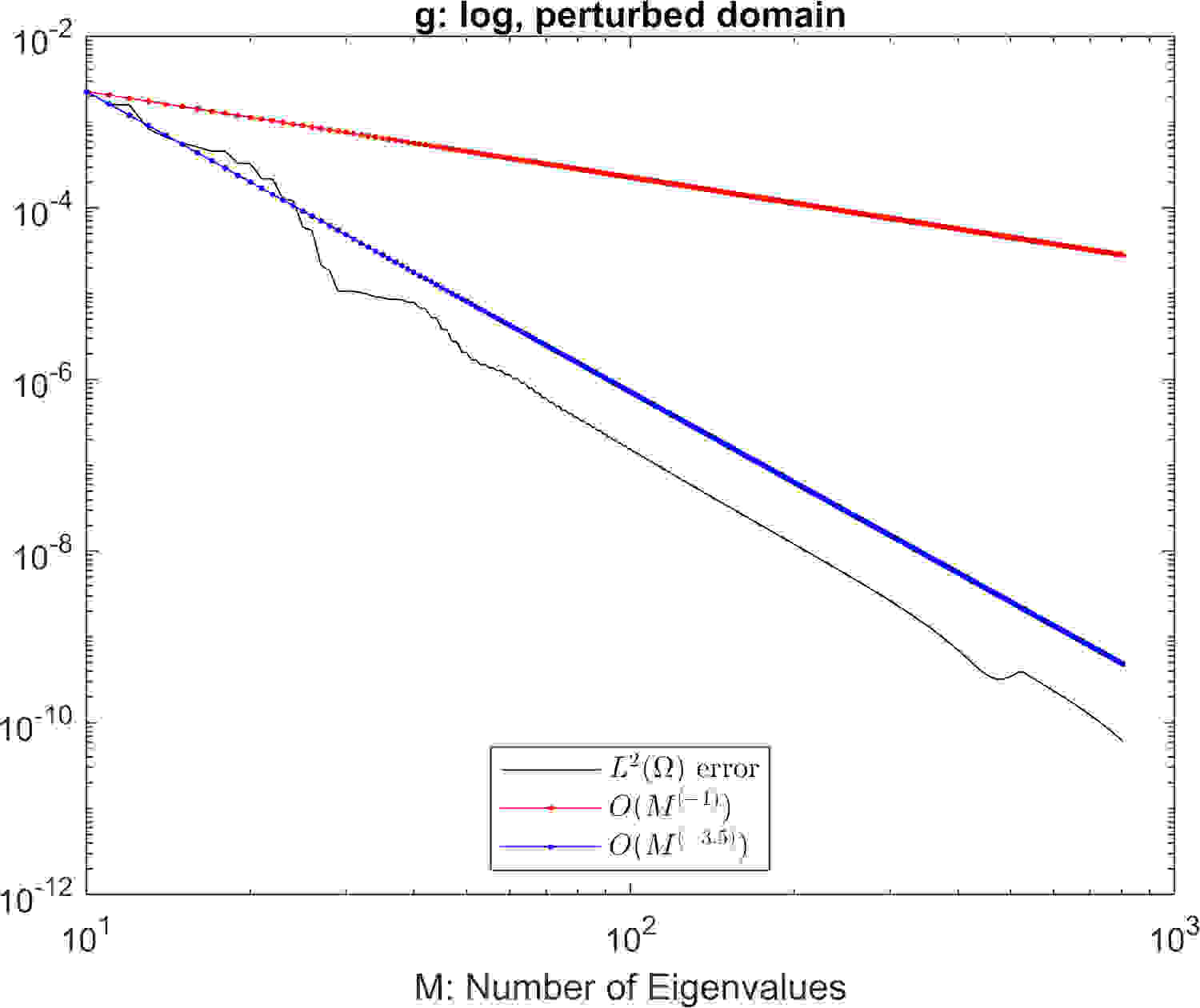}\hfill
    \includegraphics[width=.49\textwidth]{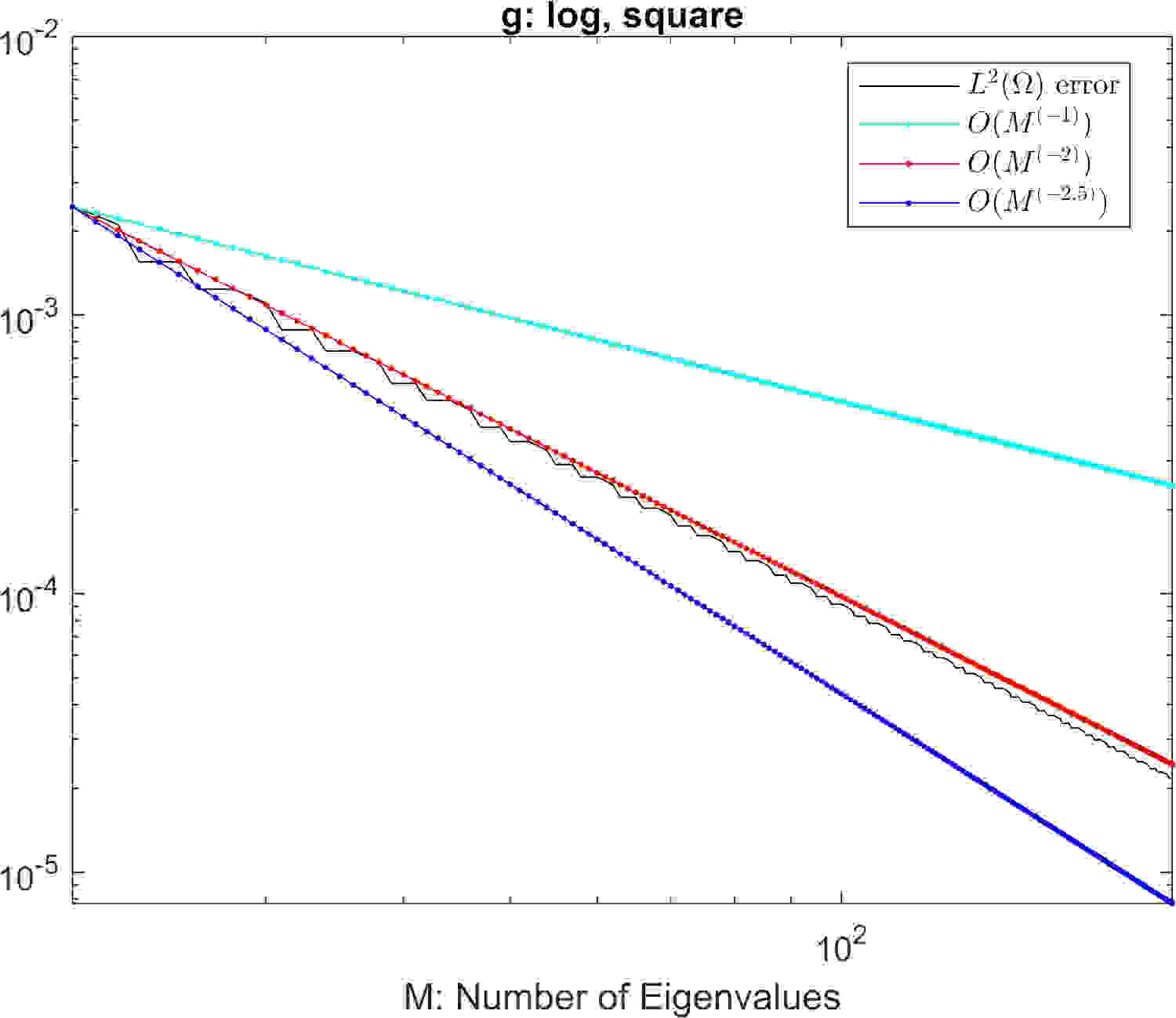}
    \caption{Error plots for the numerical approximation of the solution to the Robin boundary problem on the domain to four different domains, the sine mapped domain (Fig. \ref{fig:sinDomain}), the kite shaped domain (Fig. \ref{fig:kiteDomain}), the perturbed circle domain (Fig. \ref{fig:PerturbedDom}) and the square (Fig. \ref{fig:squareDomain}).}\label{fig:DomRobinErrorFig}
\end{figure}
\begin{figure}
    \includegraphics[width=.49\textwidth]{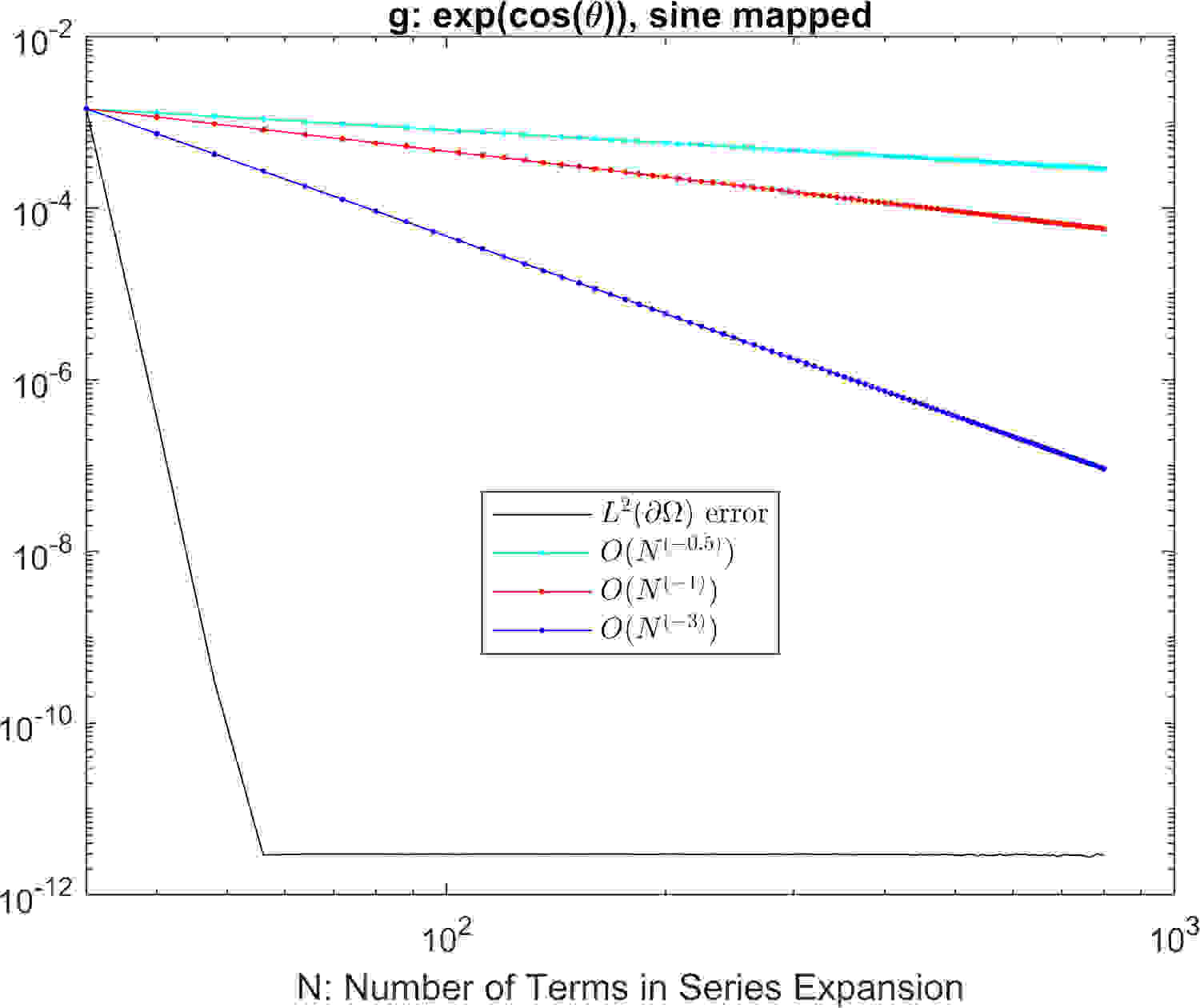}\hfill
    \includegraphics[width=.49\textwidth]{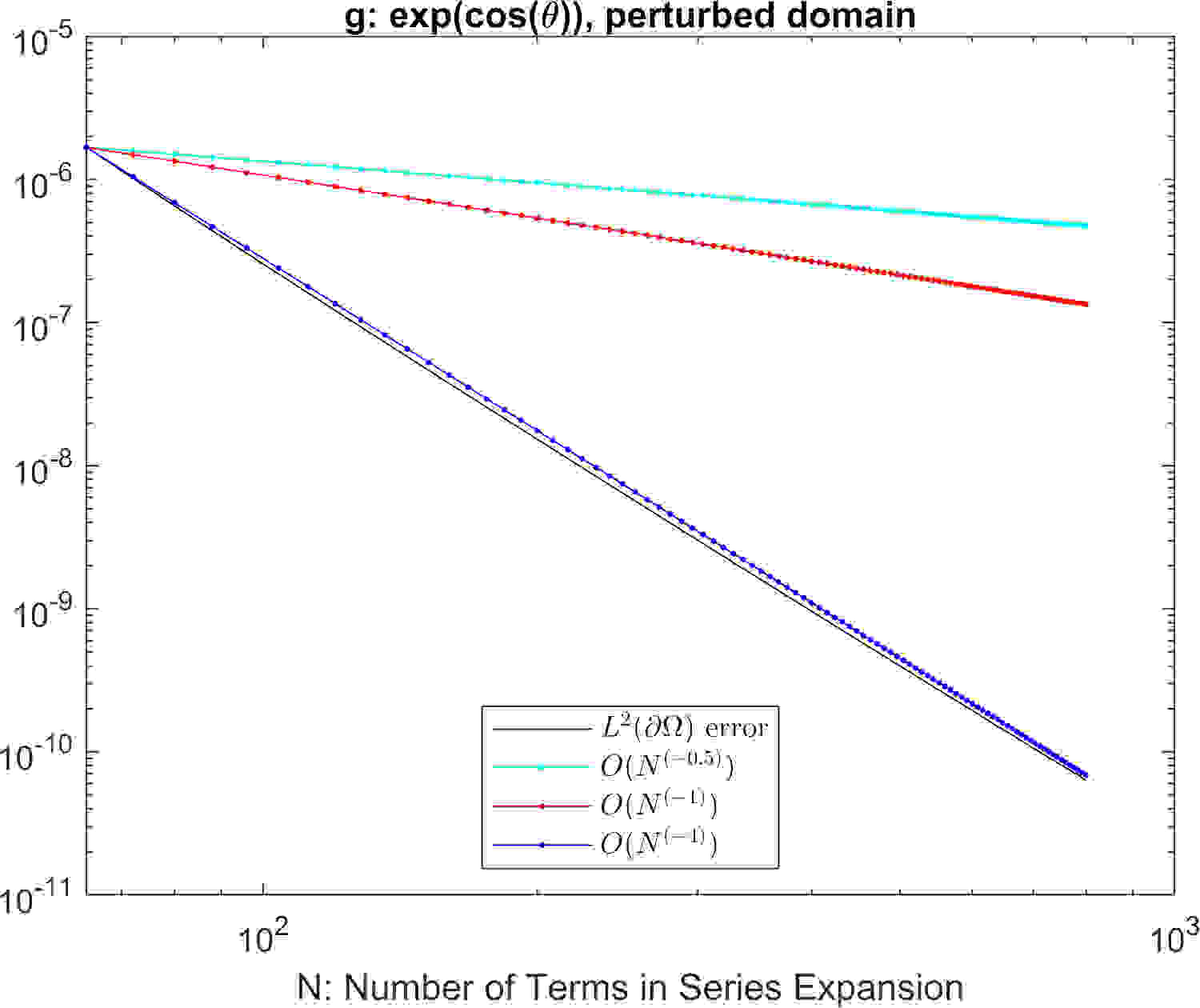}
    \\[\smallskipamount]
    \includegraphics[width=.49\textwidth]{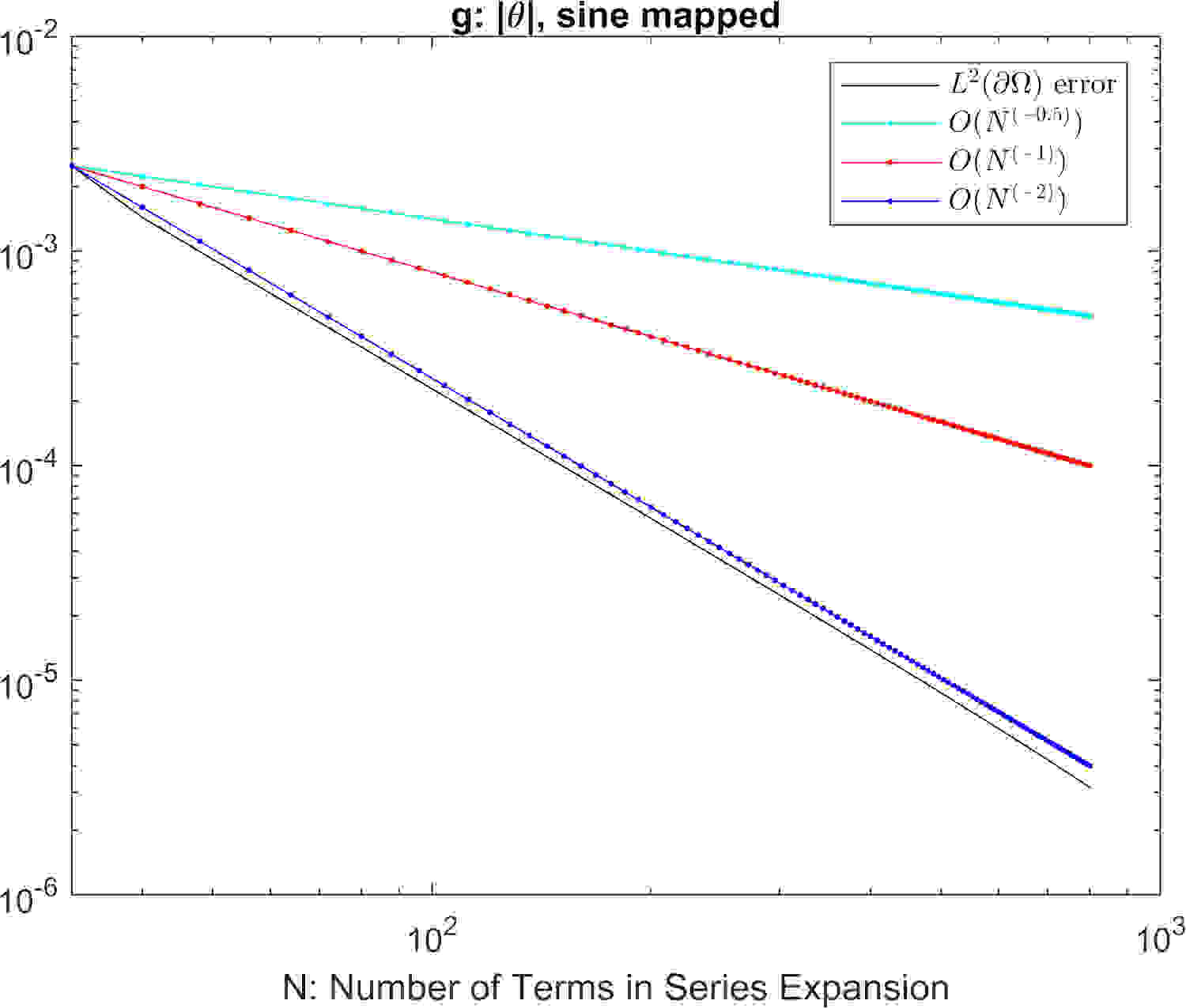}\hfill
    \includegraphics[width=.49\textwidth]{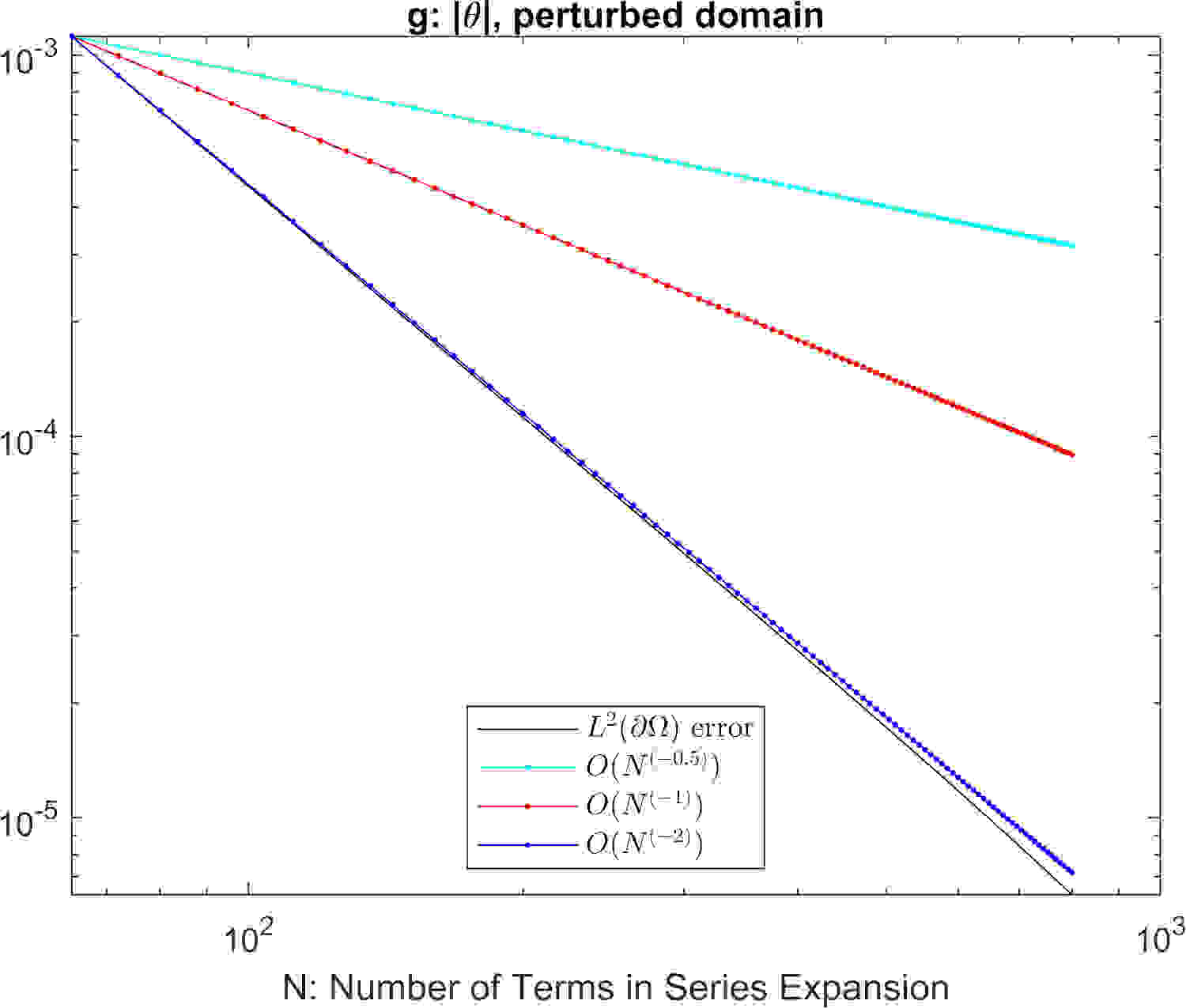}
    \caption{Error plots for the numerical approximation of the solution to the Dirichlet problem on the boundary in $N$, the number of terms in the series expansion. The top left figure describes the function $g(\theta) = \exp(\cos(\theta))$ on the sine-mapped domain, Fig. (\ref{fig:sinDomain}). The top right figure describes the function $g(\theta) = \exp(\cos(\theta))$ on the perturbed circl domain, Fig. (\ref{fig:PerturbedDom}). The bottom left figure describes the function $g(\theta) = |\theta-\pi|$ on the sine-mapped domain, Fig. (\ref{fig:sinDomain}). The bottom right figure describes the function $g(\theta) = |\theta-\pi|$ on the perturbed circle domain, Fig. (\ref{fig:PerturbedDom}).}\label{fig:NErrorDiriBdry}
\end{figure}

\pagebreak


\bibliographystyle{plain}
\bibliography{refs}

\end{document}